\numberwithin{equation}{section}
\newcommand{\ds}{\displaystyle}
\newcommand{\Nb}{{\mathbb{N}}}
\newcommand{\Rb}{{\mathbb{R}}}
\newcommand{\Zb}{{\mathbb{Z}}}
\newcommand{\A}{{\mathcal{A}}}
\newcommand{\R}{{\mathcal{R}}}
\newcommand{\C}{{\mathcal{C}}}
\newcommand{\E}{{\mathcal{E}}}
\newcommand{\LL}{{\mathcal{L}}}
\newcommand{\HH}{{\mathcal{H}}}
\newcommand{\M}{{\mathcal{M}}}
\def\dist{\text{dist}}
\def\rightharpoonupfill@{\arrowfill@\relbar\relbar\rightharpoonup}
\newcommand{\xrightharpoonup}[2][]{\ext@arrow
0359\rightharpoonupfill@{#1}{#2}} \makeatother
\newcommand{\res}{\mathop{\hbox{\vrule height 7pt width .5pt depth 0pt
\vrule height .5pt width 6pt depth 0pt}}\nolimits}
\let\G=\Gamma
\let\a=\alpha
\let\b=\beta
\let\d=\delta
\let\g=\gamma
\def\e{{\varepsilon}}
\def\O{{\Omega}}
\def\o{{\omega}}
\def\x{{\times}}
\def\ovb{\overline{b}}
\newtheorem{Theorem}{Theorem}[section]
\newtheorem{Lemma}[Theorem]{Lemma}
\newtheorem{Proposition}[Theorem]{Proposition}
\newtheorem{Remark}[Theorem]{Remark}
\newtheorem{Example}[Theorem]{Example}
\newcommand\keywordsname{Key words}
\newcommand\AMSname{AMS subject classifications}
\begin{document}

\author{Jean-Fran\c cois Babadjian\footnote{Laboratoire Jean Kuntzmann, Universit\'e Joseph
Fourier, BP 53, 38041 Grenoble Cedex 9 (FRANCE).\newline Email:
\texttt{babadjia@imag.fr}}\and Elvira Zappale\footnote{DIIMA,
Universit\`a degli Studi di Salerno, via Ponte Don Melillo, 84084
Fisciano (SA) (ITALY). \newline Email:
\texttt{zappale@diima.unisa.it}} \and Hamdi
Zorgati\footnote{D\'epartement de Math\'ematiques, Campus
Universitaire, Universit\'e Tunis El Manar 2092 (TUNISIA). \newline
Email: \texttt{hamdi.zorgati@fst.rnu.tn}}}

\title{Dimensional reduction for energies with linear growth involving the bending moment}

\date{}

\maketitle
\begin{abstract}
A $\Gamma$-convergence analysis is used to perform a 3D-2D dimension
reduction of variational problems with linear growth. The adopted
scaling gives rise to a nonlinear membrane model which, because of
the presence of higher order external loadings inducing a bending
moment, may depend on the average in the transverse direction of a
Cosserat vector field, as well as on the deformation of the
mid-plane. The assumption of linear growth on the energy leads to an
asymptotic analysis in the spaces of measures and of functions with
bounded variation.

\medskip

\begin{center}
{\bf R\'esum\'e}
\end{center}

Une analyse variationnelle par $\Gamma$-convergence est utilis\'ee
pour \'etudier un probl\`eme de r\'eduction de dimension 3D-2D pour
des \'energies \`a croissance lin\'eaire. La mise \`a l'\'echelle
donne lieu \`a un mod\`ele effectif de membrane qui, en vertu de la
pr\'esence de forces ext\'erieures engendrant un moment
fl\'echissant, d\'epend de la moyenne dans la direction transverse
du vecteur de Cosserat ainsi que de la d\'eformation de la surface
moyenne. L'hypoth\`ese de croissance lin\'eaire n\'ecessite une
analyse asymptotique dans les espaces de mesures et de fonctions \`a
variation born\'ee.

\medskip

\noindent {\bf Keywords:} Dimension reduction, $\G$-convergence,
functions of bounded variation, tangent measures.

\medskip

\noindent {\bf 2000 Mathematical Subject Classification:} 49J45,
49Q20, 74K35.
\end{abstract}


\section{Introduction}


\noindent In solid mechanics, the equilibrium state of a body may be
described by an energy minimization problem. When we deal with very
thin structures, {\it i.e.}, structures whose thickness is much
smaller than the other dimensions, it is convenient to consider a
lower-dimensional model describing the behavior of the minimizing
sequences when the thickness goes to zero in the thin direction. The
knowledge of these asymptotic models may be useful, for example, in
numerical implementation since it gives less cost of time of
calculus.

In the seminal paper \cite{LR}, the authors derived a nonlinear
membrane model from three dimensional nonlinear elasticity, for
energies having a polynomial growth of order $p>1$. They computed
the $\G$-limit in the Sobolev space $W^{1,p}$ of the elastic energy
without any convexity condition. A general integral representation
result has been later established in \cite{BFF} where applications
to heterogeneous bodies in the transverse direction, homogenization
and optimal design problems are given. The case of completely
heterogeneous materials has been carried out in \cite{BF}. We also
refer to \cite{B0,B,BouFonLeoMas,BrF} for the study of fractured
thin films in the space $SBV^p$ of special functions with bounded
variation. In \cite{BFM}, a richer model has been proposed
introducing higher order surface loadings. It leads to bending
moment effects enhanced, in the asymptotic model, through the
explicit dependence on the average in the transverse direction of a
Cosserat vector field. A generalization to heterogeneous media has
been given in \cite{BF} and an abstract integral representation
result in $W^{1,p}$ (and also $SBV^p$) has been proved in \cite{B}.

In this paper, we seek to derive a two-dimensional nonlinear
membrane model from three-dimensional nonlinear elasticity involving
a bulk energy with linear growth $(p=1)$. As in  \cite{B,BF,BFM} we
allow the presence of higher order surface loadings inducing a
bending moment. Due to the linear growth of the energy, the limit
model depends on a two-dimensional deformation which belongs to the
space $BV$ of functions with bounded variation, and on a Cosserat
vector which is a Radon measure. Note that dimensional reduction
problems for energies with linear growth have also been studied in
\cite{BrF} for cracked thin films. In this case, the 3D-energy which
is the sum of a bulk and a surface term penalizing the presence of
the cracks, is defined in the space $SBV$.

Let us consider $\o$ a bounded open subset of $\Rb^2$ with Lipschitz
boundary and set $\O_\e :=\o \times (-\e/2,\e/2)$. We assume that
$\O_\e$ stands for the reference configuration of a homogeneous
nonlinear elastic thin film whose stored energy density is given by
the Borel function $W:\Rb^{3 \times 3} \to [0,+\infty)$. Our first
main assumption is that $W$ satisfies some linear growth and
coercivity conditions, {\it i.e.}, there exist $0<\b' \leq \b <
+\infty$ such that
$$\b'|\xi| \leq W(\xi) \leq \b(1+|\xi|)\quad \text{ for every }\xi
\in \Rb^{3 \times 3}.$$ To fix ideas, suppose that the body is
clamped on the lateral boundary $\G_\e:=\partial \o \times
(-\e/2,\e/2)$, and that the sections $\Sigma_\e:=\o \times \{\pm
\e/2\}$ are subjected to $\e$-dependent external loadings
$g(\e):\Sigma_\e\to \Rb^3$. Assume further that the material is
submitted to the action of a body load $f(\e):\O_\e \to \Rb^3$ so
that the total energy of the system, which is given by the
difference between the elastic energy and the work of external
forces, is
$$\E(\e)(v):=\int_{\O_\e} W(\nabla v)\, dx - \int_{\O_\e} f(\e)
\cdot v\, dx - \int_{\Sigma_\e} g(\e) \cdot v \, d\HH^2,$$ for any
kinematically admissible deformation field $v : \O_\e \to \Rb^3$
satisfying $v(x)=x$ on $\G_\e$.

Thanks to the growth condition satisfied by $W$, we have -- at this
stage -- a good functional setting if we assume any kinematically
admissible deformation fields to belong to the space $\mathcal
V(\e):= \{\varphi \in W^{1,1}(\O_\e;\Rb^3):\; T\varphi=x \text{ on
}\G_\e\}$, where $T\varphi$ denotes the trace of $\varphi$ on the
lateral boundary $\G_\e$. The problem consists in finding
equilibrium states of this body, in other words finding minimizers
of the functional $\E(\e)$ over the space $\mathcal V(\e)$.

As explained before, a natural question which arises is the study of
the asymptotic behavior of such energies as well as their (eventual)
minimizers as the thickness parameter $\e$ tends to zero. This will
be performed by means of a $\G$-convergence analysis (see {\it e.g.}
\cite{Br,DM} for a comprehensive treatment). It is now usual to
rescale the problem on a fixed domain $\O:=\o \times I$ of unit
thickness, where $I:=(-1/2,1/2)$. Similarly set $\Sigma:=\o \times
\{\pm 1/2\}$ and $\G:=\partial \o \times I$. Denoting by
$x_\a:=(x_1,x_2)$ the in-plane variable, we define
$g_\e(x_\a,x_3):=g(\e)(x_\a,\e x_3)$, $f_\e
(x_\a,x_3):=f(\e)(x_\a,\e x_3)$, $u(x_\a,x_3):=v(x_\a,\e x_3)$ and
$\E_\e(u)=\E(\e)(v)/\e$ so that
$$\E_\e(u)=\int_\O W\left( \nabla_\a u\Big|\frac{1}{\e} \nabla_3 u \right) dx
- \int_\O f_\e \cdot u\, dx - \int_\Sigma g_\e \cdot u\, d\HH^2.$$
Note that since we divided the total energy by $\e$, we expect to
get a term of order $\e$ in the limit model which corresponds,
according to the formal asymptotic expansion performed in
\cite{FRS}, to a membrane energy which only accounts for stretching
effects.

Provided the rescaled external forces $f_\e$ and $g_\e$ have an
appropriate order of magnitude (which will be discussed later), it
follows from the growth condition satisfied by $W$ and some
Poincar\'e type inequality, that minimizing sequences $\{u_\e\}$
with finite total energy will be bounded in $W^{1,1}(\O;\Rb^3)$.
Actually, the ``scaled'' gradient of $u_\e$, {\it i.e.},
$\{(\nabla_\a u_\e|(1/\e)\nabla_3 u_\e)\}$, will be uniformly
bounded in $L^1(\O;\Rb^{3 \times 3})$. However, because of the lack
of reflexibility of $W^{1,1}(\O;\Rb^3)$, such minimizing sequences
will only be relatively compact in the larger space $BV(\O;\Rb^3)$
of functions with bounded variation. Denoting by $u$ any weak* limit
in $BV(\O;\Rb^3)$ of the sequence $\{u_\e\}$, it turns out that the
only interesting deformations (according to this scaling) will
necessary satisfy $D_3 u=0$ in the sense of distributions. Hence $u$
(can be identified to a function which) belongs to $ BV(\o;\Rb^3)$
and we expect a ($\G$-)limit model depending on such deformations.

Our second main assumption is that the (rescaled) surface load can
be written as $g_\e=g_0/\e+g_1$. It follows from \cite[Remark
2.3.2]{FRS} that, denoting by $g_i^\pm$ ($i=0$ or $1$) the trace of
$g_i$ on $\o \times \{\pm 1/2\}$, the condition $g_0^+ + g_0^-=0$
must hold. The physical interpretation of this property is that a
plate of thickness $\e$ cannot support a non vanishing resultant
surface load as $\e \to 0$. Assume also for simplicity that
$f_\e=f$. If $\{u_\e\} \subset W^{1,1}(\O;\Rb^3)$ is a minimizing
sequence as above, the work of external forces has the following
form
\begin{eqnarray*}\mathcal F_\e(u_\e) & := & \int_\O f\cdot
u_\e\, dx + \int_\Sigma g_1 \cdot u_\e \, d\HH^2 +  \int_\o g_0^+
\cdot \left(\frac{u_\e(\cdot,+1/2) -
u_\e(\cdot,-1/2)}{\e}\right)dx_\a\\
& = & \int_\O f\cdot u_\e\, dx + \int_\Sigma g_1 \cdot u_\e \,
d\HH^2 +  \int_\o g_0^+ \cdot \left(\frac{1}{\e}\int_I \nabla_3
u_\e(\cdot,y_3)\, dy_3 \right)dx_\a.
\end{eqnarray*}
Let $u \in BV(\o;\Rb^3)$ be an accumulation point of $\{u_\e\}$ and
$\ovb \in \M(\o;\Rb^3)$ be a weak* limit in the space of Radon
measures of the sequence $$\left\{\frac{1}{\e} \int_I \nabla_3
u_\e(\cdot,y_3)\, dy_3\right\}$$ which does always exist up to a
subsequence. Taking the limit as $\e \to 0$ in the work of external
forces, and denoting $\overline f(x_\a):=\int_If(x_\a,x_3)\, dx_3$
yields
$$\mathcal F_\e(u_\e)\to \mathcal F(u,\ovb):= \int_\o
\big(\overline f+g_1^++g_1^-\big) \cdot u\, dx_\a + \int_\o g_0^+
\cdot d\ovb,$$ provided $f$, $g_1$ and $g_0$ are regular enough,
{\it e.g.}, $f \in L^\infty(\O;\Rb^3)$, $g_1^\pm \in
L^\infty(\o;\Rb^3)$ and $g_0^+ \in \C_0(\o;\Rb^3)$. The presence of
this higher order surface load implies the apparition in the limit
of the average in the transverse direction of the Cosserat measure
$\ovb$ which stands for bending moment effects (see
\cite{B,BF,BFM}). Hence we seek a richer $\G$-limit depending on
both $u$ and $\ovb$. Note that in general, $u$ and $\ovb$ are
completely independent macroscopic entities, and as a matter of
fact, it may happen that the measures $D_\a u$ and $\ovb$ are
mutually singular (see Example \ref{ex}).\\

The following theorem is the main result of this work and it
describes the behavior of the elastic energy as $\e \to 0$.  We
refer to section 2 for the notations used in the statement.
\begin{Theorem}\label{BZZ}
Let $\o \subset \Rb^2$ be a bounded open set and $W:\Rb^{3 \times 3}
\to [0,+\infty)$ be a Borel function satisfying
\begin{itemize}
\item[$(H_1)$] there exist $0 < \b' \leq \b < +\infty$ such that
$$\b'|\xi| \leq W(\xi) \leq \b(1+|\xi|) \quad \text{ for all }\xi
\in \Rb^{3 \times 3};$$
\item[$(H_2)$] there exist $C>0$ and $r \in (0,1)$ such that
$$|W^\infty(\xi) - W(\xi)| \leq C(1+|\xi|^{1-r}) \quad \text{ for all }\xi
\in \Rb^{3 \times 3},$$ where $W^\infty$ is the recession function
of $W$.
\end{itemize}
Then, for every $(u,\ovb) \in BV(\O;\Rb^3) \times \M(\o;\Rb^3)$, the
sequence of functionals
$$J_\e(u,\ovb):=\left\{
\begin{array}{ll}
\ds \int_\O W\left(\nabla_\a u \Big|\frac{1}{\e} \nabla_3 u\right)dx
& \text{ if } \left\{
\begin{array}{l}
u \in W^{1,1}(\O;\Rb^3),\\
\ovb=\frac{1}{\e}\int_I \nabla_3 u(\cdot,x_3)\, dx_3,
\end{array}
\right.\\[0.5cm] +\infty & \text{ otherwise,}
\end{array}
\right.$$ $\G$-converges for the weak* topology of $BV(\O;\Rb^3)
\times \M(\o;\Rb^3)$ to
\begin{equation}\label{gammarep1}E(u,\ovb):=\left\{
\begin{array}{ll}
\begin{array}{l}\ds \int_\o \mathcal Q^* W\left(\nabla_\a
u\Big|\frac{d\ovb}{d\LL^2}\right) dx_\a\\[0.4cm]
\ds \hspace{1cm} + \int_{J_u} (\mathcal Q^\ast
W)^\infty\left((u^+-u^-)\otimes\nu_u,\frac{d\ovb}{d\HH^1\res\, J_u}
\right)
d\HH^1\\[0.4cm]
\ds \hspace{1cm} + \int_\o (\mathcal Q^* W)^\infty\left(\frac{dD_\a
u}{d|D_\a^c
u|} \Big|\frac{d\ovb}{d|D_\a^c u|}\right) d|D_\a^c u|\\[0.4cm]
\ds \hspace{1cm} + \int_\o (\mathcal Q^* W)^\infty\left(0
\Big|\frac{d\ovb}{d|\ovb^\sigma|}\right) d|\ovb^\sigma| \end{array}
& \text{
if }u\in BV(\o;\Rb^3),\\[0.5cm]
+\infty & \text{ otherwise},
\end{array}
\right.
\end{equation} where
\begin{eqnarray*}{\cal Q}^\ast W(\overline \xi|b) & := &
\inf_{\lambda, \, \varphi} \left\{\int_{Q' \times I} W(\overline \xi
+ \nabla_\a \varphi|\lambda \nabla_3 \varphi)\,dx: \lambda >0,\,
\varphi \in W^{1,1}(Q' \times I;\Rb^3),\right.\nonumber\\
&&\left. \varphi(\cdot, x_3) \hbox{ is } Q'\hbox{-periodic for }
\LL^1 \hbox{-a.e. } x_3 \in I, \lambda \int_{Q' \times I}\nabla_3
\varphi(y)\, dy =b\right\},\end{eqnarray*} for all $(\overline
\xi|b) \in \Rb^{3 \times 2} \times \Rb^3$, $(\mathcal Q^* W)^\infty$
is the recession function of $\mathcal Q^* W$ and $\ovb^\sigma$ is
the singular part of $\ovb$ with respect to $|D_\a u|$ according to
the Besicovitch Decomposition Theorem.
\end{Theorem}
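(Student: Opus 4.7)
The plan is to prove the two usual inequalities for $\Gamma$-convergence separately, localizing the functional on open subsets $A \subset \o$ and arguing that the localized $\Gamma$-liminf is a trace on open sets of a Radon measure (via a De Giorgi--Letta type argument), so that one may decompose it along the Besicovitch differentiation of $\ovb$ with respect to $\LL^2$, $\HH^1\res J_u$, $|D_\a^c u|$, and $|\ovb^\sigma|$. I would first replace $W$ by its ``thin-film quasiconvexification'' $\mathcal Q^*W$ on the side of admissible sequences by invoking the known integral representation (cf.\ \cite{BF,BFM,LR}) for the non-singular part, which reduces the problem to controlling the singular contributions coming from $D^s_\a u$ and $\ovb^\sigma$.

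For the lower bound, I would use the blow-up method of Fonseca--M\"uller. Given any sequence $u_\e \to u$ weakly$^*$ in $BV(\O;\Rb^3)$ with $\tfrac{1}{\e}\int_I \nabla_3 u_\e(\cdot,x_3)\,dx_3 \xrightharpoonup{*} \ovb$ in $\M(\o;\Rb^3)$ and $\sup_\e J_\e(u_\e,\ovb) < +\infty$, the scaled gradient measures $\mu_\e:=W\bigl(\nabla_\a u_\e|\tfrac{1}{\e}\nabla_3 u_\e\bigr)\LL^3\res\O$ converge (up to a subsequence) weakly$^*$ to some nonnegative $\mu \in \M(\O)$. The identification $\mu \geq (\text{integrand})\LL^2 + \ldots$ on $\o$ is obtained by computing $d\mu/d\nu$ at $\nu$-a.e.\ point for each of the four reference measures $\nu \in \{\LL^2,\,\HH^1\res J_u,\,|D_\a^c u|,\,|\ovb^\sigma|\}$. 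At a regular point $x_0$, one rescales the deformation on $Q'(x_0,\rho)\times I$ to recover, after a diagonal extraction, a configuration admissible in the cell problem defining $\mathcal Q^*W$; at a jump or Cantor point, the same blow-up together with the coercivity $(H_1)$ and the control $(H_2)$ on $W^\infty-W$ yields the recession function of $\mathcal Q^*W$ (this is where $(H_2)$ is essential: it allows one to trade the integrand against its recession with an error of order $o(1)$ under linear rescaling).

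For the upper bound, I would first assume $u \in C^\infty(\bar\o;\Rb^3)$ and that $\ovb = b\LL^2$ with $b \in C^\infty(\bar\o;\Rb^3)$. Using the cell formula defining $\mathcal Q^*W$, I build, on each cube of a fine partition of $\o$, an almost optimal $\varphi$ periodized and glued via a standard cut-off, then produce a recovery sequence $u_\e \in W^{1,1}(\O;\Rb^3)$ whose scaled gradient reproduces $(\nabla_\a u|\,b)$ up to a $o(1)$ error and satisfies the constraint $\tfrac{1}{\e}\int_I \nabla_3 u_\e \to \ovb$. A diagonalization with respect to partition size and $\e$ gives the limsup inequality in this smooth case. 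One then removes the regularity by a density argument: any $(u,\ovb) \in BV(\o;\Rb^3)\times \M(\o;\Rb^3)$ is approximated in the intermediate topology / weak$^*$ sense by smooth pairs $(u_n,\ovb_n)$, and a Reshetnyak-type continuity property of the limit functional $E$ (with respect to strict convergence on both factors) transfers the bound to the general case.

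The main obstacle is the singular Cosserat term $\int_\o (\mathcal Q^*W)^\infty(0|d\ovb/d|\ovb^\sigma|)\,d|\ovb^\sigma|$. Here $\ovb^\sigma$ may be singular with respect to every part of $Du$, so the blow-up sees only a purely ``vertical'' rescaled gradient; one must show that tangent measures to $\ovb^\sigma$ are still captured by test configurations in the cell problem with $\bar\xi=0$, and that the sequence's horizontal derivative blow-up is negligible $|\ovb^\sigma|$-a.e. The same mechanism, but in reverse, is needed in the recovery sequence: producing a sequence whose bending-moment measure converges to an arbitrary singular measure $\ovb^\sigma$ independent of $u$ requires careful superposition of localized ``vertical'' profiles oscillating only in $x_3$ (admissible thanks to the freedom in $\lambda$ and $\varphi$ in the cell problem), verifying at the same time that they do not spoil the convergence $u_\e \xrightharpoonup{*} u$ nor the contribution to the horizontal gradient. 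Example~\ref{ex} in the paper shows this decoupling really occurs, confirming that this term cannot be avoided and must be handled independently from the three terms involving $Du$.
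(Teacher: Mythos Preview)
Your lower-bound strategy coincides with the paper's: blow-up \`a la Fonseca--M\"uller at generic points of each of the four reference measures, using $(H_2)$ to pass from $W$ to $W^\infty$ on the singular parts. The paper carries this out exactly as you describe.

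For the upper bound your outline is in the right spirit (smooth case first, then density), and the paper does ultimately rely on mollification of $(u,\ovb)$ together with the already-known Sobolev representation $J_{\{\e_n\}}(u*\varrho_j,\ovb*\varrho_j,\cdot)=\int \mathcal Q^*W(\nabla_\a(u*\varrho_j)\mid \ovb*\varrho_j)\,dx_\a$. However, your appeal to a ``Reshetnyak-type continuity property of $E$'' hides the only genuinely delicate step. The functional $E$ is \emph{not} continuous under strict convergence, because $\mathcal Q^*W$ is neither convex nor positively $1$-homogeneous; Reshetnyak's theorem does not apply to it directly. The paper's device is to introduce the auxiliary function
\[
f(\xi):=\sup_{t>0}\frac{\mathcal Q^*W(t\xi)-\mathcal Q^*W(0)}{t},
\]
which \emph{is} positively $1$-homogeneous and Lipschitz, so that Reshetnyak continuity applies to $\int f\bigl(d(D_\a u\mid\ovb)/d|(D_\a u\mid\ovb)|\bigr)\,d|(D_\a u\mid\ovb)|$. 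This yields only an upper bound $\mathcal Q^*W\leq \mathcal Q^*W(0)+f$, but that suffices for the limsup inequality. The identification $f(z\otimes\nu\mid b)=(\mathcal Q^*W)^\infty(z\otimes\nu\mid b)$ on rank-one directions (Proposition~\ref{convex}) together with Alberti's rank-one theorem is then what lets you recover the correct density $(\mathcal Q^*W)^\infty$ on the singular part $D^s_\a u$. Without naming this mechanism your density argument has a gap.

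A second, smaller remark: for the $\ovb^\sigma$ term you propose an explicit recovery by ``superposition of localized vertical profiles''. The paper avoids this entirely: the same mollification-plus-$f$-plus-Reshetnyak argument handles $\ovb^\sigma$ as well (since there $\tfrac{dD_\a u}{d|\ovb^\sigma|}=0$, a rank-one matrix trivially), so no separate construction is needed. Structurally, the paper also organizes the upper bound as three pointwise Radon--Nikod\'ym derivative estimates for the localized $\Gamma$-limit (using that $J_{\{\e_n\}}(u,\ovb,\cdot)$ is a measure, Lemma~\ref{lemma2.1bfmbend}) rather than as a global density argument, but this is a matter of presentation; the substance is the $f$-function trick above.
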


\begin{Remark}{\rm
The fact that $E$ is the $\G$-limit of the family $\{J_\e\}$ for the
weak* topology of $BV(\O;\Rb^3) \times \M(\o;\Rb^3)$ means that for
every $(u,\ovb) \in BV(\o;\Rb^3) \times \M(\o;\Rb^3)$ and every
sequence $\{\e_j\} \searrow 0^+$, then:
\begin{enumerate}
\item[(i)] for any sequence $\{u_j\} \subset W^{1,1}(\O;\Rb^3)$ such
that $u_j \xrightharpoonup[]{*} u$ in $BV(\O;\Rb^3)$ and
$\frac{1}{\e_j}\int_I \nabla_3 u_j(\cdot,x_3)\, dx_3
\xrightharpoonup[]{*} \ovb$ in $\M(\o;\Rb^3)$,
$$E(u,\ovb) \leq \liminf_{j \to +\infty}\int_\O W\left(\nabla_\a u_j \Big|\frac{1}{\e_j}\nabla_3 u_j
\right)dx;$$
\item[(ii)] there exists a sequence $\{\bar u_j\} \subset W^{1,1}(\O;\Rb^3)$ such
that $\bar u_j \xrightharpoonup[]{*} u$ in $BV(\O;\Rb^3)$,
$\frac{1}{\e_j}\int_I \nabla_3 \bar u_j(\cdot,x_3)\, dx_3
\xrightharpoonup[]{*} \ovb$ in $\M(\o;\Rb^3)$, and
$$E(u,\ovb) = \lim_{j \to +\infty}\int_\O W\left(\nabla_\a \bar u_j \Big|\frac{1}{\e_j}\nabla_3 \bar u_j
\right)dx.$$
\end{enumerate}
}\end{Remark}

The strategy used to prove Theorem \ref{BZZ} is based on the blow-up
method introduced in \cite{FM0,FM} for the study of the relaxation
of integral functionals with linear growth. It rests on a
localization of the energy around convenient Lebesgue points, and
uses fine properties of measures and $BV$ functions at these points.
We adapt here this technique to deal with functionals
depending on pairs $BV$ function/measure.\\

The following result is the analogue of Theorem \ref{BZZ} without
bending moment. We shall not give a proof of it since it can be
deduced from the one of Theorem \ref{BZZ} with much easier
arguments.

\begin{Theorem}\label{BZZ1}
Let $\o \subset \Rb^2$ be a bounded open set and $W:\Rb^{3 \times 3}
\to [0,+\infty)$ be a Borel function satisfying $(H_1)$ and $(H_2)$.
Then, for every $u \in BV(\O;\Rb^3)$, the sequence of functionals
$$J_\e(u):=\left\{
\begin{array}{ll}
\ds \int_\O W\left(\nabla_\a u \Big|\frac{1}{\e} \nabla_3 u\right)dx
& \text{ if } u \in W^{1,1}(\O;\Rb^3),\\[0.4cm]
+\infty & \text{ otherwise,}
\end{array}
\right.$$ $\G$-converges for the weak* topology of $BV(\O;\Rb^3)$ to
$$E(u):=\left\{
\begin{array}{ll}
\begin{array}{l}\ds \int_\o \mathcal QW_0(\nabla_\a
u)\, dx_\a+ \int_{J_u} (\mathcal Q W_0)^\infty
\big((u^+-u^-)\otimes\nu_u\big)\, d\HH^1\\[0.4cm]
\ds \hspace{1cm} + \int_\o (\mathcal Q
W_0)^\infty\left(\frac{dD^c_\a u}{d|D_\a^c u|}\right) d|D_\a^c u|
\end{array} & \text{
if }u\in BV(\o;\Rb^3),\\[0.5cm]
+\infty & \text{ otherwise},
\end{array}
\right. $$ where $W_0(\overline\xi):= \inf\{W(\overline \xi|b):\, b
\in \Rb^3)$ for all $\overline \xi \in \Rb^{3 \times 2}$, $\mathcal
QW_0$ is the 2D-quasiconvexification of $W_0$, and $(\mathcal Q
W_0)^\infty$ is the recession function of $\mathcal QW_0$.
\end{Theorem}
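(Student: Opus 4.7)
The strategy is to deduce Theorem \ref{BZZ1} from Theorem \ref{BZZ} by marginalising out the Cosserat measure $\ovb$. More precisely, I would prove that for every $u \in BV(\o;\Rb^3)$,
$$E(u) = \inf\bigl\{E(u,\ovb):\; \ovb \in \M(\o;\Rb^3)\bigr\},$$
and then lift this identity to a $\G$-convergence statement using the compactness provided by the linear growth hypothesis $(H_1)$.

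For the $\G$-liminf, take $u_j \in W^{1,1}(\O;\Rb^3)$ with $u_j \xrightharpoonup[]{*} u$ in $BV(\O;\Rb^3)$ and $\sup_j J_{\e_j}(u_j)<+\infty$. By $(H_1)$ the sequence $(1/\e_j)\int_I \nabla_3 u_j(\cdot,x_3)\,dx_3$ is bounded in $L^1(\o;\Rb^3)$, so up to a subsequence it converges weakly$^*$ in $\M(\o;\Rb^3)$ to some $\ovb$. Theorem \ref{BZZ} gives $\liminf_j J_{\e_j}(u_j) \geq E(u,\ovb)$, and the pointwise inequalities $\mathcal{Q}^*W(\bar\xi|b) \geq \mathcal{Q}W_0(\bar\xi)$ and $(\mathcal{Q}^*W)^\infty(\bar\xi|b) \geq (\mathcal{Q}W_0)^\infty(\bar\xi)$, together with non-negativity of the last (singular) term in \eqref{gammarep1}, yield $E(u,\ovb) \geq E(u)$. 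For the $\G$-limsup, I would use a measurable selection argument to pick $\ovb_\ast$ whose densities with respect to $\LL^2$, $\HH^1\res J_u$ and $|D_\a^c u|$ nearly realise the pointwise infima of the first three integrands of \eqref{gammarep1}, and whose singular part with respect to $|D_\a u|$ vanishes; applying Theorem \ref{BZZ}(ii) to the pair $(u,\ovb_\ast)$ produces $\bar u_j \in W^{1,1}(\O;\Rb^3)$ with $\lim_j J_{\e_j}(\bar u_j) = E(u,\ovb_\ast) \leq E(u)+C\delta$, and a diagonal procedure removes $\delta$.

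Everything therefore reduces to the algebraic identities
$$\mathcal{Q}W_0(\bar\xi) = \inf_{b \in \Rb^3}\mathcal{Q}^*W(\bar\xi|b), \qquad (\mathcal{Q}W_0)^\infty(\bar\xi) = \inf_{b \in \Rb^3}(\mathcal{Q}^*W)^\infty(\bar\xi|b),$$
which is where I expect the main technical obstacle to lie. The direction ``$\geq$'' in the first identity is relatively direct: for any $(\lambda,\varphi)$ admissible in the definition of $\mathcal{Q}^*W(\bar\xi|b)$, the bound $W(\bar\xi+\nabla_\a\varphi|\lambda\nabla_3\varphi) \geq W_0(\bar\xi+\nabla_\a\varphi)$ together with the $Q'$-periodicity of $\varphi(\cdot,x_3)$ yields $\int_{Q'\times I} W_0(\bar\xi+\nabla_\a\varphi)\,dy \geq \mathcal{Q}W_0(\bar\xi)$ by definition of the 2D-quasiconvexification. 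The reverse inequality ``$\leq$'' is the delicate part: given a 2D-test $\psi$ for $\mathcal{Q}W_0(\bar\xi)$, one must design a three-dimensional competitor $\varphi(x_\a,x_3)=\psi(x_\a)+\eta_\lambda(x_\a,x_3)$ such that $(\bar\xi+\nabla_\a\psi+\nabla_\a\eta_\lambda\,|\,\lambda\nabla_3\eta_\lambda)$ approximately achieves the pointwise infimum defining $W_0(\bar\xi+\nabla_\a\psi)$, and then let $\lambda\to 0^+$, using hypothesis $(H_2)$ to dominate the cost of the resulting high-frequency oscillation in $x_3$. The recession identity is obtained by the same scheme applied to $W^\infty$, which inherits the relevant structural properties from $W$ under $(H_1)$--$(H_2)$; the remaining steps are routine measure-theoretic bookkeeping.
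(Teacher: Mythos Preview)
Your approach is correct but genuinely different from the paper's. The paper does not prove Theorem~\ref{BZZ1} explicitly; it only remarks that the result ``can be deduced from the \emph{one} of Theorem~\ref{BZZ} with much easier arguments'', meaning that one reruns the blow-up proof of Theorem~\ref{BZZ} while simply dropping all the bookkeeping of $\ovb$. You instead derive the \emph{statement} of Theorem~\ref{BZZ1} from the \emph{statement} of Theorem~\ref{BZZ} by infimising over $\ovb\in\M(\o;\Rb^3)$. This is a legitimate and arguably more elegant route, and it buys you the result without reopening the blow-up machinery; its cost is the two pointwise identities you isolate.

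Two remarks on those identities. First, the relation $\inf_{b\in\Rb^3}\mathcal Q^*W(\overline\xi|b)=\mathcal QW_0(\overline\xi)$ is already recorded in the paper as the last item of Proposition~\ref{propQastW} (taken from \cite{BFM}), and its proof there uses only $(H_1)$; your proposed three-dimensional competitor with $x_3$-oscillations and the invocation of $(H_2)$ are therefore unnecessary at this point. Second, your sketch for the recession identity is not quite complete: ``the same scheme applied to $W^\infty$'' yields $\inf_b\mathcal Q^*(W^\infty)(\overline\xi|b)=\mathcal Q\big((W^\infty)_0\big)(\overline\xi)$, and you would still need both $(\mathcal Q^*W)^\infty=\mathcal Q^*(W^\infty)$ (this is Proposition~\ref{denssurf}, but established there only on rank-one matrices) and the commutation $\mathcal Q\big((W^\infty)_0\big)=(\mathcal QW_0)^\infty$, which is a separate statement you do not address. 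A cleaner way to close the recession identity on the rank-one matrices that actually occur in $E(u,\ovb)$ is to use Proposition~\ref{convex}: since $t\mapsto\mathcal Q^*W\big(t(z\otimes\nu|b)\big)$ is convex, the $\limsup$ defining $(\mathcal Q^*W)^\infty(z\otimes\nu|b)$ is a genuine limit; combining this with Proposition~\ref{propQastW}, the Lipschitz dependence on $b$ (which follows from convexity in $b$ and the linear growth bound \eqref{QastWgrowth}), and the coercivity bound on near-optimal $b$'s, one obtains $\inf_{b}(\mathcal Q^*W)^\infty(z\otimes\nu|b)=(\mathcal QW_0)^\infty(z\otimes\nu)$ directly. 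With this correction your marginalisation argument and the ensuing measurable-selection/diagonal steps go through.
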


The paper is organized as follows: In section 2, we start by
introducing some useful notations and basic notions. Then, in
section 3 we prove some properties of the different energy densities
involved in our analysis. In section 4, we state some properties of
the $\G$-limit and the last two sections are devoted to the proof of
our $\G$-convergence result (Theorem \ref{BZZ}). The lower bound is
established in section 5 and the upper bound is proved in the last
one.

\section{Notations and Preliminaries}

\noindent Let $\O$ be a generic open subset of $\Rb^N$, we denote by
$\M(\O)$ the space of all signed Radon measures in $\O$ with bounded
total variation. By the Riesz Representation Theorem, $\M(\O)$ can
be identified to the dual of the separable space $\C_0(\O)$ of
continuous functions on $\O$ vanishing on the boundary $\partial
\O$. The $N$-dimensional Lebesgue measure in $\Rb^N$ is designated
as $\LL^N$ while $\HH^{N-1}$ denotes the $(N-1)$-dimensional
Hausdorff measure. If $\mu \in \M(\O)$ and $\lambda \in \M(\O)$ is a
nonnegative Radon measure, we denote by $\frac{d\mu}{d\lambda}$ the
Radon-Nikod\'ym derivative of $\mu$ with respect to $\lambda$. By a
generalization of the Besicovich Differentiation Theorem (see
\cite[Proposition 2.2]{ADM}), it can be proved that there exists a
Borel set $E \subset \O$ such that $\lambda(E)=0$ and
$$\frac{d\mu}{d\lambda}(x)=\lim_{\rho \to 0^+} \frac{\mu(x+\rho \, C)}{\lambda(x+\rho \, C)}$$
for all $x \in {\rm Supp }\, \mu \setminus E$ and any open convex
set $C$ containing the origin.

\vskip5pt

We say that $u \in L^1(\O;\Rb^d)$ is a function of bounded
variation, and we write $u \in BV(\O;\Rb^d)$, if all its first
distributional derivatives $D_j u_i$ belong to $\M(\O)$ for $1\leq i
\leq d$ and $1 \leq j \leq N$. We refer to \cite{AFP} for a detailed
analysis of $BV$ functions. The matrix-valued measure whose entries
are $D_j u_i$ is denoted by $Du$ and $|Du|$ stands for its total
variation. By the Lebesgue Decomposition Theorem we can split $Du$
into the sum of two mutually singular measures $D^a u$ and $D^s u$
where $D^a u$ is the absolutely continuous part of $Du$ with respect
to the Lebesgue measure $\LL^N$, while $D^s u$ is the singular part
of $Du$ with respect to $\LL^N$. By $\nabla u$ we denote the
Radon-Nikod\'ym derivative of $D^au$ with respect to the Lebesgue
measure so that we can write
$$Du=\nabla u \LL^N + D^s u.$$
Let $J_u$ be the jump set of $u$ defined as the set of points $x \in
\O$ such that there exist $u^\pm(x) \in \Rb^d$ (with $u^+(x)\neq
u^-(x)$) and $\nu_u(x) \in \mathbb S^{N-1}$ satisfying
$$\lim_{\rho \to 0^+} \frac{1}{\rho^N} \int_{\left\{y \in Q_{\nu_u(x)}(x,\rho) : \, \pm (y-x) \cdot \nu_u(x)
>0\right\}} |u(y) - u^\pm(x)|\, dy=0,$$
where $Q_\nu(x,\rho)$ denotes any cube of $\Rb^N$ centered at $x \in
\Rb^N$, with edge length $\rho>0$, and such that two of its faces
are orthogonal to $\nu \in \mathbb S^{N-1}$. It is known that $J_u$
is a countably $\HH^{N-1}$-rectifiable Borel set. The measure $D^s
u$ can in turn be decomposed into the sum of a jump part and a
Cantor part defined by $D^j u:=D^s u \res\, J_u$ and $D^c u:= D^s u
\res\, (\O \setminus J_u)$. We now recall the decomposition of $Du$:
$$Du= \nabla u  \LL^N + (u^+ -u^-)\otimes \nu_u {\cal H}^{N-1}\res\,
J_u + D^c u.$$ By Alberti's Rank One Theorem (see \cite{A}), the
matrix defined by
$$A(x):=\frac{dD^cu}{d|D^cu|}(x) \in \Rb^{d \times N}$$
has rank one for $|D^cu|$-a.e. $x \in \O$.  If $\O$ has
Lipschitz boundary, we denote by $Tu$ the trace of $u \in
BV(\O;\Rb^d)$ (or $u \in W^{1,1}(\O;\Rb^d)$) on $\partial \O$.

 We now recall basic facts about tangent measures and tangent
space to measures referring again to \cite{AFP} for more details.
Let $Q:=(-1/2,1/2)^N$ be the unit cube of $\Rb^N$ and let
$Q(x,\rho):=x+\rho \, Q$. If $\mu \in \M(\O)$ is a non negative
Radon measure in $\O$ and $x\in \O$, we denote by ${\rm Tan}(\mu,x)$
the set of all non negative finite Radon measures $\nu \in \M(Q)$
such that
$$\frac{1}{\mu(Q(x,\rho_j))} \int_{\Rb^N}
\varphi\left(\frac{y-x}{\rho_j}\right)\, d\mu(y) \to \int_Q
\varphi(y)\, d\nu(y),$$ for any $\varphi \in \C_c(\Rb^N)$ and for
some sequence $\{\rho_j\} \searrow 0^+$. The set ${\rm Tan}(\mu,x)$
is not empty and for any $t \in (0,1)$, there exists $\nu \in {\rm
Tan}(\mu,x)$ such that $\nu(\overline{tQ}) \geq t^N$ for $\mu$-a.e.
$x \in \O$ (see \cite[Corollary 2.43]{AFP}).

When $\mu=\HH^{N-1} \res \, S$ for some countably
$\HH^{N-1}$-rectifiable set $S \subset \Rb^N$, we say that $S$
admits  an approximate tangent space at $x \in S$ if there
exists a $(N-1)$-dimensional linear subspace $\pi$ of $\Rb^N$ such
that
$$\frac{1}{\rho^{N-1}} \int_S
\varphi\left(\frac{y-x}{\rho}\right)\, d\HH^{N-1}(y) \to \int_\pi
\varphi(y)\, d\HH^{N-1}(y),$$ for any $\varphi \in \C_c(\Rb^N)$.
From \cite[Theorem 2.83]{AFP}, we know that $\HH^{N-1}$-a.e. $x \in
S$ admits an approximate tangent space. Moreover, the
Federer-Vol'pert Theorem (see \cite[Theorem 3.78]{AFP}) asserts that
if $u \in BV(\O;\Rb^d)$, then for $\HH^{N-1}$-a.e. $x \in J_u$, the
hyperplane $\nu_u(x)^\perp$ coincides with the approximate tangent
space of $J_u$ at $x$.

 \vskip5pt

In the sequel we will always deal with the cases $N=2$ or $3$. Let
$\o \subset \Rb^2$ be a bounded open set and $I:=(-1/2,1/2)$, we
define $\O:=\o \times I$. We denote by $Q':=(-1/2,1/2)^2$ the unit
cube in $\Rb^2$ and if $\nu \in \mathbb S^1$, $Q'_\nu$ is the unit
cube centered at the origin with its faces either parallel or
orthogonal to $\nu$. If $x \in \Rb^2$ and $\rho>0$, we set
$Q'(x,\rho)=x+\rho \, Q'$ and $Q'_\nu(x,\rho):=x+\rho \, Q'_\nu$.
The canonical basis of $\Rb^2$ is denoted by $(e_1,e_2)$.

Given a matrix $\xi \in \Rb^{3 \x 3}$, $\xi$ will be written as
$(\overline{\xi}|\xi_3) $, where $\overline{\xi}:=(\xi_1|\xi_2 ) \in
\Rb^{3 \x 2}$ and $\xi_i$ denotes the $i$-th column of $\xi$. If $x
\in \Rb^3$, then $x_\a:=(x_1,x_2)\in \Rb^2$ is the vector of the
first two components of $x$. The notation $\nabla_\a$ and $\nabla_3$
denote respectively (approximate) differentiation with respect to
the variables $x_\a$ and $x_3$.

\section{Properties of the energy densities}

\subsection{The bulk energy density}

\noindent As in \cite{BFM}, we define $\mathcal Q^* W : \Rb^{3
\times 2} \times \Rb^3 \to [0,+\infty)$ by
\begin{eqnarray}\label{QastW}
{\cal Q}^\ast W(\overline \xi|b) & := & \inf_{\lambda, \, \varphi}
\left\{\int_{Q' \times I} W(\overline \xi + \nabla_\a
\varphi|\lambda \nabla_3 \varphi)\,dx: \lambda >0,\,
\varphi \in W^{1,1}(Q' \times I;\Rb^3),\right.\nonumber\\
&&\left. \varphi(\cdot, x_3) \hbox{ is } Q'\hbox{-periodic for }
\LL^1 \hbox{-a.e. } x_3 \in I, \lambda \int_{Q' \times I}\nabla_3
\varphi\, dy =b\right\}.
\end{eqnarray}

We recall the main properties of ${\cal Q}^\ast W$ proved in
\cite[Proposition 1.1]{BFM}.
\begin{Proposition}\label{propQastW}
Let $W:\Rb^{3 \times 3} \to [0,+\infty)$ be a Borel function
satisfying $(H_1)$ and let ${\cal Q}^\ast W$ be defined by
(\ref{QastW}). The following properties hold:
\begin{itemize}
\item $\mathcal CW \leq {\cal Q}^\ast W \leq {\cal Q}W$, where $\mathcal CW$ and ${\cal Q}W$
denote, respectively, the convex and quasiconvex envelopes of $W$;
\item for all $\overline{\xi} \in \Rb^{3 \times 2}$ and $b \in
\Rb^3$,
\begin{equation}\label{QastWgrowth}
\b' (|\overline{\xi}| + |b|) \leq {\cal Q}^\ast W(\overline{\xi}|
b)\leq \beta(1 + |\overline{\xi}|+ |b|);
\end{equation}
\item there holds
\begin{equation}\label{1.8BFM}
{\cal Q}^\ast ({\cal Q}W)= {\cal Q}^\ast W;
\end{equation}
\item let $W_0 : \Rb^{3 \times 2} \to [0,+\infty)$ be given by
$W_0(\overline{\xi}):=\inf \left\{W(\overline{\xi}|b): b \in
\Rb^3\right\}$ and ${\cal Q}W_0$ denotes its 2D-quasiconvex
envelope. Then we have
$$\inf_{b \in \Rb^3}{\cal Q}^\ast W(\overline{\xi}|b)= {\cal
Q}W_0(\overline{\xi}).$$
\end{itemize}
\end{Proposition}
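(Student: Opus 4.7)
The plan is to derive all four properties directly from the defining formula \eqref{QastW}, exploiting two structural facts: the affine constraint on the mean of $\lambda\nabla_3\varphi$, and the $Q'$-periodicity which forces $\int_{Q'\times I}\nabla_\alpha\varphi\,dx=0$; together these imply that every admissible pair $(\lambda,\varphi)$ produces a gradient field with average exactly $(\overline{\xi}|b)$. This mean identity is the engine for both the Jensen-type lower bounds and the reduction to standard (quasi)convex relaxation.

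For the sandwich $\mathcal{C}W \leq \mathcal{Q}^*W \leq \mathcal{Q}W$, I would first establish $\mathcal{Q}^*W \leq \mathcal{Q}W$ by setting $\lambda=1$ and choosing $\varphi(x)=x_3 b+\psi(x)$, where $\psi$ is a $(Q'\times I)$-periodic test function realising (up to $\varepsilon$) the value $\mathcal{Q}W(\overline{\xi}|b)$ in the periodic representation of the quasiconvex envelope; such a $\varphi$ is admissible in \eqref{QastW} and achieves the same energy. The lower bound $\mathcal{C}W \leq \mathcal{Q}^*W$ is one application of Jensen's inequality to $\mathcal{C}W\leq W$ combined with the mean identity above. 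The growth estimates \eqref{QastWgrowth} then follow: the upper bound from the explicit competitor $\lambda=1$, $\varphi(x)=x_3 b$, which gives $\mathcal{Q}^*W(\overline{\xi}|b)\leq W(\overline{\xi}|b) \leq \beta(1+|\overline{\xi}|+|b|)$, and the lower bound from $\mathcal{Q}^*W \geq \mathcal{C}W \geq \beta'|\cdot|$, up to an irrelevant norm-equivalence constant on $\Rb^{3\times 3}$.

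For the invariance $\mathcal{Q}^*(\mathcal{Q}W)=\mathcal{Q}^*W$, the inequality $\mathcal{Q}^*(\mathcal{Q}W)\leq \mathcal{Q}^*W$ is immediate from $\mathcal{Q}W \leq W$. For the reverse, I would use a two-level relaxation argument: pick a near-optimal pair $(\lambda,\varphi)$ for $\mathcal{Q}^*(\mathcal{Q}W)(\overline{\xi}|b)$, cover $Q'\times I$ by a fine Vitali family of cubes centred at Lebesgue points $x_i$ of $\nabla\varphi$, and on each cube add a rescaled copy of a test function for $\mathcal{Q}W$ at $(\overline{\xi}+\nabla_\alpha\varphi(x_i)|\lambda\nabla_3\varphi(x_i))$; because these local correctors have zero trace on the cube boundaries, the global perturbation preserves the $Q'$-periodicity of $\varphi(\cdot,x_3)$ and, crucially, adds zero to $\int_{Q'\times I}\nabla_3\varphi\,dy$, so that the constraint $\lambda\int\nabla_3\tilde\varphi\,dy=b$ is maintained.

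Finally, for the identity $\inf_{b}\mathcal{Q}^*W(\overline{\xi}|b)=\mathcal{Q}W_0(\overline{\xi})$: for $\leq$, from an in-plane test $\psi$ for $\mathcal{Q}W_0(\overline{\xi})$, measurable selection gives $c(x_\alpha)$ with $W(\overline{\xi}+\nabla_\alpha\psi|c)\leq W_0(\overline{\xi}+\nabla_\alpha\psi)+\varepsilon$; after a mild mollification of $c$, the 3D competitor $\varphi(x)=\psi(x_\alpha)+x_3 c(x_\alpha)$ with $\lambda=1$ and $b:=\int_{Q'} c\,dx_\alpha$ realises the bound, the spurious term $x_3\nabla_\alpha c$ being controlled by regularisation. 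For $\geq$, apply $W\geq W_0$ pointwise, integrate in $x_3$, and use the quasiconvexity of $\mathcal{Q}W_0$ on the in-plane average $\int_I \varphi(\cdot,x_3)\,dx_3$, which is $Q'$-periodic and hence admissible for $\mathcal{Q}W_0(\overline{\xi})$. I expect the main obstacle to be the invariance step, because preserving the nonlocal mean constraint $\lambda\int\nabla_3\tilde\varphi\,dy=b$ while gluing local quasiconvex correctors is more delicate than in the standard unconstrained relaxation argument and requires careful choice of the Vitali covering together with the zero-trace structure of the local test functions.
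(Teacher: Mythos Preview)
The paper does not prove this proposition at all: it is stated there as a recollection of \cite[Proposition~1.1]{BFM}, so there is no argument in the paper to compare yours against. Your sketch is a sound direct proof of the four items and follows the natural route one would take; the mean identity $\int_{Q'\times I}(\overline\xi+\nabla_\alpha\varphi\,|\,\lambda\nabla_3\varphi)\,dx=(\overline\xi\,|\,b)$ is indeed the key mechanism.

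Two small technical points are worth tightening. First, in the lower growth bound you obtain $\mathcal Q^\ast W(\overline\xi|b)\ge \mathcal C W(\overline\xi|b)\ge \beta'|(\overline\xi|b)|$, which only gives $\beta'(|\overline\xi|+|b|)$ up to a factor $1/\sqrt{2}$; as you say, this is merely a norm-equivalence constant and harmless for every use of \eqref{QastWgrowth} in the paper. Second, in the $\geq$ direction of the last item your plan to ``use quasiconvexity of $\mathcal QW_0$ on the in-plane average $\int_I\varphi(\cdot,x_3)\,dx_3$'' would require convexity rather than quasiconvexity; the clean argument is to slice: for $\mathcal L^1$-a.e.\ $x_3\in I$ the map $\varphi(\cdot,x_3)$ is $Q'$-periodic, so $\int_{Q'}\mathcal QW_0(\overline\xi+\nabla_\alpha\varphi(\cdot,x_3))\,dx_\alpha\ge \mathcal QW_0(\overline\xi)$, and then integrate in $x_3$. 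With these adjustments your outline goes through.
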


We now highlight a convexity property of the energy density
$\mathcal Q^*W$.

\begin{Proposition}\label{convex}
The function $\mathcal Q^*W$ is convex in the directions $(z \otimes
\nu, b)$, with $z$, $b \in \Rb^3$ and $\nu \in \mathbb S^1$.
\end{Proposition}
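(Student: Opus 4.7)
The plan is to establish convexity along the direction $(z\otimes\nu,b)$ via a periodic laminate construction in the in-plane direction $\nu$. Given $t\in(0,1)$ and $(\bar\xi_1|b_1),(\bar\xi_2|b_2)$ with $\bar\xi_2-\bar\xi_1=z\otimes\nu$, I would set $\bar\xi_t:=t\bar\xi_1+(1-t)\bar\xi_2$ and $b_t:=tb_1+(1-t)b_2$, and for $\eta>0$ I would select admissible pairs $(\lambda_i,\varphi_i)$ in (\ref{QastW}) satisfying $\int_{Q'\times I}W(\bar\xi_i+\nabla_\a\varphi_i|\lambda_i\nabla_3\varphi_i)\,dx\le\mathcal Q^*W(\bar\xi_i|b_i)+\eta$, $i=1,2$. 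The aim is to exhibit a single admissible pair $(\lambda_t,\psi)$ for $\mathcal Q^*W(\bar\xi_t|b_t)$ whose energy is bounded above by $t\mathcal Q^*W(\bar\xi_1|b_1)+(1-t)\mathcal Q^*W(\bar\xi_2|b_2)+\eta$.

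The construction would proceed at two scales. First, let $\Phi:\Rb\to\Rb$ be the $1$-periodic Lipschitz sawtooth with $\Phi'=-(1-t)$ on $(0,t)$ and $\Phi'=t$ on $(t,1)$, and set $\Phi_n(s):=\Phi(ns)/n$; the background $x_\a\mapsto z\,\Phi_n(x_\a\cdot\nu)$ is (after a suitable rational approximation of $\nu$) $Q'$-periodic, with in-plane gradient equal to $-(1-t)z\otimes\nu$ on ``type-1'' $\nu$-strips of total in-plane measure $t$, and to $tz\otimes\nu$ on ``type-2'' strips of total measure $1-t$. Second, inside each type-$i$ strip I would tile by sub-cells of edge $\delta_i:=\lambda_i/\lambda_t$, with $\lambda_t>0$ a common free parameter, and in each cell centered at some $x_\star$ place the rescaled copy $\delta_i\,\varphi_i\big((x_\a-x_\star)/\delta_i,\,x_3\big)$. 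The combined test function is
$$\psi(x_\a,x_3):=z\,\Phi_n(x_\a\cdot\nu)+\sum_{\text{cells}}\delta_i\,\varphi_i\Big(\frac{x_\a-x_\star}{\delta_i},\,x_3\Big).$$

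A direct chain-rule computation shows that on every type-$i$ sub-cell, $\bar\xi_t+\nabla_\a\psi=\bar\xi_i+\nabla_\a\varphi_i(\cdot/\delta_i,x_3)$; the choice $\delta_i=\lambda_i/\lambda_t$ then guarantees $\lambda_t\nabla_3\psi=\lambda_i\nabla_3\varphi_i(\cdot/\delta_i,x_3)$. Thus on each type-$i$ region the integrand $W(\bar\xi_t+\nabla_\a\psi|\lambda_t\nabla_3\psi)$ coincides, after a change of variable in each cell, with that of the near-optimal competitor for $\mathcal Q^*W(\bar\xi_i|b_i)$; summation over cells (whose type-$i$ region has $\LL^2$-measure $t$ and $1-t$ respectively in $Q'$) produces a total energy equal to $t\mathcal E_1+(1-t)\mathcal E_2$, where $\mathcal E_i:=\int_{Q'\times I}W(\bar\xi_i+\nabla_\a\varphi_i|\lambda_i\nabla_3\varphi_i)\,dx\le\mathcal Q^*W(\bar\xi_i|b_i)+\eta$. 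The same averaging, using $\lambda_i\int\nabla_3\varphi_i=b_i$, gives $\lambda_t\int\nabla_3\psi=tb_1+(1-t)b_2=b_t$, so $\psi$ is admissible for $\mathcal Q^*W(\bar\xi_t|b_t)$ and the convexity inequality follows upon sending $\eta\to 0$.

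The main obstacle will be the geometric compatibility: making the strip widths $t/n,(1-t)/n$, the sub-cell sizes $\delta_i=\lambda_i/\lambda_t$, and the slope $\nu$ admit a simultaneous exact integer tiling of $Q'$ with $\psi\in W^{1,1}(Q'\times I;\Rb^3)$ genuinely $Q'$-periodic. I would address this by first approximating $\nu\in\mathbb S^1$ by rational directions for which $\Phi_n$ is $Q'$-periodic, then choosing $\lambda_t$ so large that $\delta_1,\delta_2$ are small and, after rationalizing the ratio $\lambda_1/\lambda_2$, commensurate with the $\nu$-strip widths. The residual mismatches near strip and cell boundaries would be absorbed into thin transition layers whose total volume is $o(1)$ as $\lambda_t\to\infty$; thanks to the linear growth bound (\ref{QastWgrowth}), their energy contribution is controlled linearly by their volume and vanishes in the limit. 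A final continuity argument in $\bar\xi$, based on (\ref{QastWgrowth}), would extend the inequality from rational directions to all $\nu\in\mathbb S^1$.
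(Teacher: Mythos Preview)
Your direct-construction approach is sound and is essentially the classical two-scale lamination argument for rank-one convexity of cell-formula envelopes. The paper, however, takes a shorter and structurally different route: rather than descending to the level of $W$ and building an explicit competitor $\psi$ for $\mathcal Q^*W(\overline\xi_t|b_t)$ out of near-optimal $(\lambda_i,\varphi_i)$, it invokes the sequential weak lower semicontinuity of the functional $(u,\overline b)\mapsto\int_{Q'}\mathcal Q^*W(\nabla_\a u|\overline b)\,dx_\a$ on $W^{1,p}(Q';\Rb^3)\times L^p(Q';\Rb^3)$ (established in \cite[Remark~1.4]{BFM}). One laminates only at the macroscopic level---a periodic sawtooth $u_n$ with in-plane gradient oscillating between $\overline\xi_1$ and $\overline\xi_2$, paired with $\overline b_n$ oscillating between $b_1$ and $b_2$---and the Riemann--Lebesgue Lemma identifies the weak limits as the convex combinations; lower semicontinuity then delivers the inequality for $\mathcal Q^*W$ directly. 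This completely bypasses the geometric compatibility issues (rational approximation of $\nu$, commensurability of $\delta_1,\delta_2$ with the strip widths, transition layers) that you rightly flag as the main obstacle in your construction. Your argument is more self-contained and does not depend on that external lower-semicontinuity lemma; the paper's trades that independence for brevity. The paper also records a third route, observing that $\mathcal Q^*W$ is $\mathcal A$-quasiconvex for $\mathcal A=({\rm curl},0)$ and appealing to \cite[Proposition~3.4]{FM2}.
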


\begin{proof}Let $b_1$, $b_2
\in \Rb^3$ and $\overline \xi_1,\;\overline \xi_2 \in \Rb^{3
\times 2}$ be such that $\overline \xi_2 - \overline \xi_1=z\otimes
\nu$ for some $z \in \Rb^3$ and $\nu \in \mathbb S^1$. Fix also
$\theta \in [0,1]$ and set
$$u(x_\a):=\left\{
\begin{array}{lll}
\overline \xi_1 x_\a + (x_\a \cdot \nu) z - (1-\theta)jz & \text{ if
} & j \in
\Zb \text{ and } j \leq x_\a \cdot \nu < j+\theta,\\
&&\\
\overline \xi_1 x_\a +(1+j)\theta z & \text{ if } & j \in \Zb \text{
and } j + \theta \leq x_\a \cdot \nu <j +1
\end{array}\right.$$
and $$A :=\{ x_\a \in \Rb^2: \text{ there exists }j \in \Zb \text{
such that }j \leq x_\a \cdot \nu < j+\theta\}.$$ Now define
$u_n(x_\a):= u(nx_\a) /n$ and $\overline b_n(x_\a):=\chi_A(nx_\a)\,
b_2 + (1-\chi_A(nx_\a))\, b_1$. Then, by the Riemann-Lebesgue Lemma,
$u_n \rightharpoonup (\theta \overline \xi_2 + (1-\theta)\overline
\xi_1)\,  x_\a$ in $W^{1,p}(Q';\Rb^3)$ and $\overline b_n
\rightharpoonup  \theta \, b_2 + (1-\theta)\, b_1$ in
$L^p(Q';\Rb^3)$ for every $p\geq 1$. Using the fact that the
functional
$$ (u,\overline b)  \mapsto
\int_{Q'} \mathcal Q^*W(\nabla_\a u |\overline  b)\, dx_\a$$ is
sequentially weakly lower semicontinuous in $W^{1,p}(Q';\Rb^3)
\times L^p(Q' ;\Rb^3)$ (see {\it e.g.} \cite[Remark 1.4]{BFM}), we
infer that
\begin{eqnarray*}
\mathcal Q^* W\big(\theta (\overline \xi_2|b_2) +
(1-\theta)(\overline \xi_1|b_1)\big) & \leq &
\liminf_{n\to +\infty} \int_{Q'}\mathcal Q^*W(\nabla u_n|\overline b_n)\, dx_\a\\
& = & \lim_{n \to +\infty} \int_{Q'} \big[\chi_A(nx_\a) \mathcal
Q^*W(\overline \xi_1 + z \otimes \nu |b_2)\\
&&\hspace{2cm} +(1-\chi_A(nx_\a)) \mathcal
Q^*W(\overline \xi_1|b_1) \big]\, dx_\a\\
& = & \theta \mathcal Q^*W(\overline \xi_2|b_2) + (1-\theta)\mathcal
Q^*W(\overline \xi_1|b_1),
\end{eqnarray*}
which is the desired result.
\end{proof}

We also remark that we could arrive at the same conclusion by
observing that the function $\mathcal Q^*W$ is ${\cal
A}$-quasiconvex (see \cite{FM2}, page 1369, Example (iii)) with
respect to the operator $\mathcal A:= ({\rm curl},0)$, where
$${\cal A}:( F|\ovb) \mapsto (\textrm{curl} F,
0)$$ with $F:\Rb^2 \to \Rb^{3 \times 2}$ and $b:\Rb^2 \to \Rb^3$.
Indeed, by virtue of \cite[Proposition 3.4]{FM2}, the function
$\mathcal Q^*W$ turns out to be convex in the directions $(z \otimes
\nu, b)$, with $z$, $b
\in \Rb^3$ and $\nu \in \mathbb S^1$.\\

The following result asserts that in the definition (\ref{QastW}) of
$\mathcal Q^* W$, one can replace the cube $Q'$ by any rotated cube
$Q'_\nu$.

\begin{Proposition}\label{formequiA*W}
Let $W:\Rb^{3 \times 3} \to [0,+\infty)$ be a Borel function
satisfying $(H_1)$, and assume that there exists a constant $L>0$
such that \begin{equation}\label{Wlip1}|W(\xi)-W(\xi')|\leq
L|\xi-\xi'|\quad \text{ for every $\xi$, $\xi' \in \Rb^{3 \times
3}$.}\end{equation} Then for every $\nu \in \mathbb S^1$, $\overline
\xi \in \Rb^{3 \times 2}$ and $b \in \Rb^3$,
\begin{eqnarray*}
{\cal Q}^\ast W(\overline \xi|b) & = & \inf_{\lambda, \, \varphi}
\left\{\int_{Q'_\nu \times I} W(\overline \xi + \nabla_\a
\varphi|\lambda \nabla_3 \varphi)\,dx: \lambda >0,\,
\varphi \in W^{1,1}(Q'_\nu \times I;\Rb^3),\right.\\
&&\left. \varphi(\cdot, x_3) \hbox{ is } Q'_\nu\hbox{-periodic for }
\LL^1 \hbox{-a.e. } x_3 \in I, \lambda \int_{Q'_\nu \times
I}\nabla_3 \varphi\, dy =b\right\}.
\end{eqnarray*}
\end{Proposition}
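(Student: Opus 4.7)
The plan is to prove both inequalities $\mathcal Q^*_\nu W(\overline \xi|b) \leq \mathcal Q^* W(\overline \xi|b)$ and the reverse, where $\mathcal Q^*_\nu W$ denotes the right-hand side of the claimed identity, by transplanting an admissible test pair for one problem into an admissible test pair for the other. Since $Q'$ and $Q'_\nu$ play fully symmetric roles -- both are unit cubes of $\Rb^2$ -- it is enough to establish the first inequality, the second following by applying the same argument with the two cubes interchanged.

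Fix $\delta > 0$ and an admissible pair $(\lambda,\varphi)$ for $\mathcal Q^* W(\overline \xi|b)$ with
$$\int_{Q' \times I} W(\overline \xi + \nabla_\a \varphi|\lambda \nabla_3 \varphi)\,dx \leq \mathcal Q^* W(\overline \xi|b) + \delta.$$
I would set $\varphi_n(x_\a,x_3) := \frac{1}{n}\varphi(nx_\a,x_3)$ and $\lambda_n := n\lambda$, so that $\nabla_\a \varphi_n(x_\a,x_3) = \nabla_\a \varphi(nx_\a,x_3)$, $\lambda_n \nabla_3 \varphi_n(x_\a,x_3) = \lambda \nabla_3 \varphi(nx_\a,x_3)$, and $\varphi_n$ is periodic with respect to the finer lattice $\frac{1}{n}\Zb^2$. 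To bridge the mismatch between this lattice and the cell $Q'_\nu$, I would pick a cut-off $\eta_n \in \C_0^\infty(Q'_\nu;[0,1])$ equal to $1$ on $(1-\delta_n)Q'_\nu$ with $\|\nabla \eta_n\|_\infty \leq C/\delta_n$, where $\delta_n := 1/\sqrt{n}$. Since multiplication by $\eta_n$ breaks the mean-value constraint, I would compensate by a constant correction and define
$$\psi_n(x_\a,x_3) := \eta_n(x_\a)\varphi_n(x_\a,x_3) + A_n\, x_3,\qquad A_n := \frac{b}{\lambda_n} - \int_{Q'_\nu \times I} \eta_n \nabla_3 \varphi_n\,dx.$$
Because $\eta_n \varphi_n$ vanishes near $\partial Q'_\nu$ and $A_n x_3$ does not depend on $x_\a$, the traces of $\psi_n$ on opposite faces of $Q'_\nu$ agree, so $\psi_n(\cdot,x_3)$ is $Q'_\nu$-periodic, and by the very definition of $A_n$ one has $\lambda_n \int_{Q'_\nu \times I} \nabla_3 \psi_n\,dx = b$; thus $(\lambda_n,\psi_n)$ is admissible for $\mathcal Q^*_\nu W(\overline \xi|b)$.

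The core step is to show that
$$\limsup_{n\to +\infty} \int_{Q'_\nu \times I} W(\overline \xi + \nabla_\a \psi_n|\lambda_n \nabla_3 \psi_n)\,dx \leq \int_{Q' \times I} W(\overline \xi + \nabla_\a \varphi|\lambda \nabla_3 \varphi)\,dx.$$
By $Q'$-periodicity one gets $\|\varphi_n\|_{L^1(Q'_\nu \times I)} = O(1/n)$, and a Riemann-Lebesgue mean-value argument applied to $\nabla_3 \varphi(n\,\cdot\,,\cdot)$ gives $\int_{Q'_\nu \times I} \eta_n \nabla_3 \varphi_n\,dx = b/(n\lambda) + o(1/n)$, so that $A_n = o(1/n)$ and $|\lambda_n A_n| = o(1)$. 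On the bulk set $\{\eta_n = 1\}$ the Lipschitz hypothesis \eqref{Wlip1} yields
$$\bigl|W(\overline \xi + \nabla_\a \psi_n|\lambda_n \nabla_3 \psi_n) - W(\overline \xi + \nabla_\a \varphi(n\,\cdot\,,\cdot)|\lambda \nabla_3 \varphi(n\,\cdot\,,\cdot))\bigr| \leq L|\lambda_n A_n|,$$
whose integral is $o(1)$, and then the change of variable $y_\a = n x_\a$ combined with the $Q'$-periodicity of $W(\overline \xi + \nabla_\a \varphi|\lambda \nabla_3 \varphi)$ and $|Q'_\nu|=|Q'|=1$ yields the correct limit $\int_{Q' \times I} W(\overline \xi + \nabla_\a \varphi|\lambda \nabla_3 \varphi)\,dy$. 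On the boundary strip $\{\eta_n \neq 1\}$, of measure $O(\delta_n)$, the growth bound in $(H_1)$ controls the integrand by a constant times $1 + |\nabla_\a \varphi(n\,\cdot\,,\cdot)| + |\varphi_n|/\delta_n + |\lambda \nabla_3 \varphi(n\,\cdot\,,\cdot)| + |\lambda_n A_n|$: each periodic term contributes $O(\delta_n)$ by the same averaging, while the commutator term is at most $\|\varphi_n\|_{L^1}/\delta_n = O(1/(n\delta_n)) = O(1/\sqrt{n})$. Passing to the limit then gives $\mathcal Q^*_\nu W \leq \mathcal Q^* W + \delta$, and letting $\delta \to 0^+$ and swapping the roles of $Q'$ and $Q'_\nu$ completes the proof.

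The main obstacle is the joint calibration of the two small scales: $\delta_n$ must be large enough to keep the commutator term $\|\varphi_n\|_{L^1}/\delta_n$ infinitesimal, yet small enough for the boundary-layer contribution $\beta\, O(\delta_n)$ to vanish. The choice $\delta_n = 1/\sqrt n$ works precisely because of the $L^1$-gain $\|\varphi_n\|_{L^1} = O(1/n)$ coming from the rescaling, and the replacement of $\lambda_n \nabla_3 \psi_n$ by $\lambda \nabla_3 \varphi(n\,\cdot\,,\cdot)$ on the bulk region rests on the Lipschitz hypothesis \eqref{Wlip1}, which is exactly the reason for this extra assumption in the statement.
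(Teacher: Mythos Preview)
Your argument is correct and follows the same strategy as the paper: rescale the periodic competitor, multiply by a cut-off supported in $Q'_\nu$, and add an affine-in-$x_3$ correction to restore the constraint $\lambda\int\nabla_3\psi=b$. The only structural difference is that the paper keeps the oscillation scale and the boundary-layer width independent (parameters $n$ and $k$), first sending $n\to\infty$ with $k$ fixed so that the Riemann--Lebesgue Lemma applies directly against the fixed $L^\infty$ function $\zeta_k$, and then letting $k\to\infty$; you instead couple the two via $\delta_n=n^{-1/2}$ and pass to the limit in one shot. Both work, but the decoupled version avoids having to justify Riemann--Lebesgue averaging against an $n$-dependent test function. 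On that point, your claim that ``each periodic term contributes $O(\delta_n)$'' on the boundary strip is a slight overstatement for merely $L^1$ periodic data: what one can prove (by truncation) is that $\int_{E_n\times I}|\nabla\varphi(n\cdot,\cdot)|\,dx\to 0$ whenever $|E_n|\to 0$, which gives $o(1)$ rather than a rate, but that is all you need.
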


\begin{proof}
Fix $\overline \xi \in \Rb^{3 \times 2}$ and $b \in \Rb^3$, and
define for every $\nu \in \mathbb S^1$,
\begin{eqnarray*}
I(\nu) & := & \inf_{\lambda, \, \varphi} \left\{\int_{Q'_\nu \times
I} W(\overline \xi + \nabla_\a \varphi|\lambda \nabla_3
\varphi)\,dx: \lambda >0,\,
\varphi \in W^{1,1}(Q'_\nu \times I;\Rb^3),\right.\\
&&\left. \varphi(\cdot, x_3) \hbox{ is } Q'_\nu\hbox{-periodic for }
\LL^1 \hbox{-a.e. } x_3 \in I, \lambda \int_{Q'_\nu \times
I}\nabla_3 \varphi\, dy =b\right\}.
\end{eqnarray*}
We shall prove that for any $\nu$ and $\nu' \in \mathbb S^1$, then
$I(\nu) \leq I(\nu')$. Interchanging the roles of $\nu$ and $\nu'$,
we will deduce that the inequality is actually an equality, and
taking $\nu'=e_2$ that $\mathcal Q^* W(\overline \xi|b)=I(\nu)$
which is the conclusion of the proposition.

Let $\lambda >0$ and $\varphi \in W^{1,1}(Q'_{\nu'} \times I;\Rb^3)$
be such that $\varphi(\cdot, x_3)$ is $Q'_{\nu'}$-periodic for
$\LL^1$-a.e. $x_3 \in I$ and $\lambda \int_{Q'_{\nu'} \times
I}\nabla_3 \varphi\, dy =b$. Extend $\varphi$ by
$Q'_{\nu'}$-periodicity to the whole $\Rb^2 \times I$ and set
$\varphi_n(x_\a,x_3):=\varphi(nx_\a,x_3)/n$. Consider also a cut-off
function $\zeta_k \in \C^\infty_c(Q'_{\nu};[0,1])$ satisfying
\begin{equation}\label{cutoffzeta}\left\{
\begin{array}{l}
\ds \zeta_k=1 \text{ on } Q'_\nu\left(0,1-\frac{1}{k}\right),\\[0.4cm]
\ds \|\nabla_\a \zeta_k\|_{L^\infty(Q'_\nu;\Rb^2)}\leq 2k^2.
\end{array}
\right.\end{equation} Define now
$$\psi_{n,k}(x_\a,x_3):=\varphi_n(x_\a,x_3)\zeta_k(x_\a) +
\frac{x_3}{\lambda n} \left[b - \lambda n \int_{Q'_\nu \times
I}\zeta_k(z_\a)\nabla_3\varphi_n(z_\a,z_3)\, dz \right].$$ It turns
out that $\psi_{n,k} \in W^{1,1}(Q'_\nu \times I;\Rb^3)$, that
$\psi_{n,k}(\cdot,x_3)$ is $Q'_\nu$-periodic for $\LL^1$-a.e. $x_3
\in I$ and that $\lambda n \int_{Q'_\nu \times
I}\nabla_3\psi_{n,k}\, dy =b$. Hence the pair $(\lambda
n,\psi_{n,k})$ is admissible for $I(\nu)$ and thus
$$I(\nu) \leq \int_{Q'_\nu \times I}W(\overline \xi
+\nabla_\a\psi_{n,k}|\lambda n \nabla_3 \psi_{n,k})\, dx.$$
Consequently, (\ref{cutoffzeta}) yields to
\begin{eqnarray*}
I(\nu) & \leq & \int_{Q'_\nu \big(0,1-\frac{1}{k}\big) \times
I}W\left(\overline \xi +\nabla_\a\varphi_n \Big|\lambda n \nabla_3
\varphi_n + b - \lambda n
\int_{Q'_{\nu} \times I}\zeta_k(z_\a) \nabla_3\varphi_n(z_\a,z_3)\, dz \right) dx\\
&& + \int_{\Big(Q'_\nu \setminus
Q'_\nu\big(0,1-\frac{1}{k}\big)\Big) \times I}W(\overline \xi
+\nabla_\a\psi_{n,k}|\lambda n \nabla_3 \psi_{n,k})\, dx
\end{eqnarray*}
and using the growth condition $(H_1)$ together with the Lipschtiz
property (\ref{Wlip1}) of $W$, we get that
\begin{eqnarray*}
I(\nu) & \leq & \int_{Q'_\nu \times I}W\big(\overline \xi
+\nabla_\a\varphi(nx_\a,x_3) |\lambda \nabla_3
\varphi(nx_\a,x_3)\big)\, dx\\
&& + \b \int_{\Big(Q'_\nu \setminus
Q'_\nu\big(0,1-\frac{1}{k}\big)\Big) \times I}\big( 1+|\overline
\xi| + |\nabla_\a \varphi(nx_\a,x_3)|+\lambda|\nabla_3
\varphi(nx_\a,x_3)| + 2k^2 |\varphi_n(x)|\big)\, dx\\
 && +L\left|\; b - \lambda \int_{Q'_{\nu} \times I}\zeta_k(z_\a)
\nabla_3\varphi(n z_\a,z_3)\, dz \right|.
\end{eqnarray*}
Applying the Riemann-Lebesgue Lemma and the fact that $\varphi_n \to
0$ in $L^1(Q'_\nu \times I;\Rb^3)$, it implies, sending $n \to
+\infty$, that
\begin{eqnarray*}
I(\nu) & \leq & \int_{Q'_{\nu'} \times I}W\big(\overline \xi
+\nabla_\a\varphi(y) |\lambda \nabla_3
\varphi(y)\big)\, dy\\
&& + \b \left[1 -\left(1-\frac{1}{k}\right)^2  \right]
\int_{Q'_{\nu'} \times I}\big(1+|\overline \xi|
+|\nabla_\a\varphi(y)|+\lambda |\nabla_3 \varphi(y)|\big)\, dy\\
&& +L \left|\; b - \left(\lambda \int_{Q'_{\nu'} \times I}
\nabla_3\varphi(z)\, dz\right)\int_{Q'_\nu}\zeta_k(y_\a)\, dy_\a
\right|.
\end{eqnarray*}
As $\lambda \int_{Q'_{\nu'} \times I} \nabla_3\varphi(z)\, dz=b$ and
$\zeta_k \to 1$ in $L^1(Q'_\nu)$, we obtain letting $k \to +\infty$
that
$$I(\nu) \leq \int_{Q'_{\nu'} \times I}W\big(\overline \xi
+\nabla_\a\varphi(y) |\lambda \nabla_3 \varphi(y)\big)\, dy.$$
Taking the infimum over all pairs $(\lambda,\varphi)$ as above
implies that $I(\nu) \leq I(\nu')$ which is the desired result.
\end{proof}

\subsection{The surface energy density}

\noindent Let $W^\infty$ (resp. $(\mathcal Q^*W)^\infty$) be the
recession function of $W$ (resp. $\mathcal Q^* W$) defined by
$$W^\infty(\xi):= \limsup_{t \to +\infty}\frac{W(t\xi)}{t} \quad \left(\text{resp. }
(\mathcal Q^*W)^\infty(\xi):= \limsup_{t \to +\infty}\frac{\mathcal
Q^* W(t\xi)}{t}\right)$$ for every $\xi \in \Rb^{3 \times 3}$.

Let $(z,b,\nu) \in \Rb^3 \times \Rb^3 \times \mathbb S^1$ and
consider $\tau \in \mathbb S^1$ such that $(\tau,\nu)$ is an
orthonormal basis of $\Rb^2$. Define the auxiliary surface energy
$\g : \Rb^3 \times \Rb^3 \times\mathbb S^1 \to [0,+\infty)$ by
\begin{eqnarray}\label{surfnrj}
\g(z,\nu,b)&:=& \inf_{\lambda,\varphi} \Bigg\{\int_{Q'_\nu \times I}
W^\infty(\nabla_\a \varphi|\lambda \nabla_3 \varphi)\, dx : \lambda
>0, \, \varphi \in W^{1,1}(Q'_\nu \times I;\Rb^3),\,\varphi^{+\nu} - \varphi^{-\nu} = z,\nonumber\\
&&\varphi \text{ is }1\text{-periodic in the direction }\tau \text{
and } \lambda\int_{Q'_\nu \times I}\nabla_3\varphi\, dy=b\Bigg\},
\end{eqnarray}
where $\varphi^{\pm\nu}$ stands for the trace of $\varphi$ on the
face $\{(x_\a,x_3) \in Q'_\nu: x_\a \cdot \nu =\pm 1/2\}$. This
density will naturally appear in the proof of the lower bound of the
jump part. However, arguing as in \cite{AFP} page 313, one can
observe that $\g$ actually coincides with $(\mathcal Q^\ast
W)^\infty$.

\begin{Proposition}\label{denssurf}
Let $W:\Rb^{3 \times 3} \to [0,+\infty)$ be a Borel function
satisfying $(H_1)$, $(H_2)$ and (\ref{Wlip1}). Then for every $z$,
$b \in \Rb^3$ and $\nu \in \mathbb S^1$, we have
$$\g(z,\nu,b) = (\mathcal Q^* W)^\infty(z \otimes \nu|b)=\mathcal Q^*(W^\infty)(z\otimes \nu|b).$$
\end{Proposition}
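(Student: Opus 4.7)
The plan is to split the proposition into the two equalities $\g(z,\nu,b)=\mathcal{Q}^\ast(W^\infty)(z\otimes\nu|b)$ and $\mathcal{Q}^\ast(W^\infty)(z\otimes\nu|b)=(\mathcal{Q}^\ast W)^\infty(z\otimes\nu|b)$, treated in that order. As a preliminary I would observe that $W^\infty$ inherits both the growth $\beta'|\xi|\leq W^\infty(\xi)\leq\beta|\xi|$ of $(H_1)$ and the Lipschitz property $|W^\infty(\xi)-W^\infty(\xi')|\leq L|\xi-\xi'|$ from \eqref{Wlip1}, the latter by passing to the limit in $[W(t\xi)-W(t\xi')]/t$. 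Consequently Proposition \ref{formequiA*W} applies to both $W$ and $W^\infty$, so that inside either $\mathcal{Q}^\ast W$ or $\mathcal{Q}^\ast(W^\infty)$ the unit cube $Q'$ may be replaced by any rotated cube $Q'_\nu$.

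For the first equality I would exhibit an explicit bijection between admissible classes. Given $(\lambda,\varphi)$ admissible for $\g(z,\nu,b)$, set
$$\tilde\varphi(x_\a,x_3):=\varphi(x_\a,x_3)-\bigl(x_\a\cdot\nu+\tfrac{1}{2}\bigr)z.$$
The subtraction exactly absorbs the prescribed jump $\varphi^{+\nu}-\varphi^{-\nu}=z$, so $\tilde\varphi$ becomes $Q'_\nu$-periodic in $x_\a$ (the $\tau$-periodicity survives because $\tau\cdot\nu=0$). Moreover $\nabla_\a\tilde\varphi=\nabla_\a\varphi-z\otimes\nu$ and $\nabla_3\tilde\varphi=\nabla_3\varphi$, so the constraint $\lambda\int\nabla_3\tilde\varphi=b$ is preserved and
$$W^\infty(\nabla_\a\varphi|\lambda\nabla_3\varphi)=W^\infty(z\otimes\nu+\nabla_\a\tilde\varphi|\lambda\nabla_3\tilde\varphi).$$
Thus $(\lambda,\tilde\varphi)$ is admissible for the $Q'_\nu$-version of $\mathcal{Q}^\ast(W^\infty)(z\otimes\nu|b)$ with the same energy; the inverse map $\tilde\varphi\mapsto\tilde\varphi+(x_\a\cdot\nu+\tfrac{1}{2})z$ supplies the reverse direction, and taking infima yields the first equality.

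For the second equality, Proposition \ref{convex} gives convexity of $t\mapsto\mathcal{Q}^\ast W(tz\otimes\nu|tb)$, hence the quotient $[\mathcal{Q}^\ast W(tz\otimes\nu|tb)-\mathcal{Q}^\ast W(0|0)]/t$ is non-decreasing and, being bounded above by \eqref{QastWgrowth}, converges as $t\to+\infty$; in particular $(\mathcal{Q}^\ast W)^\infty(z\otimes\nu|b)$ is a genuine limit. Rescaling $\varphi=t\tilde\varphi$ in the definition of $\mathcal{Q}^\ast W(tz\otimes\nu|tb)$ gives
$$\frac{1}{t}\mathcal{Q}^\ast W(tz\otimes\nu|tb)=\inf_{(\lambda,\tilde\varphi)}\int_{Q'\times I}\frac{W(t(z\otimes\nu+\nabla_\a\tilde\varphi|\lambda\nabla_3\tilde\varphi))}{t}\,dx,$$
where the infimum runs over the admissible class for $\mathcal{Q}^\ast(W^\infty)(z\otimes\nu|b)$. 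Hypothesis $(H_2)$ yields $|W(t\xi)/t-W^\infty(\xi)|\leq C/t+C|\xi|^{1-r}/t^r$, which after integration (using $|\xi|^{1-r}\leq 1+|\xi|$) contributes an error $O(1/t)+O(\|\nabla\tilde\varphi\|_{L^1}/t^r)$. Plugging in any fixed admissible $\tilde\varphi$ gives $\limsup_t\leq\mathcal{Q}^\ast(W^\infty)(z\otimes\nu|b)$. For the reverse direction I would take a near-minimizing sequence $(\lambda_t,\tilde\varphi_t)$; the a priori upper bound \eqref{QastWgrowth} together with the coercivity in $(H_1)$ controls $\|\nabla_\a\tilde\varphi_t\|_{L^1}$ and $\lambda_t\|\nabla_3\tilde\varphi_t\|_{L^1}$ uniformly in $t$, and the same error estimate then forces $\liminf_t\geq\mathcal{Q}^\ast(W^\infty)(z\otimes\nu|b)$. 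The main obstacle is precisely this uniform control of the $(H_2)$-error along a minimizing sequence; everything else reduces to bookkeeping for the change of variables.
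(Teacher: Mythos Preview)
Your proof is correct and follows essentially the same two-step structure as the paper: the affine change of variables $\varphi\leftrightarrow\varphi\pm(x_\a\cdot\nu)z$ for the identity $\gamma=\mathcal{Q}^\ast(W^\infty)$, and the comparison of $W(t\cdot)/t$ with $W^\infty$ along (near-)minimizers for $\mathcal{Q}^\ast(W^\infty)=(\mathcal{Q}^\ast W)^\infty$. The only notable difference is that for the inequality $(\mathcal{Q}^\ast W)^\infty\leq\mathcal{Q}^\ast(W^\infty)$ the paper invokes the reverse Fatou lemma rather than $(H_2)$, and your appeal to Proposition~\ref{convex} to upgrade the $\limsup$ to a limit, while correct, is not actually needed for the argument.
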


\begin{proof}
The proof is divided into two steps. Firstly we shall prove that
$\g(z,\nu,b)=\mathcal Q^*(W^\infty)(z \otimes \nu|b)$ and then that
$\mathcal Q^*(W^\infty)(z \otimes \nu|b)=(\mathcal Q^* W)^\infty(z
\otimes \nu|b)$.

{\bf Step 1. } Let $\lambda>0$ and $\psi \in W^{1,1}(Q'_\nu \times
I;\Rb^3)$ be such that $\psi(\cdot,x_3)$ is $Q'_\nu$-periodic for
$\LL^1$-a.e. $x_3 \in I$ and $\lambda\int_{Q'_\nu \times
I}\nabla_3\psi\, dy=b$. Define
$$\varphi(x_\a,x_3):=(x_\a \cdot \nu)z + \psi(x_\a,x_3)\quad \text{ for every }(x_\a,x_3) \in Q'_\nu \times I.$$
Clearly $\varphi \in W^{1,1}(Q'_\nu \times I;\Rb^3)$, $\varphi$ is
$1$-periodic in the direction $\tau$ and $\varphi^{+\nu} -
\varphi^{-\nu} =z$. Moreover, we have that $\lambda\int_{Q'_\nu
\times I}\nabla_3\varphi\, dy=\lambda\int_{Q'_\nu \times
I}\nabla_3\psi\, dy=b$. Thus, by (\ref{surfnrj}), $\varphi$ is
admissible for $\g(z,\nu,b)$ and consequently
$$\g(z,\nu,b) \leq \int_{Q'_\nu \times I} W^\infty (\nabla_\a \varphi|\lambda\nabla_3 \varphi)\, dx=
\int_{Q'_\nu \times I} W^\infty (z\otimes \nu + \nabla_\a
\psi|\lambda\nabla_3 \psi)\, dx.$$ Taking the infimum over all such
$(\lambda,\psi)$, and using Proposition \ref{formequiA*W} yields
$\g(z,\nu,b) \leq \mathcal Q^*(W^\infty)(z \otimes \nu|b)$.

Conversely, consider $\lambda>0$ and $\varphi \in W^{1,1}(Q'_\nu
\times I;\Rb^3)$ such that $\varphi$ is $1$-periodic in the
direction $\tau$, $\varphi^{+\nu} - \varphi^{-\nu} = z$ and
$\lambda\int_{Q'_\nu \times I}\nabla_3\varphi\, dy=b$. Define
$$\psi(x_\a,x_3):=- (x_\a \cdot \nu)z + \varphi(x_\a,x_3 \quad \text{ for every }(x_\a,x_3) \in Q'_\nu \times I.$$
Then $\psi \in W^{1,1}(Q'_\nu \times I;\Rb^3)$, $\psi$ is
$1$-periodic in the direction $\tau$. Moreover noticing that
$\psi^{+\nu}- \psi^{-\nu}=0$, it implies that $\psi$ is actually
$Q'_\nu$-periodic. As $\lambda\int_{Q'_\nu \times I}\nabla_3\psi\,
dy=\lambda\int_{Q'_\nu \times I}\nabla_3\varphi\, dy=b$ it follows
that $\psi$ is admissible for $\mathcal Q^*(W^\infty)(z \otimes
\nu|b)$ and consequently
$$\mathcal Q^*(W^\infty)(z \otimes \nu|b) \leq \int_{Q'_\nu \times I} W^\infty (z \otimes \nu + \nabla_\a
\psi|\lambda\nabla_3 \psi)\, dx= \int_{Q'_\nu \times I} W^\infty
(\nabla_\a \varphi|\lambda\nabla_3 \varphi)\, dx.$$ By the
arbitrariness of $(\lambda,\psi)$, it yields $\mathcal
Q^*(W^\infty)(z \otimes \nu|b) \leq \g(z,\nu,b)$ and it completes the proof of the first step.\\

{\bf Step 2. }Now take any pair $(\lambda,\varphi)$ where
$\lambda>0$ and $\varphi \in W^{1,1}(Q' \times I;\Rb^3)$ is such
that $\varphi(\cdot,x_3)$ is $Q'$-periodic and $\lambda\int_{Q'
\times I}\nabla_3\varphi\, dy=b$. Then
$$\frac{\mathcal Q^*W(t(z \otimes \nu|b))}{t} \leq \int_{Q' \times I}
\frac{W(t z \otimes \nu + \nabla_\a (t\varphi)|\lambda
\nabla_3(t\varphi))}{t}\, dx,$$ and by the growth condition $(H_1)$,
we have for $t>1$,
$$\frac{W(t z \otimes \nu + t \nabla_\a \varphi|\lambda t
\nabla_3\varphi)}{t} \leq \b(1+|z| + |\nabla_\a\varphi| + \lambda
|\nabla_3 \varphi|) \in L^1(Q' \times I).$$ Hence by the limsup
version of Fatou's Lemma, it follows that
\begin{eqnarray*}
(\mathcal Q^*W)^\infty(z\otimes \nu|b) & = & \limsup_{t \to
+\infty}\frac{\mathcal Q^*W(t(z \otimes \nu|b))}{t} \leq \limsup_{t
\to +\infty}\int_{Q' \times I} \frac{W(t z \otimes
\nu + t \nabla_\a \varphi|\lambda t \nabla_3\varphi)}{t}\, dx\\
& \leq &\int_{Q' \times I}\limsup_{t \to +\infty} \frac{W(t z
\otimes \nu + t \nabla_\a \varphi|\lambda t \nabla_3\varphi)}{t}\,
dx = \int_{Q' \times I}W^\infty(z \otimes \nu + \nabla_\a
\varphi|\lambda \nabla_3\varphi)\, dx.
\end{eqnarray*}
Finally taking the infimum over all $(\lambda,\varphi)$ as before,
we obtain that $(\mathcal Q^*W)^\infty(z\otimes \nu|b) \leq \mathcal
Q^*(W^\infty)(z\otimes \nu|b)$.

To prove the converse inequality, for any $t >1$, let $\lambda_t>0$
and $\varphi_t \in W^{1,1}(Q' \times I;\Rb^3)$ be such that
$\varphi_t(\cdot,x_3)$ is $Q'$-periodic for $\LL^1$-a.e. $x_3 \in
I$, $\lambda_t \int_I \nabla_3 \varphi_t\, dy=b$ and
\begin{equation}\label{1910}\int_{Q' \times I}W(t z \otimes \nu + t \nabla_\a
\varphi_t|t\lambda_t \nabla_3 \varphi_t)\,dx \leq \mathcal Q^* W(t(z
\otimes \nu|b) ) +1.\end{equation} By the growth and coercivity
properties $(H_1)$ and (\ref{QastWgrowth}), it turns out that
\begin{equation}\label{1909}\| (\nabla_\a \varphi_t |\lambda_t \nabla_3
\varphi_t)\|_{L^1(Q' \times I;\Rb^{3 \times 3})} \leq
C,\end{equation} for some constant $C>0$ independent of $t$. Hence
using $(H_2)$ and the fact that $W^\infty$ is positively
$1$-homogeneous, it follows that
\begin{eqnarray*}
\mathcal Q^*(W^\infty)(z\otimes \nu|b) & \leq & \int_{Q' \times
I}W^\infty(z \otimes \nu + \nabla_\a \varphi_t|\lambda_t \nabla_3
\varphi_t)\,dx\\
& \leq & \int_{Q' \times I}\frac{W(t z \otimes \nu + t \nabla_\a
\varphi_t|t\lambda_t \nabla_3
\varphi_t)}{t}\,dx\\
&&+\frac{C}{t}\int_{Q' \times I} (1 + t^{1-r}|z|^{1-r} +
t^{1-r}|(\nabla_\a \varphi_t |\lambda_t \nabla_3
\varphi_t)|^{1-r})\,dx.\end{eqnarray*} From H\"older's Inequality
together with (\ref{1910}) and (\ref{1909}), it yields $$\mathcal
Q^*(W^\infty)(z\otimes \nu|b) \leq \frac{\mathcal Q^* W(t(z\otimes
\nu|b))}{t} + \frac{C}{t}+\frac{C}{t^r}.$$ Finally, taking the
limsup as $t\to +\infty$ leads to $\mathcal Q^*(W^\infty)(z\otimes
\nu|b) \leq (\mathcal Q^* W)^\infty(z\otimes \nu|b)$ which concludes
the proof of the second step and of the proposition.
\end{proof}

\section{Properties of the $\G$-limit}

\noindent We start by localizing the functionals on $\A_0$, the
family of all bounded open subsets of $\Rb^2$. Let $J_\e:
BV(\Rb^3;\Rb^3)\times \M(\Rb^2;\Rb^3)\times \A_0 \to [0,+\infty]$ be
defined by
\begin{equation}\label{energies}
J_\e(u,\ovb,A):=\left\{
\begin{array}{ll}
\ds \int_{A \times I} W\left(\nabla_\a u \Big|\frac{1}{\e} \nabla_3
u\right)dx & \text{ if } \left\{
\begin{array}{l}
u \in W^{1,1}(A \times I;\Rb^3),\\
\ovb=\frac{1}{\e}\int_I \nabla_3 u(\cdot,x_3)\, dx_3,
\end{array}
\right.\\[0.5cm] +\infty & \text{ otherwise}.
\end{array}
\right.
\end{equation}
In the sequel, we will also use the family $\A(\o)$ of all open
subsets of $\o$. For every sequence $\{\e_j\} \searrow 0^+$ define
the $\G$-lower limit of $J_{\e_j}$ given by
$$J_{\{\e_j\}}(u,\ovb,A):=\inf_{\{u_j,\ovb_j\}}\left\{\liminf_{j \to +\infty}
J_{\e_j}(u_j,\ovb_j,A): u_j \xrightharpoonup[]{*}u \text{ in } BV(A
\times I;\Rb^3), \, \ovb_j \xrightharpoonup[]{*}\ovb \text{ in }
\M(A;\Rb^3) \right\}.$$ In order to show that the family $\{J_\e\}$
$\G$-converges to the functional $E$, it is enough to prove that for
every sequence $\{\e_j\} \searrow 0^+$, there exists a further
subsequence $\{\e_{j_n}\}$ such that
$J_{\{\e_{j_n}\}}(u,\ovb,\o)=E(u,\ovb)$ for any $(u,\ovb) \in
BV(\o;\Rb^3) \times \M(\o;\Rb^3)$.

It is easily seen from the coercivity condition $(H_1)$ that if
$J_{\{\e_j\}}(u,\ovb,\o)<+\infty$, then necessarily $D_3 u =0$ so
that $u$ (may be identified to a function which) belongs to
$BV(\o;\Rb^3)$. Thus it suffices to consider $(u,\ovb)\in
BV(\o;\Rb^3) \times \M(\o;\Rb^3)$ in which case we have that
\begin{eqnarray}\label{gammalower_limit}
J_{\{\e_j\}}(u,\ovb, A) & = & \inf_{\{u_j\}} \left\{\liminf_{j \to
+\infty} \int_{A \times I}
W\left(\nabla_\a u_j\Big|\frac{1}{\e_j} \nabla_3 u_j\right) dx: \{u_j\} \subset W^{1,1}(A \times I;\Rb^3)\right.\nonumber\\
&& \left. u_j\xrightharpoonup[]{*} u \hbox{ in } BV(A \times I;
\Rb^3),\, \frac{1}{\e_j} \int_I \nabla_3 u_j(\cdot,x_3)\, dx_3
\xrightharpoonup[]{*} \ovb \hbox{ in } \M(A;\Rb^3)\right\}.
\end{eqnarray}
Note that thanks to the coercivity condition $(H_1)$, the weak*
convergence in $BV(A \times I;\Rb^3)$ in (\ref{gammalower_limit}) is
equivalent to the strong convergence in $L^1(A \times I;\Rb^3)$.

It is expected, as in most variational problems in $BV$ (see
\cite{FM}), that the $\G$-limit should be the sum of three terms
relative to the decomposition of the gradient $D_\a u$ of a function
$u\in BV(\o;\Rb^3)$ into bulk, jump and Cantor parts. In the present
study, there will be a fourth one which comes from the presence of
the bending moment $\ovb$, and which is due to the fact that
$\overline b$ may be singular with respect to $D_\a u$. There is no
hope to avoid this so called singular term as the following example
shows.

\begin{Example}\label{ex}
{\rm There exist $(u,\ovb) \in BV(\o;\Rb^3) \times \M(\o;\Rb^3)$ and
a sequence $\{u_\e\} \subset W^{1,1}(\O;\Rb^3)$ such that $u_\e
\xrightharpoonup[]{*} u$ in $BV(\O;\Rb^3)$,
$\frac{1}{\e}\int_I\nabla_3 u_\e(\cdot,x_3)\, dx_3
\xrightharpoonup[]{*} \ovb$ in $\M(\o;\Rb^3)$ where the measures
$D_\a u$ and $\ovb$ are mutually singular.

For simplicity, we construct an example for scalar valued functions.
Consider a nonnegative radial function $\varrho \in
\C^\infty_c(\Rb^3)$ such that ${\rm Supp}(\varrho) \subset B(0,1/2)$
and $\int_{\Rb^3} \varrho(x)\, dx=1$, and set
$\varphi(x_\a,x_3):=\int_{-1/2}^{x_3}\varrho(x_\a,s)\, ds$. Assume
that $\o$ contains the origin and define $u_\e \in W^{1,1}(\O)$ by
$$u_\e(x_\a,x_3):=u(x_\a)+\frac{1}{\e}
\varphi\left(\frac{x}{\e}\right),$$ where $u \in W^{1,1}(\o)$. Then,
by a change of variables, we have
$$\|u_\e-u\|_{L^1(\O)} \leq \e, \quad \|\nabla u_\e\|_{L^1(\O;\Rb^3)} \leq
\|\nabla u\|_{L^1(\o;\Rb^3)} + \e \|\nabla
\varphi\|_{L^1(\O;\Rb^3)}$$ so that $u_\e \rightharpoonup u$ in
$W^{1,1}(\O)$ (and thus also weakly* in $BV(\O)$). On the other
hand, we have that
$$\frac{1}{\e} \nabla_3
u_\e(x)=\frac{1}{\e^3}\varrho\left(\frac{x}{\e}\right)$$ and
consequently, $\frac{1}{\e}\int_I \nabla_3 u_\e(\cdot,x_3)\, dx_3
\xrightharpoonup[]{*} \delta$ in $\M(\o)$, where $\delta$ is the
Dirac mass at $0 \in \Rb^2$, which is singular with respect to $D_\a
u=\nabla_\a u \LL^2$. }
\end{Example}

\begin{Remark}\label{sobolevcase}
{\rm In \cite[Theorem 1.2]{BFM}, it has been shown that
$$J_{\{\e_j\}}(u,\ovb,\o)=\int_\o \mathcal Q^* W(\nabla_\a u|\ovb)\, dx_\a=E(u,\ovb),$$
for $u \in W^{1,p}(\o;\Rb^3)$ and $\ovb \in L^p(\o;\Rb^3)$ with
$p>1$. An analogous argument ensures that the same representation
result holds when  $u \in W^{1,1}(\o;\Rb^3)$ and $\ovb \in
L^1(\o;\Rb^3)$.}
\end{Remark}

\begin{Remark}
{\rm Arguing exactly as in \cite[Lemma 2.3]{BFM}, one can show that
$J_{\{\e_j\}}$ remains unchanged if we replace $W$ by its
quasiconvexification $\mathcal Q W$ in (\ref{energies}). Hence using
(\ref{1.8BFM}), upon replacing $W$ by $\mathcal QW$, we may assume
without loss of generality that $W$ is quasiconvex. Then, by the
growth condition $(H_1)$ and {\it e.g.} the proof of Theorem 2.1,
Step 2 in \cite{M}, there exists a constant $L>0$ such that
\begin{equation}\label{Wlip}
|W(\xi)-W(\xi')|\leq L|\xi-\xi'|,
\end{equation}
for every $\xi$ and $\xi' \in \Rb^{3 \times 3}$. As a consequence,
$W^\infty$ is Lipschitz continuous as well and
\begin{equation}\label{Winftylip}
|W^\infty(\xi)-W^\infty(\xi')|\leq L |\xi-\xi'|.
\end{equation}
}
\end{Remark}

Let $\R_0$ be the countable subfamily of $\A_0$ obtained by taking
all finite unions of open squares in $\Rb^2$ with faces parallel to
the axes, centered at $x_\a \in \mathbb Q^2$, and with rational edge
length. Since $\M(\o; \Rb^3)$ and $BV(\O;\Rb^3)$ are the duals of
separable spaces (see {\it e.g.} \cite[Remark 3.12]{AFP}), the
adopted weak* topologies in (\ref{gammalower_limit}), and their
metrizability on bounded sets, ensure the applicability of
Kuratowsky's Compactness Theorem (we refer to {\it e.g.}
\cite[Corollary 8.12]{DM} for the weak topology of a Banach space
with separable dual; it can be checked that a similar result holds
for the weak* topology of a Banach space which is the dual of a
separable one). Thus, through a diagonal argument, it guarantees the
existence of a subsequence $\{\e_n\} \equiv \{\e_{j_n}\}$ of
$\{\e_j\}$ such that $J_{\{\e_n\}}(u,\ovb,A)$ is the $\G$-limit of
$J_{\e_n}(u,\ovb,A)$ for all $A \in \R_0$ (and also $A=\o$) and all
$(u,\ovb)$ in $BV(A;\Rb^3)\times \M(A;\Rb^3)$.

\begin{Lemma}\label{1557}
Let $\o \subset \Rb^2$ be a bounded open set and let $A
\subset\subset \o$ be an open subset of $\o$ with Lipschitz
boundary. For every  $(u,\ovb) \in BV(\o;\Rb^3) \times \M(\o;\Rb^3)$
satisfying $|\ovb|(\partial A)=0$, there exists a sequence $\{v_n\}
\subset W^{1,1}(A \times I;\Rb^3)$ such that $$\left\{
\begin{array}{l}
v_n \to u \text{ in } L^1(A \times I;\Rb^3),\\[0.2cm]
\frac{1}{\e_n} \int_I \nabla_3 v_n(\cdot,x_3)\, dx_3
\xrightharpoonup[]{*}
\ovb \text{ in }\M(A;\Rb^3),\\[0.2cm]
Tv_n=Tu \text{ on }\partial A \times I,\\[0.2cm]
|D_\a v_n|(A \times I) \to |D_\a u|(A),\\[0.2cm]
\frac{1}{\e_n} |D_3 v_n|(A \times I) \to |\ovb|(A).
\end{array}\right.$$
\end{Lemma}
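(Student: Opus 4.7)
The plan is a two-parameter approximation followed by a diagonal extraction. I would first approximate $u$ and $\ovb$ separately by smooth objects, then combine them through the explicit ansatz
$$v_{n,k}(x_\a,x_3) := u^k(x_\a) + \e_n x_3\, \ovb^k(x_\a),$$
send $n\to\infty$ for fixed $k$, and finally let $k\to\infty$ through diagonalization.

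For the approximation of $u$, I would invoke the classical trace-preserving strict density result for $BV$ functions on Lipschitz domains: there exists $u^k \in W^{1,1}(A;\Rb^3)\cap \C^\infty(A;\Rb^3)$ with $Tu^k=Tu$ on $\partial A$, $u^k\to u$ in $L^1(A;\Rb^3)$, and $\|\nabla_\a u^k\|_{L^1(A;\Rb^{3\times 2})}\to |D_\a u|(A)$. For $\ovb$, using the hypothesis $|\ovb|(\partial A)=0$, a cutoff supported strictly inside $A$ combined with a standard mollification produces $\ovb^k\in \C^\infty_c(A;\Rb^3)$ such that $\ovb^k\LL^2\xrightharpoonup[]{*}\ovb$ in $\M(A;\Rb^3)$ and $\|\ovb^k\|_{L^1(A;\Rb^3)}\to |\ovb|(A)$ (the vanishing of $|\ovb|$ on $\partial A$ is precisely what allows $\|\ovb^k\|_{L^1}$ to reach $|\ovb|(A)$ and not just bound it from below).

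With this ansatz, the five verifications are direct. Because $\ovb^k$ is compactly supported in $A$, the identity $T v_{n,k}=T u^k=T u$ on $\partial A\times I$ holds for every $n$ and $k$. A direct computation gives $\nabla_3 v_{n,k}=\e_n\ovb^k$, hence
$$\frac{1}{\e_n}\int_I\nabla_3 v_{n,k}(\cdot,x_3)\,dx_3=\ovb^k \qquad \text{and} \qquad \frac{1}{\e_n}|D_3 v_{n,k}|(A\times I)=\|\ovb^k\|_{L^1(A;\Rb^3)},$$
both independent of $n$. For the in-plane derivative, $\nabla_\a v_{n,k}=\nabla_\a u^k+\e_n x_3\nabla_\a \ovb^k\to\nabla_\a u^k$ in $L^1(A\times I;\Rb^{3\times 2})$ as $n\to\infty$ by dominated convergence, so $|D_\a v_{n,k}|(A\times I)\to\|\nabla_\a u^k\|_{L^1(A;\Rb^{3\times 2})}$; likewise $\|v_{n,k}-u^k\|_{L^1(A\times I;\Rb^3)}\leq \frac{\e_n}{2}\|\ovb^k\|_{L^1(A;\Rb^3)}\to 0$. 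Letting then $k\to\infty$, the properties of the approximating sequences for $u$ and $\ovb$ guarantee that each of the five target quantities converges to its prescribed limit, so a standard diagonal extraction (Attouch's lemma) delivers $k(n)\to\infty$ such that $v_n:=v_{n,k(n)}$ satisfies all the required convergences.

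The only genuinely nontrivial ingredient is the trace-preserving strict density theorem for $BV$ on Lipschitz domains. The remainder is bookkeeping: the explicit ansatz decouples the in-plane and transverse behaviors, the corrector $\e_n x_3\ovb^k$ carries the exact prescribed transverse derivative while its $L^1$ size and in-plane variation contribute only $O(\e_n)$, and the compact support of $\ovb^k$ inside $A$ ensures that the lateral trace is left untouched.
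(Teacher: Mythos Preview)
Your proposal is correct and follows essentially the same approach as the paper's own proof: both use the trace-preserving strict approximation of $u$ by $W^{1,1}$ functions on the Lipschitz domain $A$, approximate $\ovb$ by $\ovb_k\in\C^\infty_c(A;\Rb^3)$ exploiting $|\ovb|(\partial A)=0$, combine them via the ansatz $u$-approximant $+\,\e_n x_3\,\ovb_k$, and conclude by diagonalization using the separability of $\C_0(A;\Rb^3)$. The only cosmetic difference is the indexing (the paper indexes the $BV$ approximation by $n$ rather than by $k$) and the precise construction of $\ovb_k$ (mollification followed by $L^1$-approximation in $\C^\infty_c$ versus your cutoff-plus-mollification), neither of which affects the argument.
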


\begin{proof}By \cite[Lemma
2.5]{BouFonMas}, there exists a sequence $\{\tilde v_n\} \subset
W^{1,1}(A;\Rb^3)$ such that $\tilde v_n \to u$ in $L^1(A;\Rb^3)$,
$|D_\a \tilde v_n|(A) \to |D_\a u|(A)$ and $T\tilde v_n = Tu$ on
$\partial A$. Consider a usual sequence of mollifiers denoted by
$\{\varrho_k\}$. Then from \cite[Theorem 2.2]{AFP}, we have that
$\ovb
* \varrho_k \xrightharpoonup[]{*} \ovb$ in $\M_{\rm
loc}(\o;\Rb^3)$ and thus
\begin{equation}\label{1141}\ovb
* \varrho_k \xrightharpoonup[]{*} \ovb \text{ in } \M(A;\Rb^3).
\end{equation}
Moreover, since $|\ovb|(\partial A)=0$, it follows that $|\ovb
* \varrho_k|(A) \to |\ovb|(A)$. As $\ovb * \varrho_k \in L^1(A;\Rb^3)$ one can find $\ovb_k \in
\C^\infty_c(A;\Rb^3)$ such that
\begin{equation}\label{1142}\|\ovb_k - (\ovb * \varrho_k)\|_{L^1(A;\Rb^3)} \leq
\frac{1}{k}.\end{equation} Now define
$$v^k_n(x_\a,x_3):= \tilde v_n(x_\a)+ \e_n x_3 \ovb_k(x_\a).$$ The sequence $\{v^k_n\} \subset W^{1,1}(A
\times I;\Rb^3)$, $v^k_n \to u$ in $L^1(A \times I;\Rb^3)$ as $n \to
+\infty$ and $Tv^k_n = Tu$ on $\partial A \times I$. Moreover from
the lower semicontinuity of the total variation, we infer that
$$\lim_{k \to +\infty} \lim_{n \to +\infty} |D_\a v_n^k|(A
\times I) = |D_\a u|(A)$$ and from (\ref{1141}) and (\ref{1142}),
$$\frac{1}{\e_n} \int_I \nabla_3 v_n^k(\cdot,x_3)\, dx_3 =\ovb_k
\xrightharpoonup[k \to +\infty]{*} \ovb \text{ in }\M(A;\Rb^3),$$
uniformly with respect to $n \in \Nb$. Using the separability of
$\C_0(A;\Rb^3)$ and a diagonalization argument (see {\it e.g.}
\cite[Lemma 7.1]{BFF}), one may find a sequence $k(n) \nearrow
+\infty$ such that, setting $v_n:=v_n^{k(n)}$, then $v_n \to u$ in
$L^1(A \times I;\Rb^3)$, $\frac{1}{\e_n} \int_I\nabla_3
v_n(\cdot,x_3)\, dx_3 \xrightharpoonup[]{*} \ovb$ in $\M(A;\Rb^3)$,
$Tv_n=Tu$ on $\partial A \times I$ for all $n \in \Nb$, $|D_\a
v_n|(A \times I) \to |D_\a u|(A)$ and $\frac{1}{\e_n} |D_3 v_n|(A
\times I)=|\ovb_{k(n)}|(A) \to |\ovb|(A)$.
\end{proof}

Using Lemma \ref{1557} and an adaptation of the proof of \cite[Lemma
2.2]{BFM}, we can prove the following result which will be
instrumental in the proof of the lower bound. It states that,
without loss of generality, recovery sequences can be taken in such
a way to match the lateral boundary of their target.

\begin{Lemma}\label{tracegamma}
Let $\o \subset \Rb^2$ be a bounded open set and let $A
\subset\subset \o$ be an open subset with Lipschitz boundary.
Consider  $(u,\ovb) \in BV(\o;\Rb^3) \times \M(\o;\Rb^3)$ such that
$|\ovb|(\partial A)=0$ and assume that $\{u_n\} \subset W^{1,1}(A
\times I;\Rb^3)$ is a sequence satisfying $u_n \to u$ in $L^1(A
\times I;\Rb^3)$, $\frac{1}{\e_n} \int_I \nabla_3 u_n(\cdot,x_3)\,
dx_3 \xrightharpoonup[]{*} \ovb$ in $\M(A;\Rb^3)$ and
$$\lim_{n \to +\infty} \int_{A \times I} W\left(\nabla_\alpha
u_n\Big|\frac{1}{\e_n} \nabla_3 u_n\right) dx =\ell,$$ for some
$\ell >0$. Then there exist a subsequence $\{n_k\} \nearrow +\infty$
and a sequence $\{v_k\} \subset W^{1,1}(A \times I;\Rb^3)$
satisfying $Tv_k=Tu$ on $\partial A \times I$, $v_k \to u$ in $L^1(A
\times I;\Rb^3), \frac{1}{\e_{n_k}} \int_I \nabla_3 v_k(\cdot,x_3)\,
dx_3 \xrightharpoonup[]{*} \ovb$ in $\M(A;\Rb^3)$, and
$$\limsup_{k \to +\infty} \int_{A \times
I}W\left(\nabla_\alpha v_k\Big|\frac{1}{\e_{n_k}} \nabla_3
v_k\right) dx \leq \ell.$$
\end{Lemma}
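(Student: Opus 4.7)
The plan is to run a De Giorgi slicing (cut-off) argument, interpolating the given sequence $\{u_n\}$ with a second sequence produced by Lemma \ref{1557} that already matches the trace.

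\emph{Step 1: construction of an auxiliary sequence.} First I would invoke Lemma \ref{1557} on the Lipschitz subdomain $A$ (using the hypothesis $|\ovb|(\partial A)=0$) to obtain $\{v_n\} \subset W^{1,1}(A \times I;\Rb^3)$ with $Tv_n=Tu$ on $\partial A \times I$, $v_n \to u$ in $L^1(A \times I;\Rb^3)$, $\frac{1}{\e_n}\int_I \nabla_3 v_n(\cdot,x_3)\,dx_3 \xrightharpoonup{*} \ovb$ in $\M(A;\Rb^3)$, together with the tight convergence of total variations $|D_\a v_n|(A \times I) \to |D_\a u|(A)$ and $\frac{1}{\e_n}|D_3 v_n|(A \times I) \to |\ovb|(A)$. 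The $v_n$ have the desired boundary values but need not carry the correct energy, whereas $u_n$ carry the correct energy but the wrong trace; the job is to glue them.

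\emph{Step 2: De Giorgi slicing.} For each $M \in \Nb$, fix a nested family $A \supset\supset B_0 \supset\supset B_1 \supset\supset \cdots \supset\supset B_M$ with $\mathrm{dist}(B_i, \partial B_{i-1}) \geq c/M$, and cut-offs $\varphi_i \in \C^\infty_c(B_{i-1};[0,1])$ with $\varphi_i \equiv 1$ on $B_i$ and $\|\nabla_\a \varphi_i\|_\infty \leq CM$. Set
\[ w_n^{M,i}(x_\a,x_3):=\varphi_i(x_\a)u_n(x_\a,x_3)+\bigl(1-\varphi_i(x_\a)\bigr)v_n(x_\a,x_3). \]
Then $w_n^{M,i}\in W^{1,1}(A\times I;\Rb^3)$, $Tw_n^{M,i}=Tu$ on $\partial A\times I$ (as $\varphi_i$ vanishes near $\partial A$), and $w_n^{M,i}\to u$ in $L^1(A\times I;\Rb^3)$. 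Splitting the energy over $B_i$, $A\setminus B_{i-1}$, and the annulus $B_{i-1}\setminus B_i$, the linear growth $(H_1)$ on the annulus gives
\[ W\!\Bigl(\nabla_\a w_n^{M,i}\Big|\tfrac{1}{\e_n}\nabla_3 w_n^{M,i}\Bigr) \leq \beta\Bigl(1+|\nabla_\a u_n|+|\nabla_\a v_n|+CM|u_n-v_n|+\tfrac{1}{\e_n}|\nabla_3 u_n|+\tfrac{1}{\e_n}|\nabla_3 v_n|\Bigr). \]
Summing the annular integrals in $i=1,\dots,M$, each point of $(B_0\setminus B_M)\times I$ is counted once; the total is uniformly bounded in $n$ (by coercivity of $W$, the energy hypothesis, and Lemma \ref{1557}) plus the term $\beta CM\|u_n-v_n\|_{L^1}$, which vanishes as $n\to\infty$. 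Pigeonhole then yields $i(n,M)\in\{1,\dots,M\}$ for which the annular contribution is at most $1/M$ times the total.

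\emph{Step 3: passage to the limit and diagonalization.} For each $M$ I would then extract a subsequence in $n$ along which $i(n,M)\equiv i^*(M)$ is constant (possible since $i(n,M)$ takes only finitely many values). On this subsequence, the identity
\[ \tfrac{1}{\e_n}\int_I \nabla_3 w_n^{M,i^*(M)}(\cdot,x_3)\,dx_3 = \varphi_{i^*(M)}\tfrac{1}{\e_n}\int_I \nabla_3 u_n\,dx_3 +\bigl(1-\varphi_{i^*(M)}\bigr)\tfrac{1}{\e_n}\int_I \nabla_3 v_n\,dx_3 \]
gives weak* convergence to $\varphi_{i^*(M)}\ovb+(1-\varphi_{i^*(M)})\ovb=\ovb$ in $\M(A;\Rb^3)$ as $n\to\infty$, and the energy estimate of Step 2 reads $\limsup_n \int_{A\times I}W(\nabla_\a w_n^{M,i^*(M)}|\tfrac{1}{\e_n}\nabla_3 w_n^{M,i^*(M)})\,dx \leq \ell + C/M$, after observing that the $\int_{(A\setminus B_{i^*(M)-1})\times I}W(\nabla v_n)\,dx$ piece is also controlled via $(H_1)$ and the tight convergence from Lemma \ref{1557}. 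A standard diagonal extraction (as in \cite[Lemma 7.1]{BFF}, exploiting the separability of $\C_0(A;\Rb^3)$ for the measure convergence) finally produces $v_k:=w_{n_k}^{M_k,i^*(M_k)}$ with all the properties claimed.

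\emph{Main obstacle.} The delicate point is the transition annulus: because $|\nabla\varphi_i|\sim M$, a naive bound would generate an uncontrollable $M\|u_n-v_n\|_{L^1}$ error. This is precisely why one averages over the $M$ annuli (pigeonhole) and sends $n\to\infty$ \emph{before} $M\to\infty$, the linear growth $(H_1)$ being essential so that the only quantities to control are $L^1$-norms of gradients and of $u_n-v_n$.
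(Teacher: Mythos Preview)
Your approach is correct and coincides with the paper's: the authors do not write out the proof but refer to Lemma \ref{1557} combined with an adaptation of \cite[Lemma 2.2]{BFM}, which is exactly the De Giorgi cut-off/averaging argument you outline. The only point to make more explicit is that the nested sets $B_0,\dots,B_M$ should be chosen in an $M$-dependent thin collar of $\partial A$ (so that the outer contribution $\int_{(A\setminus B_{i^*(M)-1})\times I}W(\nabla_\a v_n\,|\,\tfrac{1}{\e_n}\nabla_3 v_n)\,dx$ is bounded by $\beta\big(\LL^2+|D_\a u|+|\ovb|\big)(A\setminus B_{M-1})\to 0$ via the tight convergence of Lemma \ref{1557}); this is routine and implicit in your phrase ``controlled via $(H_1)$ and the tight convergence''.
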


\begin{Remark}\label{bdry}
{\rm If $u \in W^{1,1}(\o;\Rb^3)$ then by \cite[Lemma 2.2]{BFM} it
is not necessary to assume neither $\partial A$ to be Lipschitz nor
that $|\ovb|(\partial A)=0$. In that case the conclusion is that
$v_k=u$ on a neighborhood of $\partial A \times I$. }\end{Remark}

To prove the upper bound, we will also need the following locality
result.

\begin{Lemma}\label{lemma2.1bfmbend}
Let $\o \subset \Rb^2$ be a bounded open set with Lipschitz boundary
and let $W:\Rb^{3 \times 3} \to [0,+\infty)$ be a Borel function
satisfying $(H_1)$. For every $(u,\ovb) \in BV(\o;\Rb^3)\times
\M(\o;\Rb^3)$, the set function $J_{\{\e_n\}}(u,\ovb,\cdot)$ is the
trace on $\A(\o)$ of a Radon measure absolutely continuous with
respect to $\LL^2 + |D_\a u| + |\ovb|$.
\end{Lemma}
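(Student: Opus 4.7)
My plan is to verify the hypotheses of the De Giorgi--Letta criterion for $J_{\{\e_n\}}(u,\ovb,\cdot)$ as a set function on $\A(\o)$, namely: monotonicity, superadditivity on disjoint open sets, subadditivity, and inner regularity. Once this is established, $J_{\{\e_n\}}(u,\ovb,\cdot)$ extends to a Borel measure; the absolute continuity with respect to $\LL^2+|D_\a u|+|\ovb|$ will follow from a global upper bound of the correct form. Monotonicity is immediate from the definition since any admissible sequence on a larger set restricts to one on a smaller set, and superadditivity on disjoint open sets follows because competitors defined on disjoint pieces $A_1$ and $A_2$ can be concatenated in the infimum defining $J_{\{\e_n\}}(u,\ovb,A_1\cup A_2)$ thanks to additivity of the integral.

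For the upper bound, given $A\in\A(\o)$ with Lipschitz boundary and $|\ovb|(\partial A)=0$, apply Lemma \ref{1557} to obtain $v_n\in W^{1,1}(A\times I;\Rb^3)$ with the prescribed convergences and with $|D_\a v_n|(A\times I)\to|D_\a u|(A)$ and $\e_n^{-1}|D_3 v_n|(A\times I)\to|\ovb|(A)$. By $(H_1)$,
$$J_{\e_n}(v_n,\e_n^{-1}\textstyle\int_I\nabla_3 v_n\,dx_3,A)\leq \b\bigl(\LL^2(A)+|D_\a v_n|(A\times I)+\e_n^{-1}|D_3 v_n|(A\times I)\bigr),$$
so passing to the liminf gives $J_{\{\e_n\}}(u,\ovb,A)\leq \b(\LL^2(A)+|D_\a u|(A)+|\ovb|(A))$. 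Since sets of this form are dense in $\A(\o)$ (by approximating $A\in\A(\o)$ from inside by smooth subdomains whose boundaries avoid the at-most-countable set of $r$'s where $|\ovb|(\partial A_r)>0$), monotonicity yields the bound on every open set, which forces the measure, once produced, to be absolutely continuous with respect to $\LL^2+|D_\a u|+|\ovb|$.

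The main obstacle is subadditivity, which I would attack by the standard De Giorgi slicing/gluing argument, adapted to the pair $(u,\ovb)$. Fix $U$, $V$, $T\in\A(\o)$ with $U\subset\subset T\subset\subset V$. Let $\{u_n\}\subset W^{1,1}(T\times I;\Rb^3)$ be a sequence nearly attaining $J_{\{\e_n\}}(u,\ovb,T)$, and let $\{v_n\}\subset W^{1,1}((V\setminus\overline U)\times I;\Rb^3)$ nearly attain $J_{\{\e_n\}}(u,\ovb,V\setminus\overline U)$. Choose finitely many concentric Lipschitz annular layers $T\setminus\overline U\supset S_1\supset\dots\supset S_M$ all satisfying $|\ovb|(\partial S_i)=0$ (achievable on all but countably many radii); apply Lemma \ref{tracegamma} (and Remark \ref{bdry} where needed) to modify $u_n$ and $v_n$ on each layer so that their traces agree with $Tu$, and glue them via cut-off functions in the Cosserat direction that preserve the constraint $\ovb_n=\e_n^{-1}\int_I\nabla_3(\cdot)\,dx_3$ on the gluing strip (this is the delicate point: because the growth is linear one must avoid producing $\nabla\zeta\cdot u$ terms of wrong scale, which is precisely why the trace-matching lemma was established). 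Averaging over the $M$ layers and choosing the best one produces, for $\eta>0$ arbitrary, a sequence on $V$ whose energy is controlled by $J_{\{\e_n\}}(u,\ovb,T)+J_{\{\e_n\}}(u,\ovb,V\setminus\overline U)+\eta+C\bigl(\LL^2((T\setminus\overline U))+|D_\a u|(T\setminus\overline U)+|\ovb|(T\setminus\overline U)\bigr)/M$, where the last error comes from $(H_1)$ applied to the ``bad'' layer. Sending $M\to\infty$, then $T\nearrow V$ and $U\nearrow V\cap W$ for a second set $W$, yields the desired subadditivity.

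Finally, inner regularity is a routine consequence of the upper bound just proved together with monotonicity: given $A\in\A(\o)$ and $\eta>0$, take $U\subset\subset A$ with $\LL^2(A\setminus U)+|D_\a u|(A\setminus U)+|\ovb|(A\setminus U)<\eta$; combining with the subadditivity on $A=U\cup(A\setminus\overline{U'})$ for $U\subset\subset U'\subset\subset A$ shows $J_{\{\e_n\}}(u,\ovb,A)\leq J_{\{\e_n\}}(u,\ovb,U')+C\eta$. De Giorgi--Letta then provides the Borel measure extension, and the uniform upper bound gives its absolute continuity with respect to $\LL^2+|D_\a u|+|\ovb|$.
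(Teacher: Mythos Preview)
Your strategy is correct and is essentially the content of the reference the paper invokes: after establishing the upper bound on ``nice'' sets via Lemma~\ref{1557}, the paper writes ``Repeating word for word the proof of \cite[Lemma 2.1]{BFM}'' for the measure property, and that reference is precisely a De Giorgi--Letta argument of the type you sketch. Your handling of the $\ovb$-constraint under gluing is the right point (since the cut-off depends only on $x_\a$, the averaged $\e_n^{-1}\int_I\nabla_3$ is a convex combination of the two competitors' averages and still converges weakly* to $\ovb$). Two small inaccuracies worth flagging: the sentence ``monotonicity yields the bound on every open set'' via \emph{inner} approximation is not literally true (monotonicity goes the wrong way), though your clause ``once the measure is produced'' fixes the logic; and the worry about ``$\nabla\zeta\cdot u$ terms of wrong scale'' is misplaced, since the cut-off produces $\nabla_\a\zeta\otimes(u_n-v_n)$, which vanishes in $L^1$ regardless of linear growth --- trace-matching is convenient but not essential for the slicing step.

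Where the paper's own argument genuinely differs is in how it obtains the upper bound $J_{\{\e_n\}}(u,\ovb,A)\le\b\bigl(\LL^2(A)+|D_\a u|(A)+|\ovb|(A)\bigr)$ on an \emph{arbitrary} $A\in\A(\o)$. Rather than deferring this to inner regularity of the eventual measure, the paper proves it directly by \emph{outer} approximation: it extends $u$ and $\ovb$ by zero outside $\o$ (this is where the Lipschitz hypothesis on $\partial\o$ is used), chooses a smooth $C\supset\supset A$ with small excess in $\LL^2+|D_\a u|+|\ovb|$, and then selects a level set $C_{t_0}$ of $\dist(\cdot,\partial C)$ with $|\ovb|(\partial C_{t_0})=0$ and $A\subset\subset C_{t_0}\subset C$, so that monotonicity and Lemma~\ref{1557} apply to $C_{t_0}$. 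This buys a clean decoupling: the upper bound is available on all of $\A(\o)$ before any De Giorgi--Letta machinery is invoked, so absolute continuity follows immediately once the measure property is cited, with no risk of circularity between the bound and inner regularity.
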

\begin{proof}
Fix $(u,\ovb) \in BV(\o;\Rb^3)\times \M(\o;\Rb^3)$. Since $\o$ has a
Lipschitz boundary, the extension of $u$ by zero outside $\o$ is a
$BV(\Rb^2;\Rb^3)$. Hence upon extending $u$ and $\ovb$ by zero
outside $\o$, we may assume without loss of generality that $\ovb
\in \M(\Rb^2;\Rb^3)$ and $u \in BV(\Rb^2;\Rb^3)$.

Assume first that $A \in \A_0$, that $\partial A$ is Lipschitz and
satisfies $|\ovb|(\partial A)=0$. By Lemma \ref{1557}, taking
$\{v_n\}$ as test function for $J_{\{\e_n\}}(u,\ovb,A)$ and using
the growth condition $(H_1)$, we get that
$$0 \leq J_{\{\e_n\}}(u,\ovb,A) \leq \b \big(\LL^2(A)
+ |D_\a u|(A) + |\ovb|(A)\big).$$

Consider now an arbitrary open set $A \in \A(\o)$.  By \cite[Example
14.8]{DM}, for any $\eta
>0$, there exists an open set $C$ with smooth boundary such that $A
\subset\subset C$ and
\begin{equation}\label{eta}\LL^2(C \setminus A) + |D_\a u|(C\setminus A)
+ |\ovb|(C\setminus A)<\eta/\b.\end{equation} Note that $C$ may not
be contained in $\o$ and this is the reason why we need to extend
$u$ and $\ovb$ outside $\o$. By \cite[Lemma 14.16]{GT}, the function
$x \mapsto \dist(x,\partial C)$ is smooth on a suitable
$\d$-neighborhood of $\partial C$ for some $\d < \dist(A,\partial
C)$. For every $t \in [0,\d]$, define
$$C_t:=\{x \in C: \dist(x,\partial C)>t\} \quad \text{and}\quad S_t:=\{x \in C: \dist(x,\partial C)=t\}.$$
As the family $\{S_t\}_{t}$ is made of pairwise disjoint sets, it is
possible to find $t_0 \in [0,\d]$ such that $|\ovb|(S_{t_0})=0$.
Since $S_{t_0}=\partial C_{t_0}$, it follows that $C_{t_0}$ is a
smooth open set satisfying $A \subset\subset C_{t_0} \subset C$.
Since $J_{\{\e_n\}}(u,\ovb,\cdot)$ is an increasing set function, we
obtain from the first case together with (\ref{eta}) that
\begin{eqnarray*}J_{\{\e_n\}}(u,\ovb,A) & \leq & J_{\{\e_n\}}(u,\ovb,C_{t_0}) \leq \b
\big(\LL^2(C_{t_0}) + |D_\a
u|(C_{t_0}) + |\ovb|(C_{t_0})\big)\\
& \leq & \b \big(\LL^2(A) + |D_\a u|(A) + |\ovb|(A)\big) +
\eta\end{eqnarray*} and the thesis comes from the arbitrariness of
$\eta$. Repeating word for word the proof of \cite[Lemma 2.1]{BFM},
we get that $J_{\{\e_n\}}(u,\ovb,\cdot)$ is the restriction to
$\A(\o)$ of a Radon measure absolutely continuous with respect to
$\LL^2+|D_\a u| + |\ovb|$. Note that there is no need to extract a
further subsequence as stated in \cite{BFM} since we already did it
passing from $\{\e_j\}$ to $\{\e_n\}\equiv \{\e_{j_n}\}$.
\end{proof}

\section{Proof of the lower bound}

\begin{Lemma}\label{lowerbound}
For every $(u,\ovb) \in BV(\o;\Rb^3) \times \M(\o;\Rb^3)$, then
$J_{\{\e_n\}}(u,\ovb,\o) \geq E(u,\ovb)$.
\end{Lemma}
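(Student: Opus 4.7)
The strategy is the Fonseca--M\"uller blow-up method, adapted here to pairs (function, measure). By Lemma \ref{lemma2.1bfmbend}, the set function $J_{\{\e_n\}}(u,\ovb,\cdot)$ is the trace on $\A(\o)$ of a Radon measure $\mu$ absolutely continuous with respect to $\LL^2+|D_\a u|+|\ovb|$. Using the decomposition $D_\a u = \nabla_\a u \,\LL^2 + (u^+-u^-)\otimes \nu_u\,\HH^1\res J_u + D_\a^c u$ together with the Besicovitch splitting $\ovb = \frac{d\ovb}{d|D_\a u|}|D_\a u| + \ovb^\sigma$, the dominating measure breaks into four mutually singular pieces $\LL^2$, $\HH^1\res J_u$, $|D_\a^c u|$, $|\ovb^\sigma|$. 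Since $E(u,\ovb)$ is the sum of four integrals against precisely these measures, it suffices to bound from below the Radon--Nikod\'ym derivative of $\mu$ with respect to each of them by the corresponding integrand in \eqref{gammarep1}. I would carry out the four blow-up arguments in turn, each localized at a Lebesgue point (with respect to the relevant derivative measure) at which all Radon--Nikod\'ym derivatives under consideration exist.

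For the bulk step, at an $\LL^2$-Lebesgue point $x_0$ of $\nabla_\a u$ and $\frac{d\ovb}{d\LL^2}$, I extract a recovery sequence $\{u_n\}$ for $J_{\{\e_n\}}(u,\ovb,Q'(x_0,\rho))$, match its lateral boundary to the affine target $u(x_0)+\nabla_\a u(x_0)(x_\a-x_0)$ via Lemma \ref{tracegamma}, and pass to a joint diagonal limit in $\rho$ and $n$. The defining formula \eqref{QastW} for $\mathcal Q^*W$, together with Remark \ref{sobolevcase}, identify the limit with $\mathcal Q^*W\bigl(\nabla_\a u(x_0)\big|\frac{d\ovb}{d\LL^2}(x_0)\bigr)$. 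For the jump step, at $\HH^1$-a.e.\ $x_0 \in J_u$, Federer--Vol'pert yields that $\nu_u(x_0)^\perp$ is the approximate tangent to $J_u$; blowing up on a rotated cube $Q'_{\nu_u(x_0)}(x_0,\rho)\x I$ with the jump rescaling, the lower bound gets recast in terms of the auxiliary surface density $\g$ of \eqref{surfnrj}, which coincides with $(\mathcal Q^*W)^\infty\bigl((u^+-u^-)(x_0)\otimes \nu_u(x_0),\frac{d\ovb}{d\HH^1\res J_u}(x_0)\bigr)$ by Proposition \ref{denssurf}.

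For the Cantor step, at $|D_\a^c u|$-a.e.\ $x_0$ Alberti's rank-one theorem gives $\frac{dD_\a^c u}{d|D_\a^c u|}(x_0)=z\otimes\nu$ for some $z\in\Rb^3$, $\nu\in\mathbb S^1$. Selecting a good tangent measure to $|D_\a^c u|$ at $x_0$ (cf.~\cite[Corollary 2.43]{AFP}) and simultaneously blowing up $u$ and $\ovb$ on cubes $Q'_\nu(x_0,\rho)$ with the recession-type rescaling, the rank-one convexity of $\mathcal Q^*W$ in the direction $(z\otimes\nu,b)$ provided by Proposition \ref{convex}, together with the positive $1$-homogeneity of $(\mathcal Q^*W)^\infty$ and Jensen's inequality, yield $\frac{d\mu}{d|D_\a^c u|}(x_0)\geq (\mathcal Q^*W)^\infty\bigl(\frac{dD_\a u}{d|D_\a^c u|}(x_0)\big|\frac{d\ovb}{d|D_\a^c u|}(x_0)\bigr)$. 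For the singular part, at $|\ovb^\sigma|$-a.e.\ $x_0$ one has $\frac{d|D_\a u|}{d|\ovb^\sigma|}(x_0)=0$, so after blow-up the in-plane gradient is asymptotically suppressed while $\frac{1}{\e_n}\nabla_3 u_n$ concentrates on a tangent of $\ovb$; this produces the $0$ in the first slot of $(\mathcal Q^*W)^\infty$ and $\frac{d\ovb}{d|\ovb^\sigma|}$ in the second.

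The main obstacle is the $|\ovb^\sigma|$-part, which is the genuinely new feature of the present framework and has no counterpart in classical $BV$-relaxation theory. One must show that a recovery sequence can be selected so that the rescaled in-plane gradient becomes negligible at the scale of the blow-up, while the averaged transverse derivative concentrates exactly on $\ovb^\sigma$; this is a delicate tangent-measure selection that exploits the mutual singularity of $\ovb^\sigma$ and $|D_\a u|$. The Cantor step involves analogous difficulties but is already guided by Alberti's theorem and the convexity property Proposition \ref{convex}, which provides exactly the separate rank-one/linear behavior of $\mathcal Q^*W$ in the two slots that blow-up arguments require.
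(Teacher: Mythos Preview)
Your overall four-part blow-up plan matches the paper's, but two points deserve comment.

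First, a structural difference: the paper does \emph{not} invoke Lemma~\ref{lemma2.1bfmbend} for the lower bound. Instead it fixes \emph{one} recovery sequence $\{u_n\}$ for $J_{\{\e_n\}}(u,\ovb,\o)$, passes the associated energy measures $\mu_n(B)=\int_{B\times I}W(\nabla_\a u_n|\tfrac{1}{\e_n}\nabla_3 u_n)\,dx$ to a weak* limit $\mu$, and then shows the four Radon--Nikod\'ym lower bounds for this $\mu$. Your setup via Lemma~\ref{lemma2.1bfmbend} would force the recovery sequence to depend on the cube $Q'(x_0,\rho)$ and, more importantly, that lemma assumes $\partial\o$ Lipschitz, whereas the present lemma does not. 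The single-sequence route avoids both issues.

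Second, and more substantively, your Cantor step is not right as written. Proposition~\ref{convex} gives convexity of $\mathcal Q^*W$ only along \emph{segments} of the form $t\mapsto (\overline\xi+t\, z\otimes\nu\,|\,b_1+t(b_2-b_1))$; this is far from a Jensen inequality for the blow-up sequence, whose in-plane gradients $\nabla_\a \bar w_k$ are not rank-one matrices. The paper never uses Proposition~\ref{convex} in the lower bound. Its Cantor argument is constructive: after the Alberti/AFP blow-up produces a limit $w(y_\a)=a\,\theta(y_2)$ and a tangent measure $\nu^c$ for $|D_\a^c u|$, one passes from $W$ to $W^\infty$ via $(H_2)$ (using that the rescaling factor $t_k=|D_\a u|(Q'(x_0,\rho_k))/\rho_k^2\to+\infty$), then modifies $\bar w_k$ into a function $\varphi_k$ that is $Q'$-periodic in $x_\a$ with prescribed transverse average $\tfrac{1}{\d_k}\int\nabla_3\varphi_k=\frac{d\ovb}{d|D_\a^c u|}(x_0)$. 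This makes $\varphi_k$ admissible in the very definition of $\mathcal Q^*(W^\infty)$, whence the lower bound follows directly from that definition together with Proposition~\ref{denssurf}. The same ``make the blow-up admissible for $\mathcal Q^*W$'' mechanism (not Jensen) is what drives the bulk, jump and $|\ovb^\sigma|$ steps as well.
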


\begin{proof}
Fix $(u,\ovb, A) \in BV(\omega;\mathbb R^3) \times{\cal
M}(\omega;\mathbb R^3)\times {\cal A}(\omega)$. Thanks to the
Besicovitch Decomposition Theorem, one may find four mutually
singular measures $\ovb^a$, $\ovb^j$, $\ovb^c$ and $\ovb^\sigma$
such that $\ovb=\ovb^a+\ovb^j+\ovb^c+\ovb^\sigma$ and $\ovb^a \ll
\LL^2$, $\ovb^j \ll \HH^1 \res\, J_u$ and $\ovb^c \ll |D^c_\a u|$.

Consider a sequence $\{u_n\} \subset W^{1,1}(\O;\Rb^3)$ such that
$u_n \xrightharpoonup[]{*} u$ in $BV(\O;\mathbb R^3)$,
$\frac{1}{\e_n} \int_I \nabla_3 u_n(\cdot,x_3)\, dx_3
\xrightharpoonup[]{*} \ovb$ in ${\cal M}(\omega;\mathbb R^3)$, and
$$J_{\{\e_n\}}(u,\ovb,\o)=\lim_{n \to +\infty} \int_{\O}W\left(\nabla_\a u_n\Big|\frac{1}{\e_n} \nabla_3 u_n\right) dx.$$
For every Borel set $B \subset \o$, define
$$\mu_n(B):= \int_{B \times I} W\left(\nabla_\a u_n\Big|\frac{1}{\e_n} \nabla_3 u_n\right) dx \quad
\text{ and }\quad \overline b_n(B):=\frac{1}{\e_n}\int_{B \times
I}\nabla_3 u_n\ dx.$$ It turns out that $\{\mu_n\}$ and
$\{|\overline b_n|\}$ are sequences of nonnegative Radon measures
uniformly bounded in ${\cal M}(\o)$. Hence we can extract
subsequences, still denoted $\{\mu_n\}$ and $\{|\overline b_n|\}$,
and find $\mu$ and $\lambda \in {\cal M}(\o)$ so that $\mu_n
\xrightharpoonup[]{*} \mu$ and $|\overline b_n|\xrightharpoonup[]{*}
\lambda$ in $\M(\o)$. Similarly we can decompose the measure $\mu$
as the sum of five mutually singular measures $\mu^a$, $\mu^j$,
$\mu^c$, $\mu^\sigma$ and $\mu^s$ such that
$\mu=\mu^a+\mu^j+\mu^c+\mu^\sigma+\mu^s$ and $\mu^a \ll \LL^2$,
$\mu^j \ll \HH^1 \res\, J_u$, $\mu^c \ll |D^c u|$ and $\mu^\sigma
\ll |\ovb^\sigma|$.

Since $\mu(\o) \leq J_{\{\e_n\}}(u,\ovb,\o)$, in order to show the
lower bound, it is enough to check that $\mu(\o)\geq E(u,\ovb)$ or
that
\begin{eqnarray}
\frac{d\mu^a}{d\LL^2}(x_0) \geq \mathcal Q^*W\left(\nabla_\a
u(x_0)\Big|\frac{d\ovb}{d\LL^2}(x_0)\right) \quad \text{ for
$\LL^2$-a.e. }x_0 \in \o,\label{mua}\\
\frac{d\mu^j}{d\HH^1\res\, J_u}(x_0) \geq (\mathcal Q^\ast
W)^\infty\left((u^+(x_0)-u^-(x_0))\otimes
\nu_u(x_0),\frac{d\ovb}{d\HH^1\res\,
J_u}(x_0)\right) \quad \text{ for $\HH^1$-a.e. }x_0 \in J_u,\label{muj}\\
\frac{d\mu^c}{d|D_\a^c u|}(x_0) \geq (\mathcal
Q^*W)^\infty\left(\frac{dD_\a u}{d|D_\a^c
u|}(x_0)\Big|\frac{d\ovb}{d|D_\a^c u|}(x_0)\right) \quad \text{ for
$|D_\a^c u|$-a.e. }x_0 \in \o,\label{muc}\\
\frac{d\mu^\sigma}{d|\ovb^\sigma|}(x_0) \geq (\mathcal
Q^*W)^\infty\left(0\Big|\frac{d\ovb}{d|\ovb^\sigma|}(x_0)\right)
\quad \text{ for $|\ovb^\sigma|$-a.e. }x_0 \in \o.\label{mus}
\end{eqnarray}
Indeed, if the four previous properties hold, we obtain that
\begin{eqnarray*}
&&\int_\o \mathcal Q^*W\left(\nabla_\a
u\Big|\frac{d\ovb}{d\LL^2}\right) dx + \int_{J_u}(\mathcal Q^\ast
W)^\infty\left((u^+-u^-)\otimes \nu_u,\frac{d\ovb}{d\HH^1\res\,
J_u}\right)d\HH^1\\
&&+\int_\o (\mathcal Q^*W)^\infty\left(\frac{dD_\a u}{d|D_\a^c
u|}\Big|\frac{d\ovb}{d|D_\a^c u|}\right)d|D_\a^c u|+
\int_\o(\mathcal
Q^*W)^\infty\left(0\Big|\frac{d\ovb}{d|\ovb^\sigma|}\right)d|\ovb^\sigma|\\
&& \hspace{1.5cm}=\mu^a(\o)+\mu^j(\o)+\mu^c(\o)+\mu^\sigma(\o) \leq
\mu(\o)\leq J_{\{\e_n\}}(u,\ovb,\o),
\end{eqnarray*}
which is the announced claim.
\end{proof}

The remaining of the section is devoted to prove the inequalities
(\ref{mua})-(\ref{mus})

\vskip5pt

\noindent {\bf Proof of (\ref{mua}).} Let $x_0 \in \o$ be such that
the Radon-Nikod\'ym derivative of $\mu$ and $\ovb$ at $x_0$ with
respect to $\LL^2$ exist and are finite, which is also a Lebesgue
point for $u$, $\nabla_\a u$ and $\frac{d\ovb}{d\LL^2}$, a point of
approximate differentiability of $u$, and
\begin{equation}\label{ba}
\frac{d|\mu-\mu^a|}{d\LL^2}(x_0)=\frac{d|\ovb-\ovb^a|}{d\LL^2}(x_0)=0.
\end{equation}
Note that since $|\ovb-\ovb^a|$ and $|\mu-\mu^a|$ are singular with
respect to the Lebesgue measure, then $\LL^2$ almost every points
$x_0 \in \o$ satisfy these properties.  Let $\{\rho_k\}$ be a
sequence converging to zero and such that $\lambda(\partial
Q'(x_0,\rho_k))=\mu(\partial Q'(x_0,\rho_k))=0$ for every $k \in
\Nb$. Hence it follows from (\ref{ba}) that
\begin{eqnarray}
\frac{d\mu^a}{d\LL^2}(x_0) &=& \frac{d\mu}{d\LL^2}(x_0)  = \lim_{k \to +\infty}\frac{\mu(Q'(x_0,\rho_k))}{\rho_k^2}\nonumber\\
& = &\lim_{k \to +\infty} \lim_{n \to +\infty}\frac{1}{\rho_k^2}
\int_{Q'(x_0,\rho_k) \times I} W\left(\nabla_\a u_n \Big|\frac{1}{\e_n} \nabla_3 u_n\right) dx\nonumber\\
& = & \lim_{k \to +\infty} \lim_{n \to +\infty} \int_{Q' \times I}
W\left(\nabla_\a u_n(x_0 +\rho_k y_\alpha, y_3)\Big|
\frac{1}{\e_n} \nabla_3 u_n(x_0+\rho_k y_\alpha, y_3)\right) dy\nonumber\\
& = & \lim_{k \to +\infty} \lim_{n \to +\infty} \int_{Q ' \times I}
W\left(\nabla_\a u_{n,k}\Big|\frac{\rho_k}{\e_n} \nabla_3
u_{n,k}\right)dy,\label{diag1}
\end{eqnarray}
where we set $u_{n,k}(y_\a,y_3):= [u_n(x_0+\rho_k y_\alpha,
y_3)-u(x_0)]/\rho_k$.

Since $x_0$ is a point of approximate differentiability of $u$ and
$u_n \to u$ in $L^1(\O;\Rb^3)$, defining $u_0(y_\a,y_3):=\nabla_\a
u(x_0) y_\a$, it results that
\begin{equation}\label{approxuen}
\lim_{k \to +\infty} \lim_{n \to +\infty} \|u_{n,k}-u_0 \|_{L^1(Q'
\times I;\mathbb R^3)}=0.
\end{equation}
On the other hand, using (\ref{ba}), the fact that $(1/\e_n) \int_I
\nabla_3 u_n(\cdot,x_3)\, dx_3 \xrightharpoonup[]{*} \ovb$ in
$\M(\o;\Rb^3)$ and that $x_0$ is a Lebesgue point of
$\frac{d\ovb}{d\LL^2}$, for every $\varphi \in \C_0(Q';\Rb^3)$ we
get that
\begin{equation}\label{diag3}
\lim_{k \to +\infty} \lim_{n \to +\infty} \int_{Q'} \left(
\frac{\rho_k}{\e_n} \int_I \nabla_3 u_{n,k}(y_\a,y_3)\, dy_3 \right)
\cdot \varphi(y_\a)\, dy_\a= \frac{d\ovb}{d\LL^2}(x_0)\cdot
\int_{Q'}\varphi(y_\a)\, dy_\a.
\end{equation}
Moreover, since $\lambda(\partial Q'(x_0,\rho_k))=0$ for each $k \in
\Nb$, \cite[Proposition 1.62]{AFP} ensures that $\overline
b_n(Q'(x_0,\rho_k)) \to \overline b(Q'(x_0,\rho_k))$ as $n \to
+\infty$ and thus
\begin{equation}\label{diag4bbis}
\lim_{k \to +\infty} \lim_{n \to +\infty} \frac{\rho_k}{\e_n}
\int_{Q'\times I} \nabla_3 u_{n,k}\, dy= \frac{d\ovb}{d\LL^2}(x_0).
\end{equation}
Gathering (\ref{diag1}), (\ref{approxuen}), (\ref{diag3}) and
(\ref{diag4bbis}) and using the fact that $\M(Q';\Rb^3)$ is the dual
of the separable space $\C_0(Q';\Rb^3)$, by means of a standard
diagonalization process, one may construct a sequence $\bar
u_k:=u_{n_k,k}-u_0$ and $\d_k:=\e_{n_k}/\rho_k$ such that $\bar u_k
\to 0$ in $L^1(Q' \times I;\Rb^3)$, $\d_k \to 0$, $(1/\d_k) \int_I
\nabla_3 \bar u_k(\cdot,y_3)\, dy_3 \xrightharpoonup[]{*}
\frac{d\ovb}{d\LL^2}(x_0)\LL^2$ in $\M(Q';\Rb^3)$,
\begin{equation}\label{diag4bis}
\frac{1}{\d_k} \int_{Q'\times I} \nabla_3 \bar u_k\, dy \to
\frac{d\ovb}{d\LL^2}(x_0).
\end{equation} and
\begin{equation}\label{3.13bfm}
\frac{d\mu^a}{d\LL^2}(x_0)=\lim_{k \to +\infty} \int_{Q' \times I}
W\left(\nabla _\a u(x_0)+\nabla_\a \bar u_k\Big|
\frac{1}{\d_k}\nabla_3 \bar u_k\right) dy.
\end{equation}
Using Remark \ref{bdry}, one may assume without loss of generality
that $\bar u_k=0$ on a neighborhood of $\partial Q' \times I$. We
now define
$$\varphi_k(x_\a,x_3):=\bar u_k(x_\a,x_3) +\d_k x_3
\left(\frac{d\ovb}{d\LL^2}(x_0) - \frac{1}{\d_k} \int_{Q' \times
I}\nabla_3 \bar u_k(y)\, dy\right).$$ Then, $\varphi_k \in
W^{1,1}(Q' \times I;\Rb^3)$, $\varphi_k(\cdot,x_3)$ is $Q'$-periodic
for $\LL^1$-a.e. $x_3 \in I$ and
$$\frac{1}{\d_k} \int_{Q' \times I} \nabla_3\varphi_k\, dy =
\frac{d\ovb}{d\LL^2}(x_0).$$ Hence $\varphi_k$ is an admissible test
function for $\mathcal Q^*W\Big(\nabla_\a
u(x_0)\big|\frac{d\ovb}{d\LL^2}(x_0)\Big)$, and using
(\ref{3.13bfm}) together with the Lipschitz property (\ref{Wlip}),
we get that
\begin{eqnarray*}
\frac{d\mu^a}{d\LL^2}(x_0) & \geq & \limsup_{k \to +\infty} \int_{Q'
\times I} W\left(\nabla _\a u(x_0)+\nabla_\a \varphi_k\Big|
\frac{1}{\d_k} \nabla_3 \varphi_k\right) dy\\
&& - L \limsup_{k \to +\infty} \left|\frac{d\ovb}{d\LL^2}(x_0) -
\frac{1}{\d_k} \int_{Q' \times I}\nabla_3 \bar u_k\,
dy\right|.\end{eqnarray*}  Relation (\ref{diag4bis}) enables us
to conclude that the last term in the previous inequality is
actually zero and thus
$$\frac{d\mu^a}{d\LL^2}(x_0) \geq \mathcal Q^*W\left(\nabla_\a
u(x_0)\Big|\frac{d\ovb}{d\LL^2}(x_0)\right).$$

\vskip5pt

\noindent {\bf Proof of (\ref{muj}).} Let $x_0 \in J_u$, then there
exists $u^-(x_0)$, $u^+(x_0) \in \Rb^3$ (with $u^-(x_0) \neq
u^+(x_0)$) and $\nu=\nu_u(x_0) \in \mathbb S^1$ such that
$$\lim_{\rho \to 0^+}\frac{1}{\rho^2}\int_{\{y_\a \in Q'_\nu(x_0,\rho): \, \pm (y_\a-x_0)\cdot
\nu>0\}}|u(y_\a)-u^\pm(x_0)|\, dy_\a =0.$$ Assume that the
Radon-Nikod\'ym derivative of $\mu$ and $\ovb$ at $x_0$ with respect
to $\HH^1\res \, J_u$ exist and are finite, that $x_0$ is Lebesgue
point for $\frac{d\ovb}{d\HH^1\res\, J_u}$ with respect to $\HH^1
\res \, J_u$, that
\begin{equation}\label{bj}\frac{d|\mu-\mu^j|}{d\HH^1 \res\,
J_u}(x_0)=\frac{d|\ovb-\ovb^j|}{d\HH^1 \res\,
J_u}(x_0)=0,\end{equation} and
\begin{equation}\label{rect}\lim_{\rho \to 0^+} \frac{\HH^1(J_u \cap
Q_\nu'(x_0,\rho))}{\rho}=1.\end{equation} Assume further that
$\pi_\nu:=\nu^\perp$ is the approximate tangent space of $J_u$ at
$x_0$, {\it i.e.}, \begin{equation}\label{approx}\lim_{\rho \to
0}\frac{1}{\rho}\int_{J_u}\phi\left(\frac{x_\a-x_0}{\rho}\right)\,
d\HH^1(x_\a)=\int_{\pi_\nu}\phi(x_\a)\, d\HH^1(x_\a)\quad \text{ for
all }\phi \in \C_c(\Rb^2).\end{equation} Note that $\HH^1$ almost
every points $x_0$ in $J_u$ satisfy the preceding requirements.
Indeed (\ref{rect}) is a consequence of the countably
$\HH^1$-rectifiability of $J_u$ (see {\it e.g.} \cite[Theorem
2.63]{AFP}), property (\ref{bj}) is due to the fact that the
measures $|\mu-\mu^j|$ and $|\ovb-\ovb^j|$ are singular with respect
to $\HH^1\res\, J_u$ while (\ref{approx}) is a consequence of the
Federer-Vol'pert Theorem (see \cite[Theorem 3.78]{AFP}).

Let $\{\rho_k\} \searrow 0^+$ be such that $\lambda(\partial
Q'_\nu(x_0, \rho_k))=\mu(\partial Q'_\nu(x_0,\rho_k))=0$ for each $k
\in \Nb$. Then by virtue of (\ref{bj}) and (\ref{rect}), we infer
that
\begin{eqnarray}
\frac{d\mu^j}{d\HH^1 \res \, J_u}(x_0) & = & \frac{d\mu}{d\HH^1 \res
\, J_u}(x_0)=\lim_{k \to +\infty}\frac{\mu(Q'_\nu(x_0,\rho_k))}
{\HH^1(Q'_\nu(x_0,\rho_k) \cap J_u)}=\lim_{k \to
+\infty}\frac{\mu(Q'_\nu(x_0,\rho_k))}
{\rho_k}\nonumber\\
& = &\lim_{k \to +\infty} \lim_{n \to +\infty}\frac{1}{\rho_k}
\int_{Q'_\nu(x_0,\rho_k) \times I} W\left(\nabla_\a u_n \Big|\frac{1}{\e_n} \nabla_3 u_n\right) dx\nonumber\\
& = & \lim_{k \to +\infty} \lim_{n \to +\infty} \rho_k \int_{Q'_\nu
\times I} W\left(\nabla_\a u_n(x_0 +\rho_k y_\alpha, y_3)\Big|
\frac{1}{\e_n} \nabla_3 u_n(x_0+\rho_k y_\alpha, y_3)\right) dy\nonumber\\
& = & \lim_{k \to +\infty} \lim_{n \to +\infty} \rho_k \int_{Q'_\nu
\times I} W\left(\frac{1}{\rho_k} \left(\nabla_\a
v_{n,k}\Big|\frac{\rho_k}{\e_n} \nabla_3 v_{n,k}\right)
\right)dy,\label{diag4}
\end{eqnarray}
where $v_{n,k}(y):= u_n(x_0+\rho_k y_\alpha,y_3)$. Set
$$v_0(y):=\left\{
\begin{array}{ll}
u^+(x_0) & \hbox{ if } y_\a \cdot \nu>0\\
u^-(x_0) & \hbox{ if } y_\a \cdot \nu\leq 0.
\end{array}
\right.$$ As $x_0 \in J_u$ and $u_n \to u$ in $L^1(\O;\Rb^3)$, it
follows that
\begin{equation}\label{diag5}
\lim_{k \to +\infty} \lim_{n \to +\infty}\|v_{n,k}-
v_0\|_{L^1(Q'_\nu \times I;\Rb^3)} =0.
\end{equation}
 Since $(1/\e_n)\int_I \nabla_3 u_n(\cdot,x_3)\, dx_3
\xrightharpoonup[]{*} \ovb$ in $\M(\o;\mathbb R^3)$, we deduce that
for every $\varphi \in \C_0(Q'_\nu;\Rb^3)$, $$\lim_{n \to +\infty}
\int_{Q'_\nu} \left( \frac{\rho_k}{\e_n} \int_I \nabla_3
v_{n,k}(y_\a,y_3)\, dy_3 \right) \cdot \varphi(y_\a)\, dy_\a=
\frac{1}{\rho_k} \int_{Q'_\nu(x_0,\rho_k)}
\varphi\left(\frac{x_\a-x_0}{\rho_k}\right)\cdot d\overline
b(x_\a).$$ Using the fact that $x_0$ is a Lebesgue point of
$\frac{d\ovb}{d\HH^1\res\, J_u}$ together with (\ref{bj}),
(\ref{rect}) and (\ref{approx}) we infer that
\begin{eqnarray}\label{diag6}\lim_{k \to +\infty}\lim_{n \to +\infty}
\int_{Q'_\nu} \left( \frac{\rho_k}{\e_n} \int_I \nabla_3
v_{n,k}(y_\a,y_3)\, dy_3 \right) \cdot \varphi(y_\a)\, dy_\a & = &
\frac{d\ovb}{d\HH^1\res\, J_u}(x_0)\cdot \int_{\pi_\nu}\varphi\,
d\HH^1.
\end{eqnarray}
Moreover, as for the bulk part, using the fact that
$\lambda(\partial Q'_\nu(x_0,\rho_k))=0$ for every $k \in \Nb$, we
have that
\begin{equation}\label{diag7}
\lim_{k \to +\infty} \lim_{n \to +\infty} \frac{\rho_k}{\e_n}
\int_{Q'_\nu \times I} \nabla_3 v_{n,k}\, dy=
\frac{d\ovb}{d\HH^1\res\, J_u}(x_0).
\end{equation}
Using again the separability of $\C_0(Q'_\nu;\Rb^3)$ together with a
diagonalization argument, from (\ref{diag4}), (\ref{diag5}),
(\ref{diag6}) and (\ref{diag7}), we obtain the existence of
sequences $\bar v_k:=v_{n_k,k} \in W^{1,1}(Q'_\nu \times I;\Rb^3)$
and $\d_k:=\e_{n_k}/\rho_k$ with the properties that $\d_k \to 0$,
$\bar v_k \to v_0$ in $L^1(Q'_\nu \times I;\Rb^3)$, $(1/\d_k) \int_I
\nabla_3 \bar v_k(\cdot,x_3)\, dx_3 \xrightharpoonup[]{*}
\frac{d\ovb}{d\HH^1\res\, J_u}(x_0)\HH^1\res\, \pi_\nu$ in
$\M(Q'_\nu;\Rb^3)$,
\begin{equation}\label{diag7bis}
\frac{1}{\d_k}\int_{Q'_\nu \times I} \nabla_3 \bar v_k\, dy \to
\frac{d\ovb}{d\HH^1\res\, J_u}(x_0),
\end{equation}
and
$$\frac{d\mu^j}{d\HH^1 \res \, J_u}(x_0) =\lim_{k \to +\infty} \rho_k \int_{Q'_\nu
\times I} W\left(\frac{1}{\rho_k} \left(\nabla_\a \bar v_k \Big|
\frac{1}{\d_k} \nabla_3 \bar v_k\right) \right)dy.$$ By the
coercivity condition $(H_1)$ and the previous relation, it follows
that the sequence of scaled gradients $\{(\nabla_\a \bar
v_k|(1/\d_k) \nabla_3 \bar v_k)\}$ is uniformly bounded in
$L^1(Q'_\nu \times I;\Rb^{3 \times 3})$. Thus, using $(H_2)$ and the
fact that the recession function $W^\infty$ is positively
$1$-homogeneous, we obtain that
\begin{eqnarray*}
&&\rho_k \int_{Q'_\nu \times I} \left| W\left(\frac{1}{\rho_k}
\left(\nabla_\a \bar v_k\Big| \frac{1}{\d_k} \nabla_3 \bar
v_k\right) \right) - W^\infty\left(\frac{1}{\rho_k} \left(\nabla_\a
\bar v_k\Big| \frac{1}{\d_k}
\nabla_3 \bar v_k\right) \right) \right| dy\\
&& \hspace{1cm} \leq  C \int_{Q'_\nu \times I} \left(\rho_k +
\rho_k^r \left|\left(\nabla_\a \bar v_k \Big|\frac{1}{\d_k} \nabla_3 \bar v_k\right)\right|^{1-r}\right)\, dy\\
&& \hspace{1cm} \leq  C\rho_k + C\rho_k^r \|(\nabla_\a \bar v_k
|(1/\d_k) \nabla_3 \bar v_k)\|^{1-r}_{L^1(Q'_\nu \times I;\Rb^{3
\times 3})} \to 0
\end{eqnarray*}
where we applied H\"older's Inequality. As a consequence
$$\frac{d\mu^j}{d\HH^1 \res \, J_u}(x_0) =  \lim_{k \to +\infty}
\int_{Q'_\nu \times I} W^\infty \left(\nabla_\a \bar
v_k\Big|\frac{1}{\d_k} \nabla_3 \bar v_k\right)dy.$$

Since $\HH^1(\pi_\nu \cap \partial Q'_\nu)=0$, we can apply Lemma
\ref{tracegamma} (with $W^\infty$ instead of $W$) so that, up to an
extraction, there is no loss of generality to assume that $T\bar
v_k=Tv_0$. Define
$$\varphi_k(x_\a,x_3):=\bar v_k(x_\a,x_3) +\d_k x_3
\left(\frac{d\ovb}{d\HH^1\res\, J_u}(x_0) - \frac{1}{\d_k}
\int_{Q'_\nu \times I}\nabla_3 \bar v_k(y)\, dy \right),$$ and
denote by $\varphi_k^{\pm \nu}$ the trace of $\varphi_k$ on the
faces $\{(x_\a,x_3) \in Q'_\nu \times I : x_\a \cdot \nu = \pm
1/2\}$. Then $\varphi_k \in W^{1,1}(Q'_\nu \times I;\Rb^3)$ is
$1$-periodic in the direction $\tau$ (where $\tau \in \mathbb S^1$
is such that $(\tau,\nu)$ is an orthonormal basis of $\Rb^2$),
$\varphi_k^{+\nu}-\varphi_k^{-\nu}=u^+(x_0)-u^-(x_0)$ and
$$\frac{1}{\d_k} \int_{Q'_\nu \times I} \nabla_3 \varphi_k\, dy = \frac{d\ovb}{d\HH^1\res\,
J_u}(x_0).$$ In particular, $\varphi_k$ is an admissible test
function for
$\g\Big(u^+(x_0)-u^-(x_0),\nu_u(x_0),\frac{d\ovb}{d\HH^1\res\,
J_u}(x_0)\Big)$ and using the Lipschitz condition (\ref{Winftylip})
satisfied by $W^\infty$, we infer that
\begin{eqnarray*}\frac{d\mu^j}{d\HH^1 \res \, J_u}(x_0) & \geq & \limsup_{k \to
+\infty} \int_{Q'_\nu \times I} W^\infty \left(\nabla_\a
\varphi_k\Big| \frac{1}{\d_k} \nabla_3 \varphi_k\right) dy\\
&& - L \limsup_{k \to +\infty}\left|\frac{d\ovb}{d\HH^1\res\,
J_u}(x_0) - \frac{1}{\d_k} \int_{Q'_\nu \times I}\nabla_3 \bar v_k\,
dy \right|.\end{eqnarray*}  From (\ref{diag7bis}), it follows
that the last term of the previous relation is actually zero. Hence
$$\frac{d\mu^j}{d\HH^1 \res \, J_u}(x_0) \geq
\g\left(u^+(x_0)-u^-(x_0),\nu_u(x_0),\frac{d\ovb}{d\HH^1\res\,
J_u}(x_0)\right),$$ and consequently by virtue of (\ref{Wlip}) and
Proposition \ref{denssurf} it results that
$$\frac{d\mu^j}{d\HH^1 \res \, J_u}(x_0) \geq (\mathcal Q^\ast
W)^{\infty}\left((u^+(x_0)-u^-(x_0)) \otimes
\nu_u(x_0),\frac{d\ovb}{d\HH^1\res\, J_u}(x_0)\right).$$

\vskip5pt

\noindent {\bf Proof of (\ref{muc}).} Fix a point $x_0 \in \o$ such
that the matrix
\begin{equation}\label{cantor2}
A(x_0):=\frac{dD_\a u}{d|D_\a u|}(x_0) \text{ has rank one and }
|A(x_0)|=1,
\end{equation}
the Radon-Nikod\'ym derivative of $\mu$ and $\ovb$ with respect to
$|D_\a^c u|$ exist and are finite,
\begin{equation}\label{cantor3}
\frac{d|\mu-\mu^c|}{d|D_\a^c u|}(x_0)=
\frac{d|\ovb-\ovb^c|}{d|D_\a^c u|}(x_0)=0,
\end{equation}
\begin{equation}\label{cantor4}
\frac{d|D_\a u|}{d|D_\a^c u|}(x_0)=1
\end{equation}
and
\begin{equation}\label{cantor5}
\lim_{\rho \to 0^+} \frac{|D_\a u|(Q'(x_0,\rho))}{\rho}=0, \quad
\lim_{\rho \to 0^+} \frac{|D_\a u|(Q'(x_0,\rho))}{\rho^2}=+\infty.
\end{equation}
Assume also that for every $t \in (0,1)$,
\begin{equation}\label{cantor6}
\liminf_{\rho \to 0^+} \frac{|D_\a u|(Q'(x_0,\rho) \setminus
Q'(x_0,t\rho))}{|D_\a u|(Q'(x_0,\rho))} \leq 1 - t^2.
\end{equation}
Note that $|D_\a^c u|$ almost every points $x_0$ in $\o$ satisfy
these properties. Indeed, (\ref{cantor2}) is a consequence of
Alberti's Rank One Theorem (see \cite{A}); properties
(\ref{cantor3}) come from the fact that $|\mu-\mu^c|$ and
$|\ovb-\ovb^c|$ are singular with respect to $|D_\a^c u|$; property
(\ref{cantor4}) is due to the Besicovitch Differentiation Theorem;
both relations in (\ref{cantor5}) are obtained from
\cite[Proposition 3.92]{AFP} and finally, property (\ref{cantor6})
is proved in \cite[Lemma 2.13]{FM}.

Since $A(x_0)$ has rank one, there exists $a \in \Rb^3$ and $\nu \in
\mathbb S^1$ such that $A(x_0):=a \otimes \nu$. We may assume
without loss of generality that $\nu=e_2$.

Fix $t \in (0,1)$ arbitrarily close to $1$ and thanks to
(\ref{cantor6}), choose a sequence $\{\rho_k\} \searrow 0^+$ such
that
\begin{equation}\label{cantor7}
\limsup_{k \to +\infty} \frac{|D_\a u|(Q'(x_0,\rho_k) \setminus
Q'(x_0,t\rho_k))}{|D_\a u|(Q'(x_0,\rho_k))} \leq 1 - t^2.
\end{equation}
Moreover, up to a subsequence (still denoted $\{\rho_k\}$), it is
possible to find $\nu^c \in {\rm Tan}(|D^c_\a u|,x_0)$ (depending on
$t$), {\it i.e.},
\begin{equation}\label{cantor6bis} \lim_{k \to +\infty}
\frac{1}{|D^c_\a u|(Q'(x_0,\rho_k))}
\int_{\Rb^2}\varphi\left(\frac{x_\a-x_0}{\rho_k} \right)\, d|D^c_\a
u|(x_\a)=\int_{\Rb^2} \varphi(x_\a)\, d\nu^c(x_\a),
\end{equation}
for all $\varphi \in \C_c(\Rb^2)$. Fix $\g \in (t,1)$, then by
(\ref{cantor3}) and (\ref{cantor4}),
\begin{eqnarray}\label{eqc2}
\frac{d\mu^c}{d|D_\a^c u|}(x_0) & = & \frac{d\mu}{d|D_\a^c
u|}(x_0)=\lim_{k \to +\infty}\frac{\mu(Q'(x_0,\rho_k))}{|D_\a^c
u|(Q'(x_0,\rho_k))}=\lim_{k \to
+\infty}\frac{\mu(Q'(x_0,\rho_k))}{|D_\a u|(Q'(x_0,\rho_k))}\nonumber\\
& \geq &\limsup_{k \to +\infty}\limsup_{n \to +\infty}
\frac{1}{|D_\a u|(Q'(x_0,\rho_k))}\int_{Q'(x_0,\g \rho_k) \times I}
W\left(\nabla_\a u_n\Big|\frac{1}{\e_n} \nabla_3 u_n \right)\, dx.
\end{eqnarray}
Define
$$\left\{
\begin{array}{l}
\ds z_k(x_\a):=\frac{\rho_k}{|D_\a u|(Q'(x_0,\rho_k))}\left[
u(x_0+\rho_k x_\a)
- \int_{Q'} u(x_0+\rho_k y_\a)\, dy_\a \right],\\
\ds w_{n,k}(x_\a,x_3):=\frac{\rho_k}{|D_\a u|(Q'(x_0,\rho_k))}\left[
u_n (x_0+\rho_kx_\a,x_3) - \int_{Q' \times I}
u_n(x_0+\rho_ky_\a,y_3)\, dy \right].
\end{array}
\right.$$ Changing variable in (\ref{eqc2}) and setting
$$t_k:=\frac{|D_\a u|(Q'(x_0,\rho_k))}{\rho_k^2},$$ we get that
\begin{equation}\label{eqc1}\frac{d\mu^c}{d|D_\a^c u|}(x_0)\geq \limsup_{k \to +\infty}\limsup_{n \to
+\infty} \frac{1}{t_k}\int_{(\g Q') \times I} W\left(t_k
\left(\nabla_\a w_{n,k} \Big|\frac{\rho_k}{\e_n} \nabla_3
w_{n,k}\right)\right) dx.\end{equation} Using the fact that $u_n \to
u$ in $L^1(\O;\Rb^3)$ we obtain
\begin{equation}\label{eqc3}
\lim_{k \to +\infty}\lim_{n \to +\infty}\|w_{n,k}-z_k\|_{L^1(Q'
\times I;\Rb^3)}=0.\end{equation} As $\int_{Q'}z_k\, dx_\a =0$ and
$|D_\a z_k|(Q')=1$, it follows that the sequence $\{z_k\}$ is
relatively compact in $L^1(Q';\Rb^3)$ and by \cite[Theorem
3.95]{AFP}, any limit function $w$ is representable by
$$w(x_\a)=a\, \theta(x_2)$$
for some increasing function $\theta \in BV(-1/2,1/2)$ (recall that
we assumed $\nu=e_2$). Hence, using (\ref{eqc3}) it follows that
\begin{equation}\label{eqc4}
\lim_{k \to +\infty}\lim_{n \to +\infty}\|w_{n,k}-w\|_{L^1(Q' \times
I;\Rb^3)}=0.
\end{equation}
Now take $\varphi \in \C_0(Q';\Rb^3)$, then changing variables 
using the fact that $(1/\e_n)\int_I\nabla_3 u_n(\cdot,y_3)\, dy_3
\xrightharpoonup[]{*} \ovb$ in $\M(\o;\Rb^3)$ together with
(\ref{cantor3}), (\ref{cantor4}) and (\ref{cantor6bis}), it follows
that
\begin{equation}\label{eqc5} \lim_{k \to +\infty}\lim_{n \to
+\infty}\int_{Q'}\varphi(x_\a) \cdot \left(\frac{\rho_k}{\e_n}
\int_I \nabla_3 w_{n,k}(x_\a,x_3)\, dx_3 \right) dx_\a =
\frac{d\ovb}{d|D_\a^c u|}(x_0) \cdot \int_{Q'}\varphi(x_\a)\,
d\nu^c(x_\a).
\end{equation} Gathering (\ref{eqc1}), (\ref{eqc4}) and
(\ref{eqc5}), the separability of $\C_0(Q';\Rb^3)$ together with a
standard diagonalization argument, it leads to the existence of a
subsequence $n_k \nearrow +\infty$ such that, setting $\bar
w_k:=w_{n_k,k}$ and $\d_k:=\e_{n_k}/\rho_k$, then $\d_k \searrow
0^+$, $\bar w_k \to w$ in $L^1(Q' \times I;\Rb^3)$, $\frac{1}{\d_k}
\int_I\nabla_3 \bar w_k(\cdot,x_3)\, dx_3 \xrightharpoonup[]{*}
\frac{d\ovb}{d|D_\a^c u|}(x_0)\nu^c$ in $\M(Q';\Rb^3)$ and
\begin{equation}\label{eqc6}\frac{d\mu^c}{d|D_\a^c u|}(x_0) \geq \limsup_{k \to +\infty}
\frac{1}{t_k}\int_{(\g Q') \times I} W\left(t_k \left(\nabla_\a \bar
w_k \Big|\frac{1}{\d_k} \nabla_3 \bar w_k\right)\right)
dx.\end{equation} We may also assume without loss of generality that
$$\left|\frac{1}{\d_k} \int_I\nabla_3 \bar w_k(\cdot,x_3)\,
dx_3\right| \xrightharpoonup[]{*} \lambda^c \quad \text{ in
}\M(Q')$$ for some non negative Radon measure $\lambda^c \in
\M(Q')$.

Thanks to the coercivity condition $(H_1)$, the sequence of scaled
gradients $\{(\nabla_\a \bar w_k|(1/\d_k) \nabla_3 \bar w_k)\}$ is
uniformly bounded in $L^1((\g Q') \times I;\Rb^{3 \times 3})$. Thus
using hypothesis $(H_2)$ and H\"older's Inequality, we get that
\begin{eqnarray*}
&&\frac{1}{t_k}\int_{(\g Q') \times I}\left|W^\infty\left(t_k
\left(\nabla_\a \bar w_k \Big|\frac{1}{\d_k} \nabla_3 \bar
w_k\right)\right) - W\left(t_k \left(\nabla_\a \bar w_k
\Big|\frac{1}{\d_k}
\nabla_3 \bar w_k\right)\right)\right|\, dx\\
&&\hspace{2cm} \leq \frac{C}{t_k} +\frac{C}{t_k^r}\int_{(\g Q')
\times I}\left|\left(\nabla_\a \bar w_k \Big|\frac{1}{\d_k} \nabla_3 \bar w_k\right)\right|^{1-r}\, dx\\
&&\hspace{2cm} \leq \frac{C}{t_k}
+\frac{C}{t_k^r}\left\|\left(\nabla_\a \bar w_k |(1/\d_k) \nabla_3
\bar w_k\right)\right\|_{L^1((\g Q') \times I;\Rb^{3 \times
3})}^{1-r} \to 0,
\end{eqnarray*}
where we used the fact that, thanks to (\ref{cantor5}), $t_k  \to
+\infty$. But as $W^\infty$ is positively $1$-homogeneous, we get
from (\ref{eqc6}) $$\frac{d\mu^c}{d|D_\a^c u|}(x_0)\geq \limsup_{k
\to +\infty} \int_{(\g Q') \times I} W^\infty\left(\nabla_\a \bar
w_k \Big|\frac{1}{\d_k} \nabla_3 \bar w_k\right) dx.$$

Extend $\theta$ continuously to $\Rb$ by the value of its trace at
$\pm1/2$. Let $\varrho_k$ be a usual sequence of (one dimensional)
mollifiers and set
$$\tilde w_k(x_\a,x_3):= a (\theta * \varrho_k)(x_2) + \d_k x_3
\frac{d\ovb}{d|D_\a^c u|}(x_0).$$ Then $\tilde w_k \in W^{1,1}(Q'
\times I;\Rb^3)$, $\tilde w_k \to w$ in $L^1(Q' \times I;\Rb^3)$ and
$\frac{1}{\d_k} \int_I \nabla_3 \tilde w_k\,
dx_3=\frac{d\ovb}{d|D_\a^c u|}(x_0)$ for each $k \in \Nb$. Thus
$z_k-\tilde w_k \to 0$ in $L^1(Q' \times I;\Rb^3)$ and
\begin{equation}\label{eqc8} D_\a z_k((sQ') \times I) - D_\a \tilde
w_k((sQ') \times I) \to 0\end{equation} except at most for countably
many $s \in (0,1)$. Fix $s \in (t,\g)$ so that (\ref{eqc8}) holds
and $\lambda^c(\partial(sQ'))=0$. Using a standard cut-off function
argument, we may assume without loss of generality that $\bar
w_k=\tilde w_k$ on a neighborhood of $\partial (sQ') \times I$ and
\begin{equation}\label{eqc7}\frac{d\mu^c}{d|D_\a^c u|}(x_0)\geq \limsup_{k \to +\infty}
\int_{(sQ') \times I} W^\infty\left(\nabla_\a \bar w_k
\Big|\frac{1}{\d_k} \nabla_3 \bar w_k\right) dx.\end{equation} We
now compute
\begin{equation}\label{eqc9}D_\a z_k(sQ')=\frac{D_\a u(Q'(x_0,s
\rho_k))}{|D_\a u|(Q'(x_0,\rho_k))} \quad \text{ and } D_\a \tilde
w_k((sQ') \times I)=sA_k\end{equation} where
$$A_k:=a \otimes e_2 [(\theta * \varrho_k)(s/2) - (\theta * \varrho_k)(-s/2)].$$
Note that by (\ref{cantor2}), (\ref{cantor7}), (\ref{eqc8}) and
(\ref{eqc9}), we have that
\begin{eqnarray}\label{lastlast}
\limsup_{k \to +\infty}|s A_k-A(x_0)| & = &  \limsup_{k
\to +\infty}|D_\a \tilde w_k((sQ') \times I) -A(x_0)| \nonumber\\
& = & \limsup_{k \to +\infty}|D_\a z_k((sQ') \times I) -A(x_0)|\nonumber\\
& = & \limsup_{k \to +\infty} \left|\frac{D_\a
u(Q'(x_0,s\rho_k))}{|D_\a u|(Q'(x_0,\rho_k))} - A(x_0)\right|\nonumber\\
& \leq & \limsup_{k \to +\infty}\frac{|D_\a u| (Q'(x_0,\rho_k)
\setminus Q'(x_0,s\rho_k))}{|D_\a u|(Q'(x_0,\rho_k))}\nonumber\\
&& + \limsup_{k \to +\infty} \left|\frac{D_\a
u(Q'(x_0,\rho_k))}{|D_\a u|(Q'(x_0,\rho_k))} - A(x_0)\right|\leq
1-t^2.
\end{eqnarray}
We now define our last sequence
$$\varphi_k(x_\a,x_3):=\bar w_k(s\, x_\a,x_3)-s A_k  x_\a + \d_k x_3
\left(\frac{d\ovb}{d|D_\a^c u|}(x_0) - \frac{1}{\d_k} \int_{Q'
\times I} \nabla_3 \bar w_k(s\, y_\a,y_3)\, dy\right).$$ As $\bar
w_k=\tilde w_k$ on $\partial (sQ') \times I$ and $\tilde w_k$
depends only on $(x_2,x_3)$, it is clear from the definition of
$A_k$ that $\varphi_k$ is $1$-periodic in the direction $e_1$. A
simple computation shows that for a.e. $x_1$ and $x_3 \in I$, then
$\varphi_k(x_1,-1/2,x_3)=\varphi_k(x_1,1/2,x_3)$ so that $\varphi_k$
is also $1$-periodic in the $e_2$ direction. Moreover we have that
$$\frac{1}{\d_k} \int_{Q' \times I} \nabla_3\varphi_k(y)\, dy
=\frac{d\ovb}{d|D_\a^c u|}(x_0).$$ Hence using (\ref{eqc7}), the
Lipschitz condition (\ref{Winftylip}) satisfied by $W^\infty$ and a
change of variable, we obtain that
\begin{eqnarray*}
\frac{d\mu^c}{d|D_\a^c u|}(x_0) & \geq  & \limsup_{k \to +\infty}
s\int_{Q' \times I} W^\infty\left(A(x_0) + \nabla_\a \varphi_k
\Big|\frac{s}{\d_k} \nabla_3 \varphi_k\right) dx\\
&& - L s^2 \limsup_{k \to +\infty}\left|\frac{d\ovb}{d|D_\a^c
u|}(x_0) - \frac{1}{\d_k s^2} \int_{(sQ') \times I} \nabla_3 \bar
w_k(y)\, dy\right|- L s\limsup_{k \to +\infty}|s A_k-A(x_0)|.
\end{eqnarray*} But since $\lambda^c(\partial (sQ'))=0$, it follows that
$$\frac{1}{\d_k}\int_{(sQ') \times I}\nabla_3 \bar w_k\, dx \to
\frac{d\ovb }{d|D^c_\a u|}(x_0) \nu^c(sQ')$$ and from
(\ref{lastlast}) that
\begin{equation}\label{1022}
\frac{d\mu^c}{d|D_\a^c u|}(x_0) \geq s\mathcal
Q^*(W^\infty)\left(A(x_0)\Big|\frac{d\ovb}{d|D_\a^c u|}(x_0)\right)
- L\left|\frac{d\ovb}{d|D_\a^c u|}(x_0) \right| |s^2 - \nu^c(sQ')|-
Ls(1-t^2).\end{equation} From (\ref{cantor7}) we infer that
$\nu^c(sQ') \geq \nu^c(\overline{tQ'}) \geq t^2$ and thus
$$t^2 - s^2 \leq \nu^c(sQ') - s^2 \leq 1-s^2.$$
Passing to the limit first as $s \to 1^-$ and then as $t \to 1^-$,
we deduce from (\ref{1022}) that
$$\frac{d\mu^c}{d|D_\a^c u|}(x_0) \geq \mathcal
Q^*(W^\infty)\left(A(x_0)\Big|\frac{d\ovb}{d|D_\a^c
u|}(x_0)\right)$$ and (\ref{muc}) follows from Proposition
\ref{denssurf}.

\vskip5pt

\noindent {\bf Proof of (\ref{mus}).} Let $x_0 \in \o$ be such that
the Radon-Nikod\'ym derivative of $\mu$ and $\ovb$ at $x_0$ with
respect to $|\ovb^\sigma|$ exist and are finite, such that
\begin{equation}\label{singular1}
\frac{d|\mu-\mu^\sigma|}{d|\ovb^\sigma|}(x_0)=\frac{d|\ovb-\ovb^\sigma|}{d|\ovb^\sigma|}(x_0)=0,
\end{equation}
and such that
\begin{equation}\label{singular2}
\frac{d\LL^2}{d|\ovb^\sigma|}(x_0)= \frac{d|D_\a
u|}{d|\ovb^\sigma|}(x_0)=0.
\end{equation}
Note that since $|\ovb^\sigma|$ is singular with respect to $\LL^2$,
$|D_\a u|$, $|\mu-\mu^\sigma|$ and $|\ovb-\ovb^\sigma|$, it turns
out that $|\ovb^\sigma|$ almost every points $x_0$ in $\o$ satisfy
these properties.

Let $\{\rho_k\} \searrow 0^+$ be such that $\lambda(\partial
Q'(x_0,\rho_k))=\mu(\partial Q'(x_0,\rho_k))=0$ for each $k \in \Nb$
and extract eventually a subsequence (still denoted $\{\rho_k\}$)
such that there exists $\nu^\sigma \in {\rm Tan}(|\overline
b^\sigma|,x_0)$, {\it i.e.}, \begin{equation}\label{approxsigma}
\lim_{k \to +\infty} \frac{1}{|\overline b^\sigma|(Q'(x_0,\rho_k))}
\int_{\Rb^2}\varphi\left(\frac{x_\a-x_0}{\rho_k} \right)\,
d|\overline b^\sigma|(x_\a)=\int_{\Rb^2} \varphi(x_\a)\,
d\nu^\sigma(x_\a) \quad \text{ for all }\varphi \in \C_c(\Rb^2).
\end{equation}
Then by (\ref{singular1}) and a change of variables
\begin{eqnarray}\label{eqs1}
\frac{d\mu^\sigma}{d|\ovb^\sigma|}(x_0) & = &
\frac{d\mu}{d|\ovb^\sigma|}(x_0)= \lim_{k \to
+\infty} \frac{\mu(Q'(x_0,\rho_k))}{|\ovb^\sigma|(Q'(x_0,\rho_k))}\nonumber\\
& = &\lim_{k \to +\infty}\lim_{n \to +\infty} \frac{1}{t_k}\int_{Q'
\times I} W\left(\nabla_\a u_n(x_0+\rho_k
y_\a,y_3)\Big|\frac{1}{\e_n} \nabla_3 u_n(x_0+\rho_k y_\a,y_3)
\right)\, dy,
\end{eqnarray}
where
$$t_k:=\frac{|\ovb^\sigma|(Q'(x_0,\rho_k))}{\rho_k^2}.$$  Define
$$\left\{\begin{array}{l}
\ds
\psi_{n,k}(x_\a,x_3):=\frac{\rho_k}{|\ovb^\sigma|(Q'(x_0,\rho_k))}\left[
u_n (x_0+\rho_k x_\a,x_3) - \int_{Q' \times I}
u_n(x_0+\rho_ky_\a,y_3)\, dy \right],\\
\ds \psi_k(x_\a):=\frac{\rho_k}{|\ovb^\sigma|(Q'(x_0,\rho_k))}\left[
u (x_0+\rho_k x_\a) - \int_{Q'} u(x_0+\rho_ky_\a)\, dy_\a
\right].\end{array}\right.$$ Replacing in (\ref{eqs1}), we get that
\begin{equation}\label{eqs2}\frac{d\mu^\sigma}{d|\ovb^\sigma|}(x_0)=\lim_{k \to +\infty}\lim_{n \to
+\infty} \frac{1}{t_k}\int_{Q' \times I} W\left(t_k \left(\nabla_\a
\psi_{n,k} \Big|\frac{\rho_k}{\e_n} \nabla_3
\psi_{n,k}\right)\right) dx.\end{equation} Using the fact that $u_n
\to u$ in $L^1(\O;\Rb^3)$ we obtain that $\psi_{n,k} \to \psi_k$ in
$L^1(Q' \times I;\Rb^3)$ as $n \to +\infty$. Moreover, as
$\int_{Q'}\psi_k\, dx_\a =0$ and by (\ref{singular2}),
$$ |D_\a \psi_k|(Q')=
\frac{|D_\a u|(Q'(x_0,\rho_k))}{|\ovb^\sigma|(Q'(x_0,\rho_k))} \to
0,$$ the Poincar\'e-Wirtinger Inequality implies that $\psi_k \to 0$
in $L^1(Q';\Rb^3)$, hence
\begin{equation}\label{eqs3}
\lim_{k \to +\infty}\lim_{n \to +\infty}\|\psi_{n,k}\|_{L^1(Q'
\times I;\Rb^3)}=0.\end{equation}  Consider $\varphi \in
\C_0(Q';\Rb^3)$, then changing variables and using the fact that
$(1/\e_n)\int_I\nabla_3 u_n(\cdot,y_3)\, dy_3 \xrightharpoonup[]{*}
\ovb$ in $\M(\o;\Rb^3)$ together with (\ref{singular1}) and
(\ref{approxsigma}), it follows that
\begin{equation}\label{eqs4} \lim_{k \to +\infty}\lim_{n \to
+\infty}\int_{Q'}\varphi(x_\a) \cdot \left(\frac{\rho_k}{\e_n}\int_I
\nabla_3 \psi_{n,k}(x_\a,x_3)\, dx_3 \right) dx_\a =
\frac{d\ovb}{d|\ovb^\sigma|}(x_0) \cdot \int_{Q'}\varphi\,
d\nu^\sigma.
\end{equation}
Moreover since $\lambda(\partial Q'(x_0,\rho_k))=0$ for every $k \in
\Nb$, we deduce that
\begin{equation}\label{1734}
\lim_{k \to +\infty} \lim_{n \to +\infty} \frac{\rho_k}{\e_n}
\int_{Q'\times I} \nabla_3 \psi_{n,k}\, dy=
\frac{d\ovb}{d|\ovb^\sigma|}(x_0).
\end{equation}
Gathering (\ref{eqs2}), (\ref{eqs3}), (\ref{eqs4}) and (\ref{1734}),
and using the separability of $\C_0(Q';\Rb^3)$ together with a
standard diagonalization argument leads to the existence of a
subsequence $\{n_k\} \nearrow +\infty$ such that, setting
$\phi_k:=\psi_{n_k,k}$ and $\d_k:=\e_{n_k}/\rho_k$, then $\d_k
\searrow 0^+$, $\phi_k \to 0$ in $L^1(Q' \times I;\Rb^3)$,
$\frac{1}{\d_k} \int_I\nabla_3 \phi_k(\cdot,x_3)\, dx_3
\xrightharpoonup[]{*} \frac{d\ovb}{d|\ovb^\sigma|}(x_0) \nu^\sigma$
in $\M(Q';\Rb^3)$,
\begin{equation}\label{1736}
\frac{1}{\d_k}\int_{Q' \times I} \nabla_3 \phi_k\, dy \to
\frac{d\ovb}{d|\ovb^\sigma|}(x_0)
\end{equation} and
\begin{equation}\label{eqs5}\frac{d\mu^\sigma}{d|\ovb^\sigma|}(x_0)= \lim_{k \to +\infty}
\frac{1}{t_k}\int_{Q' \times I} W\left(t_k \left(\nabla_\a \phi_k
\Big|\frac{1}{\d_k} \nabla_3 \phi_k\right)\right) dx.\end{equation}
By virtue of the coercivity condition $(H_1)$, the sequence of
scaled gradients $\{(\nabla_\a \phi_k|(1/\d_k) \nabla_3 \phi_k)\}$
is uniformly bounded in $L^1(Q' \times I;\Rb^{3 \times 3})$. Thus
using hypothesis $(H_2)$ and H\"older's Inequality, we get that
\begin{eqnarray*}
&&\frac{1}{t_k}\int_{Q' \times I}\left|W^\infty\left(t_k
\left(\nabla_\a \phi_k \Big|\frac{1}{\d_k} \nabla_3
\phi_k\right)\right) - W\left(t_k \left(\nabla_\a \phi_k
\Big|\frac{1}{\d_k}
\nabla_3 \phi_k\right)\right)\right| dx\\
&&\hspace{2cm} \leq \frac{C}{t_k} +\frac{C}{t_k^r}\int_{Q' \times
I}\left|\left(\nabla_\a \phi_k \Big|\frac{1}{\d_k} \nabla_3 \phi_k\right)\right|^{1-r}\, dx\\
&&\hspace{2cm} \leq \frac{C}{t_k}
+\frac{C}{t_k^r}\left\|\left(\nabla_\a \phi_k| (1/\d_k) \nabla_3
\phi_k\right)\right\|_{L^1(Q' \times I;\Rb^{3 \times 3})}^{1-r} \to
0,
\end{eqnarray*}
where we used the fact that, thanks to (\ref{singular2}), $t_k  \to
+\infty$. But as $W^\infty$ is positively $1$-homogeneous, we get
from (\ref{eqs5}) that $$\frac{d\mu^\sigma}{d|\ovb^\sigma|}(x_0)=
\lim_{k \to +\infty} \int_{Q' \times I} W^\infty\left(\nabla_\a
\phi_k \Big|\frac{1}{\d_k} \nabla_3 \phi_k\right)\, dx.$$  Using
Remark \ref{bdry}, we can assume without loss of generality that
$\phi_k=0$ on a neighborhood of $\partial Q' \times I$. We now
define
$$\tilde \phi_k(x_\a,x_3):= \phi_k(x_\a,x_3) +\d_k x_3
\left(\frac{d\ovb}{d|\ovb^\sigma|}(x_0) - \frac{1}{\d_k} \int_{Q'
\times I}\nabla_3 \phi_k(y)\, dy\right).$$ Then, $\tilde \phi_k \in
W^{1,1}(Q' \times I;\Rb^3)$, $\tilde \phi_k(\cdot,x_3)$ is
$Q'$-periodic for $\LL^1$-a.e. $x_3 \in I$ and $$\frac{1}{\d_k}
\int_{Q' \times I} \nabla_3\tilde \phi_k\, dy =
\frac{d\ovb}{d|\ovb^\sigma|}(x_0).$$ Hence $\tilde \phi_k$ is an
admissible test function for $\mathcal
Q^*(W^\infty)\Big(0\big|\frac{d\ovb}{d|\ovb^\sigma|}(x_0)\Big)$ and
using the Lipschitz property (\ref{Winftylip}), we get that
$$\frac{d\mu^\sigma}{d|\ovb^\sigma|(x_0)} \geq \limsup_{k \to +\infty} \int_{Q' \times I}
W\left(\nabla_\a \tilde \phi_k\Big|\frac{1}{\d_k}  \nabla_3 \tilde
\phi_k\right) dy - L \limsup_{k \to +\infty}
\left|\frac{d\ovb}{d|\ovb^\sigma|}(x_0) - \frac{1}{\d_k} \int_{Q'
\times I}\nabla_3 \phi_k(y)\, dy\right|.$$  Finally, relation
(\ref{1736}) ensures that the last term in the previous inequality
is actually zero and thus, from Proposition \ref{denssurf},
$$\frac{d\mu^\sigma}{d|\ovb^\sigma|}(x_0) \geq
\mathcal Q^*(W^\infty)\left(0\;
\Big|\frac{d\ovb}{d|\ovb^\sigma|}(x_0)\right) = (\mathcal Q^*
W)^\infty \left(0\;\Big|\frac{d\ovb}{d|\ovb^\sigma|}(x_0)\right).$$

\section{The upper bound}

\begin{Lemma}\label{upperbound}
For any $(u,\ovb) \in BV(\o;\Rb^3) \times \M(\o;\Rb^3)$, then
$J_{\{\e_n\}}(u,\ovb,\o) \leq E(u,\ovb)$.
\end{Lemma}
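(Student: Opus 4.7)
The strategy is to reduce the general case to the Sobolev case of Remark \ref{sobolevcase} by an approximation and diagonalization argument, exploiting a Reshetnyak-type representation of $E$. First, when $u \in W^{1,1}(\o;\Rb^3)$ and $\ovb = \bar b\,\LL^2$ with $\bar b \in L^1(\o;\Rb^3)$, Remark \ref{sobolevcase} yields $J_{\{\e_n\}}(u,\ovb,\o) = \int_\o \mathcal Q^*W(\nabla_\a u|\bar b)\,dx_\a$, which coincides with $E(u,\ovb)$ in this case; this gives both the upper bound and an explicit recovery sequence in the Sobolev setting.

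For general $(u,\ovb) \in BV(\o;\Rb^3)\times \M(\o;\Rb^3)$, I would first rewrite $E$ more compactly. Introducing the joint vector measure $\mu := (D_\a u, \ovb) \in \M(\o;\Rb^{3\times 3})$, its Lebesgue decomposition reads $\mu = \mu^a\,\LL^2 + \mu^s$ with $\mu^a = (\nabla_\a u, d\ovb/d\LL^2)$, and $\mu^s$ splits according to the Besicovitch decomposition into the jump part $(D_\a^j u,\ovb^j)$, the Cantor part $(D_\a^c u,\ovb^c)$, and the pure-$\ovb$-singular part $(0,\ovb^\sigma)$. Since $(\mathcal Q^*W)^\infty$ is positively $1$-homogeneous, the three singular integrals in (\ref{gammarep1}) collapse and
\begin{equation*}
E(u,\ovb) \;=\; \int_\o \mathcal Q^*W(\mu^a)\,dx_\a \;+\; \int_\o (\mathcal Q^*W)^\infty\!\left(\frac{d\mu^s}{d|\mu^s|}\right)\,d|\mu^s|.
\end{equation*}

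I would then approximate $(u,\ovb)$ by smooth $(u_k,\bar b_k)$ through mollification (extending $u$ past $\partial \o$ as in the proof of Lemma \ref{lemma2.1bfmbend} and setting $\bar b_k = \ovb * \varrho_k$) so that the joint measure $\mu_k := (D_\a u_k, \bar b_k \LL^2)$ converges strictly to $\mu$. A Reshetnyak-type continuity theorem for Borel integrands with linear growth (see e.g.\ \cite{AFP}), applicable thanks to $(H_1)$, the Lipschitz bound (\ref{Wlip}), $(H_2)$, and the identification $(\mathcal Q^*W)^\infty = \mathcal Q^*(W^\infty)$ from Proposition \ref{denssurf}, then gives $E(u_k,\bar b_k\LL^2) \to E(u,\ovb)$. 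Combining with the Sobolev case and diagonalizing via the metrizability of the weak* topology on bounded subsets of $BV(\O;\Rb^3)$ and $\M(\o;\Rb^3)$ (as invoked before Lemma \ref{1557}) produces $v_n := v_n^{k(n)}$ with the desired weak* convergences and $\limsup_{n\to\infty}\int_\O W(\nabla_\a v_n|(1/\e_n)\nabla_3 v_n)\,dx \leq E(u,\ovb)$.

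The principal obstacle is securing joint strict convergence $\mu_k \to \mu$. Mollifying $u$ and $\ovb$ separately only yields marginal strict convergences, whereas $|\mu|\leq |D_\a u|+|\ovb|$ with equality precisely when the two marginals are mutually singular — and Example \ref{ex} shows this can indeed fail. The remedy is to adapt the mollification scales to the Besicovitch decomposition of $\ovb$ relative to $|D_\a u|$, mollifying jointly the parts of $\ovb$ that share their support with $D_\a u$ and treating $\ovb^\sigma$ separately on a disjoint support. A cleaner alternative that bypasses global strict convergence entirely is to mirror the lower bound's blow-up analysis: by Lemma \ref{lemma2.1bfmbend} the set function $J_{\{\e_n\}}(u,\ovb,\cdot)$ is a Radon measure absolutely continuous with respect to $\LL^2+|D_\a u|+|\ovb|$, so at a.e. point of each of $\LL^2$, $\HH^1 \res J_u$, $|D_\a^c u|$, and $|\ovb^\sigma|$ one constructs a local recovery ansatz from the definition (\ref{QastW}) of $\mathcal Q^*W$ and the cell formulae for $(\mathcal Q^*W)^\infty$ in Proposition \ref{denssurf}, then glues the local ansatzes through a Vitali covering argument to match the four Radon-Nikodym derivatives of $E$.
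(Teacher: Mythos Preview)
Your second, ``cleaner'' alternative is the correct framework and is exactly how the paper proceeds: by Lemma~\ref{lemma2.1bfmbend} the set function $J_{\{\e_n\}}(u,\ovb,\cdot)$ is already a Radon measure, so it suffices to bound its Radon--Nikod\'ym derivatives with respect to $\LL^2$, $|D_\a^s u|$ and $|\ovb^\sigma|$ separately (the paper treats jump and Cantor together). No Vitali covering or gluing is needed: for each small cube $Q'(x_0,\rho_k)$ one exhibits a \emph{local} test sequence that estimates $J_{\{\e_n\}}(u,\ovb,Q'(x_0,\rho_k))$ from above, then divides and sends $\rho_k\to 0$.

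There are, however, two substantive gaps in your analysis. First, the obstacle you raise for the global approach is a red herring: since $\nabla_\a(u*\varrho_k)=(D_\a u)*\varrho_k$, the pair $(D_\a u_k,\bar b_k\,\LL^2)$ \emph{is} the mollification of the joint measure $\mu=(D_\a u,\ovb)$, and by \cite[Theorem~2.2]{AFP} one automatically has $|\mu*\varrho_k|\xrightharpoonup[]{*}|\mu|$, hence strict (and even area--strict) convergence on sets with $|\mu|$--null boundary. The genuine obstruction is elsewhere: the ``Reshetnyak--type continuity theorem for linear growth integrands'' you invoke is not in \cite{AFP} (Theorem~2.39 there is only the positively $1$--homogeneous version) and in any case requires the recession function to exist as a \emph{limit}, not merely a $\limsup$. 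Proposition~\ref{denssurf} only gives $(\mathcal Q^*W)^\infty=\mathcal Q^*(W^\infty)$ on rank--one directions $(z\otimes\nu|b)$, so you cannot apply such a theorem to $\mathcal Q^*W$ on all of $\Rb^{3\times 3}$.

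Second, and this is the missing idea, the paper bypasses this issue via the auxiliary function
\[
f(\xi):=\sup_{t>0}\frac{\mathcal Q^*W(t\xi)-\mathcal Q^*W(0)}{t},
\]
which is positively $1$--homogeneous, Lipschitz, and satisfies $\mathcal Q^*W\leq f+\mathcal Q^*W(0)$. One then mollifies, uses the Sobolev case (Remark~\ref{sobolevcase}) together with the lower semicontinuity of $J_{\{\e_n\}}$, and applies the \emph{classical} Reshetnyak continuity theorem to $f$. The punchline is that, by the rank--one convexity established in Proposition~\ref{convex}, the difference quotients $t\mapsto (\mathcal Q^*W(t\xi)-\mathcal Q^*W(0))/t$ are nondecreasing along directions $\xi=(z\otimes\nu|b)$, so $f=(\mathcal Q^*W)^\infty$ there; Alberti's rank--one theorem then guarantees that $dD_\a u/d|D_\a^s u|$ lives precisely in this cone, which yields (\ref{Jjc}). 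The same device handles (\ref{Js}), while (\ref{Ja}) is done by a direct cell--formula construction as you suggest.
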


\begin{proof}
Let $(u,\ovb) \in BV(\o;\Rb^3) \times \M(\o;\Rb^3)$. As in the proof
of the lower bound, according to the Besicovitch Decomposition
Theorem, we can decompose $\ovb$ into the sum of three mutually
singular measures $\ovb^a$, $\ovb^s$ and $\ovb^\sigma$ such that
$\ovb=\ovb^a+\ovb^s+\ovb^\sigma$ where $\ovb^a \ll \LL^2$, $\ovb^s
\ll |D^s_\a u|$.

{\bf Step 1. }Assume first that $\partial \o$ is Lipschitz. Then by
the locality result Lemma \ref{lemma2.1bfmbend}, it is enough to
check that
\begin{eqnarray}
\frac{d J_{\{\e_n\}}(u,\ovb,\cdot)}{d\LL^2}(x_0) \leq \mathcal
Q^*W\left(\nabla_\a u(x_0)\Big|\frac{d\ovb}{d\LL^2}(x_0)\right)
\quad \text{ for
$\LL^2$-a.e. }x_0 \in \o,\label{Ja}\\
\frac{d J_{\{\e_n\}}(u,\ovb,\cdot)}{d|D_\a^s u|}(x_0) \leq (\mathcal
Q^*W)^\infty\left(\frac{dD_\a u}{d|D_\a^s
u|}(x_0)\Big|\frac{d\ovb}{d|D_\a^s u|}(x_0)\right) \quad \text{ for
$|D_\a^s u|$-a.e. }x_0 \in \o,\label{Jjc}\\
\frac{d J_{\{\e_n\}}(u,\ovb,\cdot)}{d|\ovb^\sigma|}(x_0) \leq
(\mathcal
Q^*W)^\infty\left(0\Big|\frac{d\ovb}{d|\ovb^\sigma|}(x_0)\right)
\quad \text{ for $|\ovb^\sigma|$-a.e. }x_0 \in \o.\label{Js}
\end{eqnarray}
Indeed, if the three previous properties hold, since
$J_{\{\e_n\}}(u,\ovb,\cdot)$ is (the trace of) a Radon measure
absolutely continuous with respect to $\LL^2+|D_\a u| + |\ovb|$, we
deduce that
\begin{eqnarray*} J_{\{\e_n\}}(u,\ovb,\o) & \leq &\int_\o \mathcal Q^*W\left(\nabla_\a
u\Big|\frac{d\ovb}{d\LL^2}\right) dx + \int_{J_u}(\mathcal Q^\ast
W)^\infty\left((u^+-u^-)\otimes \nu_u,\frac{d\ovb}{d\HH^1\res\,
J_u}\right)d\HH^1\\
&&+\int_\o (\mathcal Q^*W)^\infty\left(\frac{dD_\a u}{d|D_\a^c
u|}\Big|\frac{d\ovb}{d|D_\a^c u|}\right)d|D_\a^c u|+
\int_\o(\mathcal
Q^*W)^\infty\left(0\Big|\frac{d\ovb}{d|\ovb^\sigma|}\right)d|\ovb^\sigma|,
\end{eqnarray*}
where we used the fact that $D_\a^s u = (u^+-u^-)\otimes \nu_u
\HH^1\res\, J_u + D_\a^c u$ and that $(\mathcal Q^*W)^\infty$ is
positively $1$-homogeneous. We postpone the proof of the three above
inequalities to the end of the section.\\

{\bf Step 2.} Let us now explain how to remove the Lipschitz
condition on $\partial \o$. As in the proof of Lemma
\ref{lemma2.1bfmbend}, for every $k \in \Nb$, it is possible to find
an increasing sequence of open sets $\o_k \subset\subset \o_{k+1}
\subset\subset \o$ such that $\partial \o_k$ is Lipschitz and
$|\ovb|(\partial \o_k)=0$ for each $k\in \Nb$. By Step 1 and Lemma
\ref{tracegamma}, there exists a sequence $\{u^k_n\} \subset
W^{1,1}(\o_k \times I;\Rb^3)$ such that $Tu^k_n=Tu$ on $\partial
\o_k \times I$, $u^k_n \to u$ in $L^1(\o_k \times I;\Rb^3)$,
$\frac{1}{\e_n}\int_I \nabla_3 u^k_n(\cdot,x_3)\, dx_3
\xrightharpoonup[]{*} \ovb$ in $\M(\o_k;\Rb^3)$ as $n \to +\infty$
and
\begin{equation}\label{omega'2}\limsup_{n \to +\infty}\int_{\o_k \times I} W\left(\nabla_\a
u^k_n \Big|\frac{1}{\e_n}\nabla_3 u^k_n\right) dx \leq
E(u,\ovb,\o_k)+\frac{1}{k} \leq
E(u,\ovb,\o)+\frac{1}{k}.\end{equation} We now apply (a slight
variant of) \cite[Lemma 2.5]{BouFonMas} to get a sequence $\{v^k_n\}
\subset W^{1,1}(\o \setminus \overline \o_k;\Rb^3)$ such that $v^k_n
\to u$ in $L^1(\o \setminus \overline \o_k;\Rb^3)$, $Tv^k_n=Tu$ on
$\partial \o_k$ and $|D_\a v^k_n|(\o \setminus \overline \o_k) \to
|D_\a u|(\o \setminus \overline \o_k)$ as $n \to +\infty$. Indeed an
inspection of the proof of \cite[Lemma 2.5]{BouFonMas} shows that,
since we do not prescribe the value of the trace on $\partial \o$,
it is not necessary to assume $\partial \o$ to be Lipschitz. Define
$w^k_n(x_\a,x_3):=u^k_n(x_\a,x_3) \chi_{\o_k}(x_\a) + v^k_n(x_\a)
\chi_{\o \setminus \overline \o_k}(x_\a)$. As $Tu^k_n=Tv^k_n=Tu$ on
$\partial \o_k \times I$, the sequence $w^k_n \in
W^{1,1}(\O;\Rb^3)$, $$\lim_{k \to +\infty}\lim_{n \to
+\infty}\|w^k_n - u\|_{L^1(\O;\Rb^3)}=0, \quad \lim_{k \to
+\infty}\lim_{n \to +\infty}|D_\a v^k_n|(\o \setminus \overline
\o_k)=0$$ and for any $\varphi \in \C_0(\o;\Rb^3)$, we have
$$\lim_{k \to +\infty}\lim_{n \to +\infty}\int_\o \varphi(x_\a)\cdot
\left( \frac{1}{\e_n}\int_I \nabla_3 w_n^k(x_\a,x_3)\, dx_3\right)
dx_\a = \int_\o \varphi(x_\a)\cdot d\ovb(x_\a).$$ Using the
separability of $\C_0(\o;\Rb^3)$ and a standard diagonalization
procedure, we obtain the existence of a sequence $k_n \nearrow
+\infty$ such that, setting $w_n:=w_n^{k_n}$, then $w_n \to u$ in
$L^1(\O;\Rb^3)$, $\frac{1}{\e_n} \int_I \nabla_3 w_n(\cdot,x_3)\,
dx_3 \xrightharpoonup[]{*} \ovb$ in $\M(\o;\Rb^3)$, $|D_\a
v^{k_n}_n|(\o \setminus \overline \o_{k_n}) \to 0$ and by
(\ref{omega'2}),
\begin{equation}\label{omega'3}\limsup_{n \to +\infty}\int_{\o_{k_n} \times I} W\left(\nabla_\a
u^{k_n}_n \Big|\frac{1}{\e_n}\nabla_3 u^{k_n}_n\right) dx \leq
E(u,\ovb,\o).\end{equation} Using the growth condition $(H_1)$
together with (\ref{omega'3}), we get that
$$J_{\{\e_n\}}(u,\ovb,\o) \leq \limsup_{n \to +\infty}\int_{\O} W\left(\nabla_\a w_n
\Big|\frac{1}{\e_n}\nabla_3 w_n\right) dx \leq E(u,\ovb,\o)$$ which
concludes the proof of the upper bound.
\end{proof}

\vskip5pt

\noindent {\bf Proof of (\ref{Ja}).} Fix a point $x_0\in \o$ such
that \begin{equation}\label{abs1} \frac{d\ovb}{d\LL^2}(x_0), \quad
\frac{dJ_{\{\e_n\}}(u,\ovb,\cdot)}{d {\cal L}^2}(x_0), \quad
\frac{dD_\a u}{d {\cal L}^2}(x_0)= \nabla_\alpha u(x_0)
\end{equation} exist and are finite, which is also a Lebesgue
point of $u$, $\nabla_\alpha u$ and $\frac{d\ovb}{d\LL^2}$, a point
of approximate differentiability for $u$, and such that
\begin{equation}\label{abs2}
\frac{d|D_\a^s u|}{d{\cal L}^2}(x_0)= \frac{d|\ovb -\ovb^a
|}{d\LL^2}(x_0)=0.
\end{equation}
Observe that since ${\cal L}^2$ is singular with respect to $|D_\a^s
u|$ and $|\ovb- \ovb^a|$, then ${\cal L}^2$-a.e. $x_0\in \o$ satisfy
all the above requirements.

Let $\{\rho_k\} \searrow 0^+$ be such that $|D_\a u|(\partial
Q'(x_0,\rho_k)) = |\ovb|(\partial Q'(x_0,\rho_k))=0$ for each $k \in
\Nb$. Let $\eta>0$ and consider $\lambda>0$ and $\varphi \in
W^{1,1}(Q' \times I;\Rb^3)$ such that $\varphi(\cdot,x_3)$ is
$Q'$-periodic for $\LL^1$-a.e. $x_3 \in I$, $\lambda
\int_I\nabla_3\varphi\, dy =\frac{d\ovb}{d\LL^2}(x_0)$ and
$$\int_Q W(\nabla_\alpha u(x_0)+\nabla_\alpha \varphi| \lambda
\nabla_3 \varphi)\, dx  \leq \mathcal Q^* W\left(\nabla_\a
u(x_0)\Big|\frac{d\ovb}{d\LL^2}(x_0)\right)+\eta.$$ Then, defining
$\varphi_n : \Rb^2 \times I \to \Rb^3$ by
\begin{equation}\label{varphih}
\varphi_n(x_\alpha,x_3):= \lambda\e_n
\varphi\left(\frac{x_\a}{\lambda\e_n}, x_3\right),
\end{equation}
it results that
\begin{equation}\label{varphiconvergences}
\left\{
\begin{array}{l}
\varphi_n \to 0 \hbox{ in }  L^1(Q'(x_0,\rho_k)\times I;\Rb^3),\\[0.2cm]
\frac{1}{\e_n} \int_I \nabla_3 \varphi_n(\cdot,x_3)\, dx_3
\xrightharpoonup[]{*} \frac{d\ovb}{d\LL^2}(x_0)\LL^2  \hbox{ in }
{\cal M}(Q'(x_0;\rho_k);\Rb^3).
\end{array}
\right.
\end{equation}
Let $\{\varrho_n\}$ be a sequence of standard symmetric mollifiers
chosen in such a way that
\begin{equation}\label{mollifier}
\lim_{n \to +\infty}\e_n \int_{Q'(x_0,\rho_k)}\big(|\ovb *
\varrho_n|+ |\nabla_\a (\ovb \ast \varrho_n)|\big) \, dx_\a=0
\end{equation}
and set $v_n(x_\a,x_3):= (u \ast \varrho_n)(x_\a) + \e_n x_3
(\ovb\ast \varrho_n)(x_\a)$. Define the sequence
\begin{equation}\label{recbulk}
w_n(x_\a,x_3):= v_n(x_\a,x_3) + \varphi_n(x_\a,x_3) - \e_n x_3
\frac{d\ovb}{d\LL^2}(x_0).
\end{equation}
It results from (\ref{varphih}), (\ref{varphiconvergences}),
(\ref{mollifier}), (\ref{recbulk}) and \cite[Theorem 2.2]{AFP} that
$$\left\{
\begin{array}{l}
w_n \to u  \hbox{ in } L^1(Q'(x_0,\rho_k)\times I;\Rb^3),\\[0.2cm]
\frac{1}{\e_n}\int_I  \nabla_3 w_n(\cdot,x_3)\, dx_3
\xrightharpoonup[]{*} \ovb \hbox{ in }  {\cal
M}(Q'(x_0,\rho_k);\Rb^3).
\end{array}\right.$$
Hence, taking $\{w_n\}$ as test function we get that
\begin{eqnarray*}
J_{\{\e_n\}}(u,\ovb,Q'(x_0,\rho_k)) &\leq & \liminf_{n\to
+\infty}\int_{Q'(x_0,\rho_k)\times I}W\left(\nabla_\alpha w_n\Big|
\frac{1}{\e_n} \nabla_3 w_n\right)dx\\
\\
&=&\liminf_{n \to +\infty}\int_{Q'(x_0,\rho_k)\times
I}W\left(\nabla_\a v_n+\nabla_\a \varphi_n\Big|\frac{1}{\e_n}
\nabla_3 v_n+\frac{1}{\e_n} \nabla_3
\varphi_n-\frac{d\ovb}{d\LL^2}(x_0)\right)dx
\end{eqnarray*}
and using the Lipschitz property (\ref{Wlip}) of $W$ together with
(\ref{varphih}), it follows that
\begin{eqnarray}\label{J-}
J_{\{\e_n\}}(u,\ovb,Q'(x_0,\rho_k)) &\leq &\liminf_{n \to
+\infty}\int_{Q'(x_0,\rho_k)\times I}W\left(\nabla_\a u(x_0)+
\nabla_\a \varphi \left(\frac{x_\a}{\lambda\e_n},x_3\right)\Big|
\lambda \nabla_3 \varphi\left(\frac{x_\a}{\lambda\e_n},x_3\right)\right)dx\nonumber\\
\nonumber\\
&&+ L\limsup_{n \to +\infty}\int_{Q'(x_0,\rho_k) \times
I}|\nabla_\alpha v_n -\nabla_\alpha u(x_0)|\, dx\nonumber\\
&&+ L\limsup_{n \to +\infty} \int_{Q'(x_0,\rho_k) \times
I}\left|\frac{1}{\e_n} \nabla_3 v_n -
\frac{d\ovb}{d\LL^2}(x_0)\right|\, dx.
\end{eqnarray}
Observe that $\nabla_\a v_n(x_\a,x_3)=(\nabla_\a u *
\varrho_n)(x_\a) + (D_\a^s u * \varrho_n)(x_\a) + \e_n x_3
\nabla_\alpha (\ovb \ast \varrho_n)(x_\a)$ hence,
\begin{eqnarray*}
\int_{Q'(x_0,\rho_k) \times I}|\nabla_\alpha v_n -\nabla_\alpha
u(x_0)|\, dx &  \leq & \int_{Q'(x_0,\rho_k)}|\nabla_\a u * \varrho_n
-\nabla_\alpha u(x_0)|\, dx_\a\\
&& + \int_{Q'(x_0,\rho_k)} \big( |D_\a^s u
* \varrho_n| + \e_n |\nabla_\a (\ovb \ast \varrho_n)| \big) \,
dx_\a.
\end{eqnarray*}
Thus, according to (\ref{mollifier}), \cite[Theorem 2.2]{AFP}, the
fact that $\nabla_\a u * \varrho_n \to \nabla_\a u$ in $L^1_{\rm
loc}(\o;\Rb^{3 \times 2})$ and that $|D^s_\a u|(\partial
Q'(x_0,\rho_k))=0$ for each $k \in \Nb$, we get that
\begin{eqnarray}\label{J-2}
\limsup_{n \to +\infty}\int_{Q'(x_0,\rho_k) \times I}|\nabla_\a v_n
-\nabla_\a u(x_0)|\, dx & \leq & \int_{Q'(x_0,\rho_k)}|\nabla_\a
u(x_\a) -\nabla_\a u(x_0)|\,
dx_\a\nonumber\\
&&+|D_\a^s u|(Q'(x_0,\rho_k)).
\end{eqnarray}

Similarly, since $(1/\e_n) \nabla_3 v_n = \ovb * \varrho_n$, it
implies that
\begin{eqnarray*}\int_{Q'(x_0,\rho_k) \times
I}\left|\frac{1}{\e_n} \nabla_3
v_n-\frac{d\ovb}{d\LL^2}(x_0)\right|\, dx & \leq &
\int_{Q'(x_0,\rho_k)}\left|\left(\frac{d\ovb}{d\LL^2} *
\varrho_n\right)(x_\a)-\frac{d\ovb}{d\LL^2}(x_0) \right|
dx_\a\\
&& + \int_{Q'(x_0,\rho_k)}|(\ovb - \ovb^a )* \varrho_n|(x_\a)\,
dx_\a.\end{eqnarray*} Since $|\ovb- \ovb^a|(\partial
Q'(x_0,\rho_k))=0$ for each $k \in \Nb$ and $\frac{d\ovb}{d\LL^2}
* \varrho_n \to \frac{d\ovb}{d\LL^2}$ in $L^1_{\rm loc}(\o;\Rb^3)$, it yields
\begin{eqnarray}\label{J-3}\limsup_{n \to +\infty}\int_{Q'(x_0,\rho_k) \times
I}\left|\frac{1}{\e_n} \nabla_3
v_n-\frac{d\ovb}{d\LL^2}(x_0)\right|\, dx & \leq &
\int_{Q'(x_0,\rho_k)}\left|\frac{d\ovb}{d\LL^2}(x_\a)-\frac{d\ovb}{d\LL^2}(x_0)\right|\, dx_\a\nonumber\\
&& + |\ovb - \ovb^a|(Q'(x_0,\rho_k)).\end{eqnarray} Gathering
(\ref{J-}), (\ref{J-2}) and (\ref{J-3}) and using the
Riemann-Lebesgue Lemma, we get that
\begin{eqnarray*}
J_{\{\e_n\}}(u,\ovb,Q'(x_0,\rho_k)) & \leq & \rho_k^2 \mathcal Q^*
W\left(\nabla_\a u(x_0)\Big|\frac{d\ovb}{d\LL^2}(x_0)\right)
+ \rho_k^2 \eta \\
&&+ L |D_\a^s u|(Q'(x_0,\rho_k))+ L |\ovb - \ovb^a|(Q'(x_0,\rho_k))\\
&& +L \int_{Q'(x_0,\rho_k)}|\nabla_\a u(x_\a) -\nabla_\a u(x_0)|\,
dx_\a\\
&& + L \int_{Q'(x_0,\rho_k)}\left|\frac{d\ovb}{d\LL^2}(x_\a) -
\frac{d\ovb}{d\LL^2}(x_0)\right| dx_\a.
\end{eqnarray*}
Now dividing the previous inequality by $\rho_k^2$, sending $k \to
+\infty$ and exploiting properties (\ref{abs1}) and (\ref{abs2}) of
the point $x_0$, it leads to
$$\frac{dJ_{\{\e_n\}}(u,\ovb,\cdot)}{d\LL^2}(x_0) \leq
\mathcal Q^* W\left(\nabla_\a
u(x_0)\Big|\frac{d\ovb}{d\LL^2}(x_0)\right) + \eta$$ and the
arbitrariness of $\eta$ gives the desired claim.

\vskip5pt

\noindent {\bf Proof of (\ref{Jjc}).} The proof develops in the same
spirit of that in \cite[Proposition 5.49]{AFP} (see also
\cite{ADM}). Let us introduce an auxiliary function $f: \Rb^{3
\times 3} \to [0,+\infty)$ defined by
$$f(\xi):=\sup_{t>0}\frac{\mathcal Q^* W(t\xi) - \mathcal
Q^*W(0)}{t}.$$ It turns out that $f$ is a positively $1$-homogeneous
continuous function. Moreover, by (\ref{Wlip}) there exists $L>0$
such that
\begin{equation}\label{growthlipfj}
f(\xi) \leq L |\xi| \quad\text{ and }\quad |f(\xi)-f(\xi')| \leq
L|\xi - \xi'|\quad \text{ for every }\xi, \, \xi' \in \Rb^{3 \times
3}.
\end{equation}
Using the growth properties of differential quotients of convex
functions, it is easily seen from Proposition \ref{convex} that if
$z$, $b \in \Rb^3$ and $\nu \in \mathbb S^1$, then $f(z \otimes
\nu|b)=(\mathcal Q^* W)^\infty(z \otimes \nu|b)$.

Fix a standard sequence of mollifiers $\{\varrho_j\}$. Then by
\cite[Theorem 2.2]{AFP}, we have that $(u*\varrho_j,\ovb *
\varrho_j) \in W^{1,1}(\o;\Rb^3) \times L^1(\o;\Rb^3)$, $u*\varrho_j
\to u$ in $L^1_{\rm loc}(\o;\Rb^3)$ and $\ovb * \varrho_j
\xrightharpoonup[]{*} \ovb$ in $\M_{\rm loc}(\o;\Rb^3)$.

Using the Besicovitch Decomposition Theorem we can write $(D_\a
u|\ovb) = (D_\a^s u|\ovb^s) + \lambda^s$ for some singular measure
$\lambda^s \in \M(\o;\Rb^{3 \times 3})$ with respect to $|D_\a^s
u|$. Consider $x_0 \in \o$ satisfying
\begin{equation}\label{mesetranj}
\frac{d\lambda^s}{d|D_\a^s u|}(x_0)= \frac{d\LL^2}{d|D_\a^s
u|}(x_0)=0,
\end{equation}
such that
\begin{equation}\label{mesjump}
\frac{dD_\a^s u}{d|D_\a^s u|}(x_0)=\frac{dD_\a u}{d|D_\a^s u|}(x_0)
\text{ is a rank one matrix}, \quad \frac{d\ovb^s}{d|D_\a^s
u|}(x_0)=\frac{d\ovb}{d|D_\a^s u|}(x_0).
\end{equation}
Assume further that $x_0$ is a  Lebesgue point of
\begin{equation}\label{lpj}f\left(\frac{dD_\a u}{d|D_\a^s
u|}\Big|\frac{d\ovb}{d|D_\a^s u|}\right)\end{equation} with respect
to $|D_\a^s u|$ and that
\begin{equation}\label{mesjump2}
\frac{dJ_{\{\e_n\}}(u,\ovb,\cdot)}{d|D_\a^s u|}(x_0)
\end{equation}
exists and is finite. Note that by Alberti's Rank One Theorem
\cite{A}, $|D_\a^s u|$ almost every points $x_0\in \o$ satisfy these
properties. Let $\{\rho_k\} \searrow 0^+$ be such that $|D^s_\a
u|(\partial Q'(x_0,\rho_k)) = |\lambda^s|(\partial
Q'(x_0,\rho_k))=0$ for every $k\in \Nb$.

By Remark \ref{sobolevcase} together with the sequential lower
semicontinuity of $J_{\{\e_n\}}$, we get that
\begin{eqnarray*}J_{\{\e_n\}}(u,\ovb,Q'(x_0,\rho_k)) & \leq & \liminf_{j \to +\infty}
J_{\{\e_n\}}(u*\varrho_j,\ovb*\varrho_j,Q'(x_0,\rho_k))\\
& = & \liminf_{j \to +\infty} \int_{Q'(x_0,\rho_k)} \mathcal Q^*W
\big(\nabla_\a (u*\varrho_j)|\ovb * \varrho_j \big)\,
dx_\a\\
& = & \liminf_{j \to +\infty} \int_{Q'(x_0,\rho_k)} \mathcal Q^*W
\big((D_\a u|\ovb) * \varrho_j \big)\, dx_\a,\end{eqnarray*} where
we used the fact that $\nabla_\a (u*\varrho_j) = (D_\a
u)*\varrho_j$. By definition of $f$, it follows that
$$J_{\{\e_n\}}(u,\ovb,Q'(x_0,\rho_k)) \leq \liminf_{j \to +\infty} \int_{Q'(x_0,\rho_k)}
f\big((D_\a u|\ovb) * \varrho_j \big)\, dx_\a + \mathcal Q^*
W(0)\rho_k^2$$ and using its Lipschitz property (\ref{growthlipfj}),
we get that
\begin{eqnarray*}
J_{\{\e_n\}}(u,\ovb,Q'(x_0,\rho_k)) & \leq & \liminf_{j \to +\infty}
\int_{Q'(x_0,\rho_k)} f\big((D_\a^s u|\ovb^s) * \varrho_j
\big)\, dx_\a + \mathcal Q^* W(0)\rho_k^2\\
&&+ L \limsup_{j \to +\infty}\int_{Q'(x_0,\rho_k)}
|\lambda^s*\varrho_j|\, dx_\a.
\end{eqnarray*}
Since $|\lambda^s|(\partial Q'(x_0,\rho_k))=0$ for each $k \in \Nb$,
then \cite[Theorem 2.2]{AFP} implies that
\begin{eqnarray*}
J_{\{\e_n\}}(u,\ovb,Q'(x_0,\rho_k)) & \leq & \liminf_{j \to +\infty}
\int_{Q'(x_0,\rho_k)} f\big((D_\a^s u|\ovb^s) * \varrho_j
\big)\, dx_\a + \mathcal Q^* W(0)\rho_k^2\\
&&+ L |\lambda^s|(Q'(x_0,\rho_k)).
\end{eqnarray*}
As $(D_\a^s u|\ovb^s) * \varrho_j \xrightharpoonup[]{*}(D_\a^s
u|\ovb^s)$ in $\M_{\rm loc}(\o;\Rb^{3 \times 3})$ as $j \to
+\infty$, in particular we have that
$$(D_\a^s u|\ovb^s) * \varrho_j \xrightharpoonup[j \to
+\infty]{*}(D_\a^s u|\ovb^s) \text{ in }\M(Q'(x_0,\rho_k);\Rb^{3
\times 3}).$$ Moreover as $|D^s_\a u|(\partial Q'(x_0,\rho_k))=0$,
is follows from \cite[Theorem 2.2]{AFP} that
$$\int_{Q'(x_0,\rho_k)}|(D_\a^s u|\ovb^s)*\varrho_j| \, dx_\a \xrightarrow[j \to +\infty]{}
|(D_\a^s u|\ovb^s)|(Q'(x_0,\rho_k)).$$  Hence, applying Reshetnyak's
Continuity Theorem (see {\it e.g.} \cite[Theorem 2.39]{AFP}), we
infer that
\begin{eqnarray*}
J_{\{\e_n\}}(u,\ovb,Q'(x_0,\rho_k)) & \leq & \int_{Q'(x_0,\rho_k)}
f\left(\frac{dD^s_\a u}{d|D_\a^s
u|}\Big|\frac{d\ovb^s}{d|D^s_\a u|}\right)d|D^s_\a u| + \mathcal Q^* W(0)\rho_k^2\\
&&+ L |\lambda^s|(Q'(x_0,\rho_k)),
\end{eqnarray*}
where we used the fact that $f$ is positively $1$-homogeneous and
that $(D_\a^s u|\ovb^s)$ is absolutely continuous with respect to
$|D_\a^s u|$. Dividing the previous inequality by $|D^s_\a
u|(Q'(x_0,\rho_k))$, sending $k \to +\infty$ and using
(\ref{mesetranj}), (\ref{mesjump}), (\ref{lpj}) and
(\ref{mesjump2}), we deduce that
$$\frac{dJ_{\{\e_n\}}(u,\ovb,\cdot)}{d|D^s_\a u|}(x_0) \leq f\left(\frac{dD_\a
u}{d|D_\a^s u|}(x_0)\Big|\frac{d\ovb}{d|D^s_\a
u|}(x_0)\right)=(\mathcal Q^* W)^\infty\left(\frac{dD_\a u}{d|D_\a^s
u|}(x_0)\Big|\frac{d\ovb}{d|D^s_\a u|}(x_0)\right)$$ since
$\frac{dD_\a u}{d|D_\a^s u|}(x_0)$ is a rank one matrix.

\vskip5pt

\noindent {\bf Proof of (\ref{Js}).} The proof for estimating from
above the term concerning the singular part is analogous to the
previous one.

Using the Besicovitch Decomposition Theorem we can write $(D_\a
u|\ovb) = (0|\ovb^\sigma) + \lambda^\sigma$ for some singular
measure $\lambda^\sigma \in \M(\o;\Rb^{3 \times 3})$ with respect to
$|\ovb^\sigma|$. Consider $x_0 \in \o$ to be a Lebesgue point of
\begin{equation}\label{lp}
f\left(0\, \Big|\frac{d\ovb^\sigma}{d|\ovb^\sigma|}\right)
\end{equation} with respect to
$|\ovb^\sigma|$ satisfying
\begin{equation}\label{meset}
\frac{d|\ovb-\ovb^\sigma|}{d|\ovb^\sigma|}(x_0)=\frac{d\lambda^\sigma}{d|\ovb^\sigma|}(x_0)=
\frac{d\LL^2}{d|\ovb^\sigma|}(x_0)=0,
\end{equation}
and such that
\begin{equation}\label{messing}
\frac{dJ_{\{\e_n\}}(u,\ovb,\cdot)}{d|\ovb^\sigma|}(x_0)
\end{equation}
exists and is finite. Note that $|\ovb^\sigma|$ almost every points
$x_0\in \o$ satisfy these properties. Let $\{\rho_k\} \searrow 0^+$
be such that $|\ovb^\sigma|(\partial Q'(x_0,\rho_k)) =
|\lambda^\sigma|(\partial Q'(x_0,\rho_k))=0$ for every $k\in \Nb$.

Arguing exactly as in the previous subsection, we obtain that
\begin{eqnarray*}
J_{\{\e_n\}}(u,\ovb,Q'(x_0,\rho_k)) & \leq & \int_{Q'(x_0,\rho_k)}
f\left(0\, \Big|\frac{d\ovb^\sigma}{d|\ovb^\sigma|}\right)d|\ovb^\sigma| + \mathcal Q^* W(0)\rho_k^2\\
&&+ L |\lambda^\sigma|(Q'(x_0,\rho_k)).
\end{eqnarray*}
Dividing the previous inequality by $|\ovb^\sigma|(Q'(x_0,\rho_k))$,
sending $k \to +\infty$ and using (\ref{lp}), (\ref{meset}) and
(\ref{messing}), it implies that
$$\frac{dJ_{\{\e_n\}}(u,\ovb,\cdot)}{d|\ovb^\sigma|}(x_0) \leq
f\left(0 \, \Big|\frac{d\ovb}{d|\ovb^\sigma|}(x_0)\right) =(\mathcal
Q^* W)^\infty\left(0\,
\Big|\frac{d\ovb}{d|\ovb^\sigma|}(x_0)\right).$$

\vskip10pt

\noindent {\bf Acknowledgements:} The authors wish to thank Irene
Fonseca for having drawn this problem to their attention. They also
gratefully acknowledge the anonymous referee for his careful reading
of the manuscript, and for his remarks which improved the original
version of the paper. J.-F.B. has been partially supported by the
MULTIMAT Marie Curie Research Training Network MRTN-CT-2004-505226
``Multi-scale modelling and characterisation for phase
transformations in advanced materials''. E.Z. aknowledges the
support of GNAMPA and of PRIN ``Modelli e algoritmi di
ottimizzazione per il progetto di reti wireless''.


\begin{thebibliography}{4}

\bibitem{A} {\sc G. Alberti}: Rank-one property for derivatives of functions with bounded variation,
{\it Proc. Royal Soc. Edinburgh Sect. A} {\bf 123} (1993) 239-274.

\bibitem{ADM}{\sc L. Ambrosio \& G. Dal Maso}: On the relaxation in $BV(\O;\mathbb R^m)$ of
quasi-convex integrals, {\it J. Funct. Anal.} {\bf 109} (1992)
76-97.

\bibitem{AFP}{\sc L. Ambrosio, N. Fusco \& D. Pallara}: {\it Functions of Bounded Variations
and Free Discontinuity Problems}, Oxford Mathematical Monograph,
Clarendon Press, Oxford (2000).

\bibitem{B0} {\sc J.-F. Babadjian}: Quasistatic evolution of a
brittle thin film, {\it Calc. Var. Part. Diff. Eq.} {\bf 26} (2006)
69-118.

\bibitem{B}{\sc J.-F. Babadjian}: Lower semicontinuity of quasiconvex bulk energies in SBV and integral
representation in dimension reduction, {\it SIAM J. Math. Anal.}
{\bf 39} (2008) 1921-1950.

\bibitem{BF}{\sc J.-F. Babadjian \& G. A. Francfort}: Spatial heterogeneity
in 3D-2D dimensional reduction, {\it ESAIM Cont. Opt. Calc. Var.}
{\bf 11} (2005) 139-160.

\bibitem{BouFonLeoMas}{\sc G. Bouchitt\'e, I. Fonseca, G. Leoni \& L. Mascarenhas}:
A global method for relaxation in $SBV^p$, {\it Arch. Rational Mech.
Anal.} {\bf 165} (2002) 187-242.

\bibitem{BouFonMas}{\sc G. Bouchitt\'e, I. Fonseca \& L.
Mascarenhas}: A global method for relaxation, {\it Arch. Rational
Mech. Anal.} {\bf 145} (1998) 51-98.

\bibitem{BFM}{\sc G. Bouchitt\'e, I. Fonseca \& L. Mascarenhas}: Bending moment in membrane theory,
{\it J. Elasticity} {\bf 73} (2004) 75-99.

\bibitem{Br}{\sc A. Braides}: {\it $\Gamma$-convergence for beginners}, Oxford
Lecture Series in Mathematics and its Applications,  Oxford
University Press, Oxford (2002).

\bibitem{BrF}{\sc A. Braides \& I. Fonseca}: Brittle thin films,
{\it Appl. Math. Optim.} {\bf 44} (2001) 299-323.

\bibitem{BFF} {\sc A. Braides, I. Fonseca \& G. A. Francfort}: 3D-2D
asymptotic analysis for inhomogeneous thin films, {\it Indiana Univ.
Math. J.} {\bf 49} (2000) 1367-1404.

\bibitem{DM}{\sc G. Dal Maso}: {\it An introduction to $\Gamma$-convergence}, Birkha\"user, Boston (1993).

\bibitem{FM0} {\sc I. Fonseca \& S. M\"uller}: Quasiconvex integrands
and lower semicontinuity in $L^1$, {\it SIAM J. Math. Anal.} {\bf
23} (1992) 1081-1098.

\bibitem{FM}{\sc I. Fonseca \& S. M\"uller}: Relaxation of quasiconvex functionals in
$BV(\O;\mathbb R^p)$ for integrands $f(x,u,\nabla u)$, {\it Arch.
Rational Mech. Anal.} {\bf 123} (1993) 1-49.

\bibitem{FM2}{\sc I. Fonseca \& S. M\"uller}: $\mathcal{A}$-quasiconvexity, lower semicontinuity, and Young measures,
{\it SIAM J. Math. Anal.} {\bf 30} (1999) 1355-1390.

\bibitem{FRS} {\sc D. Fox, A. Raoult \& J.C. Simo}: A justification of nonlinear properly invariant plate theories,
{\it Arch. Rational. Mech. Anal.} {\bf 25} (1992) 157-199.

\bibitem{GT} {\sc D. Gilbarg \& N. S. Trudinger}: {\it Elliptic
partial differential equations of second order}, Springer-Verlag,
Berlin-Heidelberg-New York (1977).

\bibitem{LR} {\sc H. Le Dret \& A. Raoult}: The nonlinear membrane model as
variational limit of nonlinear three-dimensional elasticity, {\it J.
Math. Pures Appl.} {\bf 74} (1995) 549-578.

\bibitem{M}{\sc P. Marcellini}: Approximation of quasiconvex functions and lower semicontinuity
of multiple integrals, {\it Manuscripta Math.} {\bf 51} (1985) 1-28.

\end{thebibliography}
\end{document}